\providecommand{\customgenericname}{}
\newcommand{\newcustomtheorem}[2]{%
  \newenvironment{#1}[1]
  {%
   \renewcommand\customgenericname{#2}%
   \renewcommand\theinnercustomgeneric{##1}%
   \innercustomgeneric
  }
  {\endinnercustomgeneric}
}
\newtheorem*{rep@theorem}{\rep@title}
\newcommand{\newreptheorem}[2]{%
\newenvironment{rep#1}[1]{%
 \def\rep@title{#2 \ref{##1}}%
 \begin{rep@theorem}}%
 {\end{rep@theorem}}}
\newcommand{\nocontentsline}[3]{}
\newcommand{\tocless}[2]{\bgroup\let\addcontentsline=\nocontentsline#1{#2}\egroup}
\numberwithin{equation}{section}
\newcommand{\N}{\mathbb{N}}
\newcommand{\R}{\mathbb{R}}
\newcommand{\E}{\mathbb{E}}
\newcommand{\J}{\mathfrak{J}}
\newcommand{\F}{\mathcal{F}}
\newcommand{\prob}{\mathbb{P}}
\newcommand{\tonde}[1]{\left({#1}\right)}
\newcommand{\quadre}[1]{\left[{#1}\right]}
\newcommand{\abs}[1]{\left\lvert{#1}\right\rvert}
\newcommand{\eps}{\varepsilon}
\newcommand{\innprod}[1]{\langle {#1} \rangle}
\theoremstyle{plain}
\newtheorem{thm}{Theorem}[section]
\newtheorem{lemma}[thm]{Lemma}
\newtheorem{prop}[thm]{Proposition}
\theoremstyle{definition}
\newtheorem{definition}{Definition}
\newtheorem{remark}{Remark}
\title{Coarse correlated equilibria in linear quadratic mean field games and application to an emission abatement game}
\date{\today}
\author{Luciano Campi\thanks{Department of Mathematics ``Federigo Enriques'', University of Milan, Via Saldini 50, 20133, Milan, Italy. E-mail address: \href{mailto:luciano.campi@unimi.it}{luciano.campi@unimi.it}} \and Federico Cannerozzi\thanks{Department of Mathematics ``Federigo Enriques'', University of Milan, Via Saldini 50, 20133, Milan, Italy. E-mail address: \href{mailto:federico.cannerozzi@unimi.it}{federico.cannerozzi@unimi.it}} \and Fanny Cartellier\thanks{Centre of Research in Economics and Statistics (CREST), CNRS, École polytechnique, GENES, ENSAE Paris, Institut Polytechnique de Paris, 91120 Palaiseau, France. E-mail address: \href{mailto:fanny.cartellier@ensae.fr}{fanny.cartellier@ensae.fr}.
} } 
\begin{document}

\maketitle

\begin{abstract}

Coarse correlated equilibria (CCE) are a good alternative to Nash equilibria (NE), as they arise more naturally as outcomes of learning algorithms and they may exhibit higher payoffs than NE.  
CCEs include a device which allows players' strategies to be correlated without any cooperation, only through information sent by a mediator. 
We develop a methodology to concretely compute mean field CCEs in a linear-quadratic mean field game framework. We compare their performance to mean field control solutions and mean field NE (usually named MFG solutions). 
Our approach is implemented in the mean field version of an emission abatement game between greenhouse gas emitters. In particular, we exhibit a simple and tractable class of mean field CCEs which allows to outperform very significantly the mean field NE payoff and abatement levels, bridging the gap between the mean field NE and the social optimum obtained by mean field control. 

\medskip \noindent
\textbf{Keywords:} Mean field games, coarse correlated equilibrium, mean field Nash equilibrium, mean field control, emissions' abatement game.\smallskip

\noindent
\textbf{2020 AMS subject classification:} 91A16, 49N80, 49N10, 91B76.

\end{abstract}

\section{Introduction}
Mean field games (MFGs) have been introduced in mid 2000s in \cite{lasry_lions} and independently in \cite{huang_malhame_caines}.
They arise as limit systems of large dynamic symmetric games with interactions of mean field type.
In the limit, the concept of Nash equilibrium translates into a fixed point problem in the space of flows of measures. This equilibrium concept is commonly defined as an MFG solution, for two main reasons.
On the one hand, approximate Nash equilibria with vanishing approximation error can be constructed starting from such an MFG solution (see, e.g., \cite{campi18absorption,car_del_probabilistic,lacker_leflem2022}).
On the other hand, Nash equilibria (NEs) for the $N$-player game can be shown to converge to such MFG solutions (see, e.g., \cite{lacker2020convergence,lacker_leflem2022}).
In this sense, MFG solutions can be considered as the infinitely many players analogue of Nash equilibria,
so that one can (and we will) refer to commonly called MFG solutions also as mean field Nash equilibria (mean field NE, for short).

Despite their popularity, Nash equilibria present some flaws. First, they raise numerical complexity issues, see for instance \cite{gilboa1989nash}. Second, it is well-known in game theory that agents are proved to actually behave according to a Nash equilibrium only under strong rationality assumptions. 
Finally, they can be highly inefficient compared to social optimum. 
As an alternative to Nash equilibria, correlated equilibria (CEs) and coarse correlated equilibria (CCEs) have been introduced in game theory literature. 
They can be understood as a generalization of the notion of Nash equilibrium by the introduction of a correlation device, which allows agents to adopt correlated strategies without any cooperation. While CEs were introduced by Aumann in 1974 \cite{aumann1974}, CCEs were introduced in \cite{hannan1957} and explicitly by \cite{moulin_vial1978} as a generalization of CEs. 
CCEs have been shown in game theory and computational literature to arise naturally from no-regret adaptive learning procedures (\cite{hart_mascolell_regret_based},\cite[Section 17.4]{roughgarden_2016}). 
Moreover, they are computationally ``easier'' as shown by \cite{gilboa1989nash}. 
Finally, they are shown to be able to outperform NE payoffs in standard game theory \cite{DokkaTrivikram2022Edia,moulin2014improvingNashbyCC} even in situations where correlated equilibria cannot, for instance in potential games \cite{neyman1997correlated}. For these reasons in this paper we focus on CCEs.

CCEs can be interpreted as follows in an $N$-player setting. A moderator, or correlation device, picks a strategy profile for the $N$ players randomly according to some publicly know distribution; then, she recommends it privately to the players.
Before the lottery is run, each player has to decide whether to commit to the moderator recommendation (whatever it will be), assuming that all other players commit, only knowing the lottery distribution.
If a player commits, then she is communicated in private her (and only her) selected strategy, and must follow it.
Instead, if a player deviates, she will do so without any information on the outcome of the lottery, assuming that all other players follow the private suggestion they receive.
A lottery is a CCE if every player prefers to commit rather than unilaterally deviate, assuming that all others do commit. 
CCEs are a generalization of Aumann's notion of correlated equilibria (see \cite{aumann1974,aumann1987}), since in the latter each player is asked to commit to moderator's lottery after having seen her suggested strategy.

Lately, correlated and coarse correlated equilibria have made their appearance in MFG literature. 
Bonesini et al. \cite{bonesini_thesis,bonesiniCE,campifischer2021} establish existence and convergence results for correlated equilibria in mean field games with discrete time and finite state and action spaces. 
A second group of papers by Muller et al. \cite{muller2022learningCE,muller2021learning} considers both CEs and CCEs in a similar setting. In addition, they provide an extensive discussion of learning algorithms for both types of equilibria in MFG. Lastly, in \cite{campi_cannerozzi_fischer2023coarse}, CCEs have been introduced in both continuous time stochastic differential games and mean field games. The notion of coarse correlated solution to the MFG is justified by proving an approximation result. An existence result is also proved, by means of a minimax theorem. Although its generality, this result is not constructive, and the question of how to construct coarse correlated solutions to MFG is left open. 

\medskip
This paper's goal is to develop a methodology for computing mean field CCEs, and to effectively compare them to mean field NEs and mean field control (MFC) solutions (see \cite{carmona2013control,carmona2017priceofanarchy} for an insightful discussion on the differences between such two notions and a quantitative comparison).
For this reason, we do not consider the $N$-player game, but we limit our analysis to the mean field game.
Since we search for explicit solutions, we restrict our analysis to linear-quadratic stochastic MFGs, working in a setting closely related to \cite{graber2016LinearQuadratic}.
Applying our methodology to a toy model, we show that mean field CCEs indeed allow to significantly outperform the mean field NE in terms of payoffs under identified conditions. 

We propose a notion of mean field CCE which is strongly inspired by the notion of coarse correlated solution to the MFG of \cite{campi_cannerozzi_fischer2023coarse}. 
As for a mean field NE, our notion of mean field CCEs is any suitable pair made of a strategy and a flow of moments, with the following important differences. The flow of moments can be stochastic, and the strategy can be correlated to the flow of moments even without the presence of a common noise, as it is the case in this paper. The way they are correlated is chosen by the moderator at the beginning of the game as part of the equilibrium. We call such pairs \emph{correlated flows}. In few words, any of such correlated flows is a mean field CCE if the representative player has no incentive to deviate before knowing the flow realisation, and if the flow is consistent, i.e., at any time $t$ the flow of moments equals the conditional expectation of the representative player's state given the whole flow of moments up to terminal time. 

\medskip
Our main contributions can be summarised as follows:
\begin{itemize}[label= --]
\item After focusing on a suitable class of suggested strategies and flows of moments verifying the consistency condition, we reduce the search of a mean field CCE to an inequality involving only the law of stochastic flow of moments at the equilibrium.
\item We compare the payoffs of mean field CCEs with those of mean field NEs and MFC solutions. We show that the MFC optimal payoff is the unattainable upper bound for all mean field CCEs and provide a condition on the law of the stochastic flow of moments so that mean field CCEs in a specific class yield a higher payoff than mean field NE.
\item Finally, we apply our results to an emission abatement game between countries, inspired by environmental economics literature on international environmental agreements  \cite{barrett1994self,DokkaTrivikram2022Edia}. 
We show that it is possible to build simple mean field CCEs that both yield much higher payoffs than the mean field NE and guarantee higher average abatement levels. 
\end{itemize}
The application also shows an additional interest of CCEs, which is to help a regulator not only to lead the population to a more optimal payoff than the free-riding NE, but also or otherwise to lead it to match other and potentially payoff-conflicting targets, such as the abatement level of players in this application. To the best of our knowledge, no attempt has been made so far to identify CCEs analytically in a mean field game, nor to explore and illustrate their potential in outperforming the payoffs of mean field Nash equilibria.

\medskip
The rest of the paper is organised as follows: in Section \ref{sec:setting} we state the assumptions, which will be in force throughout the whole paper, and give the definition of mean field CCE.
In Section \ref{sec:cce}, we develop the methodology for computing mean field CCEs, while in Section \ref{sec:comparison} the comparison between mean field CCEs, MFC solutions and mean field NEs is carried out.
In Section \ref{sec:AG}, we apply the results of the previous sections to the abatement game, and we analyse and explore the resulting characterization of the set of mean field CCEs which outperform the payoff of the unique mean field NE.
Finally, we collect in the \nameref{sec:appendix} the most standard proofs, which we choose to include for the sake of completeness.

\section{Setting}\label{sec:setting}
Let $T > 0$ be a fixed time horizon.
Let $d,k \in \N$.
For $n \in \N$, denote by $\mathcal{S}^n$ the set of $n \times n$ symmetric matrices and by $I_n$ the identity matrix in $\mathcal{S}^n$. We are going to work under the following set of assumptions.

\begin{customassumptions}{\textbf{A}}\label{LQ:assumptions}
Consider the following vector valued or matrix valued functions: 
\begin{enumerate}[label=(\arabic*),wide]
    \item $A,\sigma \in L^\infty([0,T];\R^{d \times d})$; 
    \item $B \in L^\infty([0,T];\R^{d \times k})$;
    \item $Q,\bar Q, \tilde Q \in L^\infty([0,T];\mathcal{S}^{d})$, $R \in \mathcal{C}([0,T];\mathcal{S}^k)$, $H, \bar H, \tilde H \in \mathcal{S}^d$; 
    \item $H, \bar H, \tilde H \geq 0$, $Q_t \geq d_1 I_{d}$ for every $t \in [0,T]$, $d_1 \geq 0$, $R_t \geq d_2 I_{k}$ for every $ t \in [0,T]$, $d_2 > 0$;
    \item $S \in L^\infty([0,T];\R^{k \times d})$, $\sup_{t \in [0,T]} \vert S_t \vert^2  < d_1d_2$ if $d_1 > 0$, $S_t = 0$ for every $t \in [0,T]$ otherwise;
    \item $L,q \in L^\infty([0,T];\R^{d})$, $r \in L^\infty([0,T];\R^{k})$.
\end{enumerate}
\end{customassumptions}

Let $(\Omega,\F,\mathbb{F},\prob)$ be a complete filtered probability space satisfying usual assumptions, let $W$ be a $d$ dimensional $\mathbb{F}$-Brownian motion and let $\xi$ be an $\R^d$-valued $\F_0$-measurable random variable with law $\nu$. 
Denote by $\nu_1$ and $\nu_2$ the first and second moments of $\nu$ respectively.
Suppose that $\xi$ and $W$ are independent.
Throughout the paper, we assume the following assumption:
\begin{customassumption}{\textbf{U}}\label{LQ:assumption_F_0}
The $\sigma$-algebra $\mathcal F_0$ is large enough to support a $\mathcal F_0$-measurable uniform random variable independent of $\xi$ and $W$.
\end{customassumption}
In the following, we denote by $\mathbb{F}^1=(\F^1_t)_{t \in [0,T]}$ the filtration generated by $\xi$ and $W$, which we assume without loss of generality to satisfy the usual conditions.

Given an arbitrary filtration $\mathbb G$, we will use the standard notation $\mathbb H^2(\mathbb G)$ for the set of all $\mathbb G$-progressively measurable $\mathbb R^k$-valued processes $\alpha=(\alpha_t)_{t\in [0,T]}$ such $\mathbb E[\int_0 ^T | \alpha_t |^2 dt]<\infty$.

\medskip
We introduce the notion of correlated flow and mean field coarse correlated equilibrium.
\begin{definition}[Correlated flow]\label{def_corr_flow}
A correlated flow is a pair $(\lambda,\mu)$ satisfying the following properties:
\begin{enumerate}[label=\roman*)]
    \item $\lambda=(\lambda_t)_{t \in [0,T]}$ is a process in $\mathbb H^2(\mathbb F)$.
    \item $\mu=(\mu_t)_{t \in [0,T]}$ is an $\F_0$-measurable $\mathcal{C}([0,T];\R^d)$-random variable.
    \item $\mu$ is independent of both $\xi$ and $W$.
\end{enumerate}
We refer to $\lambda$ as the \textit{recommended strategy} and to $\mu$ as the \textit{random flow of moments}. 
\end{definition}

We can interpret a correlated flow $(\lambda,\mu)$ as follows: moderator's lottery is run before the game starts and independently of the idiosyncratic shocks that determine the random evolution of representative player's state. This is made possible by Assumption \ref{LQ:assumption_F_0}, which allows for some independent extra randomness. We stress that, while the recommended strategy $\lambda$ is correlated both to $\xi$ and $W$ and to $\mu$, $\mu$ is independent of the initial datum and the noise. We will sometimes use the equivalent expressions ``correlated strategy'' or ``suggested strategy'' to refer to $\lambda$.

\medskip
Let us consider a correlated flow $(\lambda,\mu)$. We now assign dynamics and payoff functional.
We consider a state variable with linear dynamics given by
\begin{equation}\label{LQ_general:dynamics}
dX_t = (A_t X_t + B_t \lambda_t)dt + \sigma_tdW_t, \quad X_0=\xi,
\end{equation}
and a linear-quadratic payoff functional
\begin{equation}\label{LQ_general:payoff}
\begin{aligned}
\J & (\lambda,\mu)= \E \bigg[ \int_0^T \Big( \big( \innprod{L_t,\mu_t} -\frac{1}{2}\innprod{\bar Q_t \mu_t, \mu_t }\big) - \big(  \frac{1}{2}\innprod{ Q_t X_t, X_t }  + \innprod{\Tilde{Q}_t X_t, \mu_t } +\innprod{q_t,X_t} + \frac{1}{2}\innprod{R_t\lambda_t,\lambda_t}  \\
& + \innprod{S_t X_t, \lambda_t } + \innprod{r_t,\lambda_t}  \big) \Big) dt - \frac{1}{2}\innprod{ \bar H \mu_T, \mu_T } - \big( \frac{1}{2}\innprod{ H X_T, X_T } + \innprod{\Tilde{H} X_T, \mu_T } \big) \bigg].
\end{aligned}
\end{equation}
When needed, we will stress the dependence of the process $X$ on the control $\lambda$ by using the notation $X^{\lambda}$.

Now, in order to move to the definition of mean field CCE, two cases must be distinguished.
If the representative player decides to trust the mediator and therefore accepts to follow her recommendation $\lambda$ before knowing it, the dynamics is given by equation \eqref{LQ_general:dynamics}, and the player gets the reward $\J(\lambda,\mu)$.
If instead she decides to deviate, she uses a strategy $\beta\in \mathbb H^2(\mathbb F ^1)$, her state dynamics is given by equation \eqref{LQ_general:dynamics} with $\beta$ instead of $\lambda$, and her reward is $\J(\beta,\mu)$.
Observe that when she deviates, her strategy $\beta$ is measurable only with respect to the initial datum and the idiosyncratic noise, since she has no information on the outcome of the moderator's lottery. The deviating player can only use her knowledge of the law of the correlated flow $(\lambda,\mu)$, which is assumed to be publicly known. As a consequence, when deviating, the state process $X$ of the representative player is independent of the random flow of moments $\mu$, which, however, still appears in her payoff.

\begin{definition}[Mean field coarse correlated equilibrium]\label{def_cce}
A correlated flow $(\lambda,\mu)$ is a mean field CCE  if the following holds:
\begin{enumerate}[label=(\roman*)]
    \item Optimality: for every deviation $\beta \in \mathbb H^2(\mathbb F ^1)$, it holds
    \begin{equation}\label{LQ:def_cce:opt}
        \J(\lambda,\mu) \geq \J(\beta,\mu).
    \end{equation}
    \item Consistency: let $X=(X_t)_{t \in [0,T]}$ be the solution to equation \eqref{LQ_general:dynamics} with the control process $\lambda$. For every time $t \in [0,T]$, $\mu_t$ is a version of the conditional expectation of $X_t$ given $\mu$, that is,
    \begin{equation}\label{LQ:def_cce:cons}
        \mu_t=\E[X_t \vert \mu ] \quad \prob\text{-a.s.} \;\; \forall t \in [0,T].
    \end{equation}
\end{enumerate}
\end{definition}
The definition of mean field CCE has two fundamental differences with the usual definition of mean field NE.
First of all, as already mentioned, the optimality condition features an asymmetry between the suggested strategy, which belongs to $\mathbb H^2(\mathbb F)$, and deviating player's strategies, which belong to the smaller class $\mathbb H^2(\mathbb F^1)$, since the former depends also on the information used by the moderator to run her lottery while the latter does not.
As for the consistency condition, we notice that, coherently with $\mu$ being stochastic, it is formulated in terms of conditional expectations, although no common noise is present.
It should be interpreted in the following way: if all players commit to the mediator's lottery outcomes before knowing them, then the flow of measures should arise from aggregation of the individual behaviors.
In the mean field limit, the influence of the idiosyncratic noise on the flow of moments vanishes, while the influence of moderator's lottery does not.
Therefore, $\mu$ stays stochastic and its stochasticity should derive from moderator's lottery only.
We refer to \cite{campi_cannerozzi_fischer2023coarse} for more considerations and a deeper analysis of the connection with the $N$-player game.

\begin{remark}\label{LQ_general:rmk:dynamics_no_moments}
The reader might have noticed that $\mu$ does not appear in the state dynamics \eqref{LQ_general:dynamics}. While computing mean field NEs and MFC solutions in the linear-quadratic case with the flow of moments in the dynamics is standard, computing mean field CCEs can be more delicate when $\mu$ appears in the state dynamics. We refer to Section \ref{sec:deviating_player} and to Remark \ref{LQ_general:rmk:filtering} therein for more explanations.
\end{remark}

\begin{remark}
In \cite{campi_cannerozzi_fischer2023coarse}, moderator's lottery was modeled in the following way: an auxiliary probability space was chosen by the moderator to support the extra randomness for her lottery.
As a consequence, the recommended strategy, dynamics and payoff were naturally defined on a suitable product space supporting $\xi, W$ and such extra randomness.
Here, thanks to Assumption \ref{LQ:assumption_F_0}, the given filtration $\mathbb F$ is already big enough to allow for any extra randomization the moderator might want to use. In both formulations moderator's lottery is run independently of $\xi$ and $W$ and deviations are measurable with respect to $\xi$ and $W$ only.
Moreover, while \cite{campi_cannerozzi_fischer2023coarse} considers a stochastic flow of measures, and the consistency condition is given in terms of conditional probabilities, here it is enough to consider a flow of moments and conditional expectations, due to the linear-quadratic structure of the MFG.
\end{remark}

\section{Computing mean field coarse correlated equilibria}\label{sec:cce}

The set of coarse correlated equilibria is typically very wide and it is difficult to characterize in a continuous time setting. We therefore focus on a tractable class of correlated flows for which we are able to characterize a sufficient condition for being a mean field CCE. To do so, we adopt the following procedure:
\begin{itemize}
    \item We fix a correlated flow $(\lambda,\mu)$.
    We suppose that the representative player does not commit to the moderator's lottery and we compute her best deviating strategy $\hat{\beta}$, i.e.
    \[
    \hat{\beta} = \text{arg}\max_{\beta \in \mathbb H^2(\mathbb F^1)}\J(\beta,\mu).
    \]
    This is the content of Proposition \ref{LQ_general:prop_best_dev}. Observe that $\hat{\beta}$ will depend upon the law of $(\lambda,\mu)$ itself, but not on its actual realization.
    \item We define a parameterised class of correlated flows $(\lambda,\mu)$ of similar shape as the best deviating strategy $\hat{\beta}$ so that the consistency condition \eqref{LQ:def_cce:cons} is fulfilled.
    The correlation is due to a suitable random parameter $\delta$. 
    This is accomplished in subsection \ref{sec:corr_flow_defined}. 
    \item Finally, for $(\lambda,\mu)$ in such a class, with corresponding parameter $\delta$, we express the optimality condition
    \[
    \J(\lambda,\mu) \geq \J(\hat{\beta},\mu)
    \]
    as an inequality involving the law of $\mu$ and $\delta$ only.
    Such an inequality is established in Theorem \ref{LQ:prop:optimality_condition}.
\end{itemize}
As a result, we reduce the search for a mean field CCE to finding a law for $\mu$ and $\delta$ that verifies an optimality inequality.
The choice of focusing on a class of correlated flows with shape similar to the best deviation allows for explicit analytical comparison between the two payoffs in the optimality condition \eqref{LQ:def_cce:opt}.

\begin{remark}
Interestingly, the outlined procedure does not involve the usual two steps procedure used to compute mean field NEs: first, optimize with a fixed flow of moments and, second, perform a fixed point argument to determine the flow. Indeed, we first impose the consistency condition and then we verify the optimality condition, more in line with an MFC fashion.
This sheds light on one important feature of mean field CCEs: they can be regarded as a middle ground between mean field NE and MFC solutions.
The comparison will be carried out in Section \ref{sec:comparison}, and in Section \ref{sec:AG} through the study of a simple yet important example.
\end{remark}

\subsection{Deviating player's optimization problem}\label{sec:deviating_player}
Suppose that the representative player does not commit to the lottery. Therefore, as anticipated in Section \ref{sec:setting}, she chooses a strategy on her own before the moderator sends his recommendation, hence in particular without any information on the realisation of the correlated flow. The only information she has about $(\lambda,\mu)$ is the joint law of the pair itself, which is assumed to be publicly known.
Due to the linear-quadratic structure of the MFG and the fact that any admissible deviation $\beta$ is independent of $\mu$, it turns out that knowing the expectation of $\mu_t$ for all $t\in [0,T]$ is enough.

Since the term $\int_0^T(\innprod{L_t,\mu_t} -\frac{1}{2}\innprod{\bar Q_t \mu_t, \mu_t })dt - \innprod{\bar H \mu_T, \mu_T }$ in \eqref{LQ_general:payoff} can be viewed as an uncontrolled constant for the deviating player's optimization problem, we can focus on the equivalent optimization problem
\begin{equation*}
\begin{aligned}
    \min_{\beta \in \mathbb H^2 (\mathbb F^1)} \J'(\beta,\mu),
\end{aligned}
\end{equation*}
where
\begin{equation}\label{LQ_general:cost_functional}
\begin{aligned}
\J' & (\beta,\mu) = \E \bigg[ \int_0^T \Big( \frac{1}{2}\innprod{ Q_t X_t, X_t }  + \innprod{\Tilde{Q}_t X_t, \mu_t } + \innprod{q_t,X_t} + \frac{1}{2}\innprod{R_t\beta_t,\beta_t} + \innprod{S_t X_t, \beta_t } + \innprod{r_t,\beta_t}   \Big) dt \\
& + \frac{1}{2}\innprod{ H X_T, X_T } + \innprod{\Tilde{H} X_T, \mu_T } \bigg]
\end{aligned}
\end{equation}
under the constraint
\begin{equation}\label{LQ_general:dev_player_dynamics}
dX_t = (A_t X_t + B_t \beta_t)dt + \sigma_t dW_t, \quad X_0=\xi.
\end{equation}
Since $\beta \in \mathbb H^2(\mathbb F^1)$, it follows that $X$ is $\mathbb{F}^1$-adapted, and therefore is independent of the flow of moments $\mu$, which implies that deviating player's payoff can be written as:
\begin{equation}\label{LQ:dev_player:only_cost_terms}
\begin{aligned}
\J' & (\beta,\mu) = \int_0^T \E \bigg[ \E \Big[ \frac{1}{2}\innprod{ Q_t X_t, X_t }  + \innprod{\Tilde{Q}_t X_t, \mu_t } +\innprod{q_t,X_t} +\frac{1}{2} \innprod{R_t\beta_t,\beta_t} + \innprod{S_t X_t, \beta_t } \\
& + \innprod{r_t,\beta_t} \; \bigg \vert \; \F^1_t \Big] \bigg] dt +  \E \bigg[ \E \quadre{\frac{1}{2}\innprod{H X_T, X_T } + \innprod{\Tilde{H} X_T, \mu_T } \; \Big \vert \; \F^1_T }\bigg] \\
= & \E \bigg[ \int_0^T \Big( \frac{1}{2}\innprod{ Q_t X_t, X_t }  + \innprod{\Tilde{Q}_t \E[\mu_t] + q_t, X_t } +\frac{1}{2} \innprod{R_t\beta_t,\beta_t} + \innprod{S_t X_t, \beta_t } + \innprod{r_t,\beta_t} \Big) dt \\
& + \frac{1}{2}\innprod{HX_T,X_T} +\innprod{\Tilde{H} \E[\mu_T], X_T } \bigg].
\end{aligned}
\end{equation}
This is now a standard linear quadratic control problem, which can be solved by the stochastic maximum principle.
\begin{prop}[Optimal strategy for the deviating player]\label{LQ_general:prop_best_dev}
Let $\phi$, $\psi$ and $\theta$ be the solutions of the following ODEs:
\begin{equation}\label{LQ:riccati_eq}
\left\{ \begin{aligned}
    & \dot\phi_t + \phi_t A_t + A_t^\top \phi_t  + Q_t -( \phi_t B_t + S_t^\top ) R_t^{-1}(B_t^\top\phi_t  + S_t) = 0, && \phi_T = H, \\
    & \dot\psi_t + A_t^\top \psi_t + \Tilde{Q}_t - (\phi_t B_t +S^\top ) R_t^{-1}B_t^\top\psi_t = 0, && \psi_T = \Tilde{H}, \\
    & \dot\theta_t + \psi_t\frac{d\E[\mu_t]}{dt} + A_t^\top\theta_t + q_t - (\phi_t B_t + S^\top )R_t^{-1}(B_t^\top\theta_t + r_t) = 0, && \theta_T = 0. \\
\end{aligned} \right.
\end{equation}
There exists a unique optimal strategy for the deviating player, which is given by
\begin{equation}\label{LQ_general:best_deviation}
    \hat{\beta}_t = - R_t^{-1}((B_t^\top \phi_t  + S_t) X_t + B_t^\top \psi_t \E[\mu_t] + B_t^\top \theta_t + r_t ).
\end{equation}
\end{prop}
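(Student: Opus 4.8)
The plan is to apply the stochastic Pontryagin maximum principle to the reduced linear-quadratic control problem \eqref{LQ:dev_player:only_cost_terms}, which has deterministic coefficients and where the flow of moments enters only through the deterministic function $t \mapsto \E[\mu_t]$. First I would write down the Hamiltonian $\mathcal{H}(t,x,\beta,p) = \innprod{p, A_t x + B_t \beta} + \frac12\innprod{Q_t x, x} + \innprod{\Tilde{Q}_t \E[\mu_t] + q_t, x} + \frac12\innprod{R_t \beta,\beta} + \innprod{S_t x,\beta} + \innprod{r_t,\beta}$ and the associated adjoint (backward) equation $dP_t = -(\partial_x \mathcal{H})\,dt + Z_t\,dW_t$ with terminal condition $P_T = H X_T + \Tilde{H}\E[\mu_T]$, that is $dP_t = -(A_t^\top P_t + Q_t X_t + S_t^\top \beta_t + \Tilde{Q}_t\E[\mu_t] + q_t)\,dt + Z_t\,dW_t$. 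Since $R_t \geq d_2 I_k > 0$, the Hamiltonian is strictly convex in $\beta$, so the first-order condition $\partial_\beta \mathcal{H} = 0$ gives the candidate optimal control $\hat\beta_t = -R_t^{-1}(B_t^\top P_t + S_t X_t + r_t)$.

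Next I would make the standard linear ansatz $P_t = \phi_t X_t + \psi_t \E[\mu_t] + \theta_t$ for deterministic $\phi, \psi, \theta$ with $\phi$ symmetric, substitute into the forward dynamics \eqref{LQ_general:dev_player_dynamics} for $X$ (with $\beta = \hat\beta$) and into the backward adjoint equation, and apply Itô's formula to $\phi_t X_t$. Matching the $dW_t$ terms identifies $Z_t = \phi_t \sigma_t$, and matching the $dt$ terms, after collecting the coefficients of $X_t$, of $\E[\mu_t]$, and the remaining (purely time-dependent) terms separately, yields exactly the three ODEs in \eqref{LQ:riccati_eq}: the matrix Riccati equation for $\phi$, a linear ODE for $\psi$, and a linear ODE for $\theta$ driven by $\frac{d}{dt}\E[\mu_t]$ (which is where the $\psi_t \frac{d\E[\mu_t]}{dt}$ term enters, coming from differentiating $\psi_t\E[\mu_t]$). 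Substituting $P_t = \phi_t X_t + \psi_t\E[\mu_t] + \theta_t$ into the candidate control then gives precisely \eqref{LQ_general:best_deviation}.

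For rigor I would then: (i) argue that the Riccati equation \eqref{LQ:riccati_eq} admits a unique global solution on $[0,T]$ — here Assumption \ref{LQ:assumptions}(4)–(5) is essential, since $Q_t \geq d_1 I_d$, $R_t \geq d_2 I_k$ and $\sup_t|S_t|^2 < d_1 d_2$ (with $S \equiv 0$ when $d_1=0$) guarantee the standard coercivity/monotonicity condition ensuring non-blow-up, e.g. via the link to the value function of a well-posed LQ problem; the linear ODEs for $\psi$ and $\theta$ then have unique solutions automatically, noting $t\mapsto\E[\mu_t]$ is continuous (indeed $\mathcal{C}([0,T];\R^d)$-valued by Definition \ref{def_corr_flow}); (ii) verify admissibility $\hat\beta \in \mathbb H^2(\mathbb F^1)$, which follows since the coefficients are bounded, $\phi,\psi,\theta$ are continuous hence bounded, and $X^{\hat\beta}$ is the solution of a linear SDE with bounded coefficients and square-integrable initial datum, hence in $\mathbb H^2$; (iii) close the argument with a verification step — because $\mathcal H$ is convex in $(x,\beta)$ and the terminal cost is convex in $x$, the maximum principle is sufficient, so any admissible $\beta$ with $X^\beta$ and the associated adjoint satisfying the stated system is optimal, and strict convexity in $\beta$ gives uniqueness. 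The main obstacle is item (i), the global well-posedness of the Riccati ODE: one must carefully exploit the precise inequality $\sup_t |S_t|^2 < d_1 d_2$ to absorb the cross term $S_t^\top R_t^{-1} S_t$ and maintain a sign condition preventing finite-time blow-up; everything else is a routine, if lengthy, computation. Since the paper states it collects "the most standard proofs" in the appendix, I expect the actual proof to be deferred there, with the body merely recording the statement.
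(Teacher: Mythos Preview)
Your proposal is correct and follows essentially the same route as the paper's proof (given in the Appendix): apply the stochastic maximum principle to the reduced LQ problem \eqref{LQ:dev_player:only_cost_terms}, write the adjoint FBSDE, make the linear ansatz $Y_t=\phi_t X_t+\psi_t\E[\mu_t]+\theta_t$, and match coefficients to recover \eqref{LQ:riccati_eq} and \eqref{LQ_general:best_deviation}, with uniqueness coming from strict convexity under Assumptions~\ref{LQ:assumptions}(4)--(5) and global solvability of the Riccati equation handled by citing \cite[Chapter~6, Theorem~7.2]{yong_zhou_stochastic_controls}. The only cosmetic difference is ordering: the paper first establishes strict convexity of $\J'$ to secure existence and uniqueness of the minimizer, and then identifies it via the FBSDE ansatz, whereas you derive the candidate first and close with a verification step; both are standard and equivalent here.
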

We postpone the proof to the \nameref{sec:appendix}.
We observe only that the optimal control is actually feedback in the state $X_t$ and in the expectation $\E[\mu_t]$.
Moreover, while the functions $\phi$ and $\psi$ do not depend upon $\mu$ or its expectation, the flow of expectations $\E[\mu_t]$ appears in the equation for $\theta$, through its time derivative $\frac{d\E[\mu_t]}{dt}$.

\begin{remark}\label{LQ_general:rmk:filtering}
This first step towards calculating mean field CCEs requires a filtering procedure, since the deviating player does not observe the actual realisation of $\mu$. If the dynamics of the deviating player were dependent on $\mu$, this step would require a much more involved analysis. Indeed, the state process $X$ and $\mu$ would not be independent, even if $\beta \in \mathbb H^2(\mathbb F^1)$, which would lead to considering the projections on $\mathbb F^1$ of the processes $X^i$, $X^i X^j$ and $X^j \mu^i$, $1 \leq i,j \leq d$. This is why we have opted for a flow-free state dynamics, and postponed the analysis of the more general case to future research.
\end{remark}

\subsection{Correlated flow}\label{sec:corr_flow_defined}
We now consider a class of correlated flows $(\lambda,\mu)$ with a similar structure as the deviating player's best strategy $\hat \beta$ in \eqref{LQ_general:best_deviation}.
Our goal is to easily compare the payoff functionals $\J'(\lambda,\mu)$ and $\J'(\hat{\beta},\mu)$.
Hence we use the same functions $\phi$ and $\psi$, whereas we replace $\E[\mu_t]$ with $\mu_t$ itself and the term $R_t^{-1} (B_t^\top\theta_t + r_t)$ with a free parameter $\delta=(\delta_t)_{t \in [0,T]}$. Given any such $\delta$, we define $\mu$ so that the consistency condition \eqref{LQ:def_cce:cons} is satisfied, so that we will be left with taking care of the optimality condition only. 

More precisely, let $\mathcal G$ be the set of all correlated flows $(\lambda,\mu)$ defined as
\begin{equation}\label{LQ:corr_flow}
\begin{aligned}
    & \lambda_t = - R_t^{-1}((B_t^\top \phi_t  + S_t) X_t + B_t^\top \psi_t \mu_t  + \delta_t ), \\
    & \dot{\mu}_t = (A_t - B_tR^{-1}_t(B_t^\top \phi_t  + S_t + B_t^\top \psi_t ))\mu_t - B_t R_t^{-1} \delta_t , \quad \mu_0 = \nu_1,
\end{aligned}
\end{equation}
where $\delta =(\delta_t)_{t \in [0,T]}$ is any process in $\mathbb H^2(\mathcal F_0)$ independent of $\xi$ and $W$, and $\phi$ and $\psi$ are as in \eqref{LQ:riccati_eq}.
The parameter $\delta$ represents the extra source of randomness in the correlated flow with respect to $\xi$ and $W$.

\begin{lemma}\label{LQ:lemma_consistency}
Any correlated flow $(\lambda,\mu) \in \mathcal G$ satisfies the consistency condition \eqref{LQ:def_cce:cons}.
\end{lemma}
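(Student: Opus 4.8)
The plan is to observe that, once the feedback form of $\lambda$ in \eqref{LQ:corr_flow} is plugged into the state equation \eqref{LQ_general:dynamics}, the difference $X_t-\mu_t$ satisfies a closed linear equation driven only by $\xi$ and $W$, and has zero mean; consistency then follows by conditioning. Concretely, write $M_t:=A_t-B_tR_t^{-1}(B_t^\top\phi_t+S_t)$, which is bounded on $[0,T]$ since $A,B$ are bounded, $R_t\ge d_2I_k$ with $d_2>0$, and $\phi$ is continuous on $[0,T]$ by \eqref{LQ:riccati_eq}. Substituting $\lambda$ into \eqref{LQ_general:dynamics} gives the linear SDE
\[
dX_t=\tonde{M_tX_t-B_tR_t^{-1}B_t^\top\psi_t\mu_t-B_tR_t^{-1}\delta_t}dt+\sigma_tdW_t,\qquad X_0=\xi,
\]
whose coefficients are bounded and whose inhomogeneous term $-B R^{-1}B^\top\psi\mu-B R^{-1}\delta$ lies in $\mathbb H^2(\mathbb F)$ (here $\mu$ is the unique solution of the linear ODE in \eqref{LQ:corr_flow}, which belongs to $\mathbb H^2$ by variation of constants since $\delta\in\mathbb H^2(\mathcal F_0)$); hence $X$ is well defined.

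Next I would set $Y_t:=X_t-\mu_t$ and compute $dY_t=dX_t-\dot\mu_t\,dt$ using the ODE for $\mu$ in \eqref{LQ:corr_flow}. Writing $A_t-B_tR_t^{-1}(B_t^\top\phi_t+S_t+B_t^\top\psi_t)=M_t-B_tR_t^{-1}B_t^\top\psi_t$, one checks that the terms carrying $\delta_t$ cancel, and the terms carrying $\mu_t$ combine as $-B_tR_t^{-1}B_t^\top\psi_t\mu_t-(M_t-B_tR_t^{-1}B_t^\top\psi_t)\mu_t=-M_t\mu_t$, so that
\[
dY_t=M_tY_t\,dt+\sigma_t\,dW_t,\qquad Y_0=\xi-\nu_1 .
\]
In particular $Y$ is a measurable functional of $(\xi,W)$ alone, hence $\mathbb F^1$-adapted. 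On the other hand, $\mu$ is a deterministic affine functional of $\delta$ (with deterministic initial datum $\nu_1$), and $\delta$ is independent of $(\xi,W)$ by the definition of $\mathcal G$; therefore $Y$ is independent of $\mu$. Taking expectations in the last display yields $\tfrac{d}{dt}\E[Y_t]=M_t\E[Y_t]$ with $\E[Y_0]=\E[\xi]-\nu_1=0$, and uniqueness for this linear ODE gives $\E[Y_t]=0$ for all $t\in[0,T]$.

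Finally, conditioning on $\mu$ and using the independence of $Y_t$ and $\mu$,
\[
\E[X_t\mid\mu]=\E[Y_t\mid\mu]+\mu_t=\E[Y_t]+\mu_t=\mu_t\qquad\prob\text{-a.s.},\ \forall t\in[0,T],
\]
which is exactly the consistency condition \eqref{LQ:def_cce:cons}. The only steps requiring some care are the algebraic cancellation identifying $dY_t=M_tY_t\,dt+\sigma_t\,dW_t$ (which is where the particular shapes of $\lambda$ and of the ODE for $\mu$ in \eqref{LQ:corr_flow} are used), and the measurability argument ensuring $Y\indep\mu$; I expect the bookkeeping of the matrix terms in the drift of $Y$ to be the most error-prone part, but it is entirely routine.
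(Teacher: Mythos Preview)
Your proof is correct and follows essentially the same approach as the paper: both derive the closed linear equation $d(X_t-\mu_t)=(A_t-B_t\Phi_t)(X_t-\mu_t)\,dt+\sigma_t\,dW_t$ with zero-mean initial condition, and then use the independence of $(\xi,W)$ from $\delta$ (hence from $\mu$) to conclude $\E[X_t-\mu_t\mid\mu]=0$. The only cosmetic difference is that the paper conditions the SDE on $\mu$ directly, whereas you first observe that $Y=X-\mu$ is $\mathbb{F}^1$-adapted and hence independent of $\mu$, and then replace the conditional expectation by the unconditional one; these are equivalent formulations of the same argument.
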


\begin{proof}
Let $(\lambda, \mu) \in \mathcal G$ corresponding to some $\delta \in \mathbb H^2(\F_0)$ independent of $\xi$ and $W$.
To ease the notation, set
\begin{equation}\label{LQ:fb_coefficients}
\begin{aligned}
    & \Phi_t = R_t^{-1}(B_t^\top \phi_t  + S_t), && \Psi_t = R_t^{-1}B_t^\top \psi_t , &&& \Theta_t = R_t^{-1}(B_t^\top \theta_t + r_t).
\end{aligned}
\end{equation}
Notice that $\mu$ satisfies the measurability requests of Definition \ref{def_corr_flow}.
The dynamics of the representative player state is given by
\begin{equation}\label{LQ:lemma_consistency:dynamics}
\begin{aligned}
    & dX_t = \tonde{(A_t -B_t\Phi_t)X_t -B_t(\Psi_t \mu_t + R_t^{-1}\delta_t) }dt + \sigma_t dW_t, \\
    & X_0= \xi,
\end{aligned}
\end{equation}
which implies that the process $(\mu_t - X_t)_{t \in [0,T]}$ satisfies the stochastic differential equation
\begin{equation}\label{LQ:lemma_consistency:difference}
    d(\mu_t - X_t) = (A_t -B_t\Phi_t)(\mu_t - X_t)dt - \sigma_t dW_t,
\end{equation}
Since $\delta \in \mathbb H^2(\F_0)$, equation \eqref{LQ:lemma_consistency:dynamics} admits a unique continuous adapted solution $X$ satisfying $\E[\sup_{t \in [0,T]}\vert X_t \vert ^2]< \infty$.
Since $\xi$, $W$ and $\delta$ are independent by assumption, by taking the conditional expectation with respect to $\mu$ in \eqref{LQ:lemma_consistency:difference}, we get 
\begin{equation*}
    d\E[\mu_t - X_t \vert \mu ] = (A_t -B_t\Phi_t)\E[\mu_t - X_t \vert \mu ]dt, \quad \E[\mu_0 - X_0 \vert \mu ]= \mu_0 - \E[\xi]=0, \quad \prob\text{-a.s.},
\end{equation*}
which implies
$\E[\mu_t - X_t \vert \mu ] = 0$ $\prob$-a.s. for every $t$, i.e. \eqref{LQ:def_cce:cons}.
\end{proof}

\begin{remark}
Although the structure of the class $\mathcal G$ is simple and quite specific, we will see later in the application section (Section \ref{sec:AG}) that it is rich enough to contain a large set of mean field CCEs with some desirable properties, such as significantly outperforming the mean field NE.
\end{remark}

\subsection{Optimality condition}\label{sec:optimality_condition}
Let $(\lambda,\mu) \in \mathcal G$. Since consistency has already been verified in Lemma \ref{LQ:lemma_consistency}, the goal is now to restate the optimality condition \eqref{LQ:def_cce:opt} in terms of quantities dependent upon the law of $\mu$ and $\delta$ only.

\begin{thm}\label{LQ:prop:optimality_condition}
Let $(\lambda,\mu) \in \mathcal G$ corresponding to some $\delta \in \mathbb H^2(\F_0)$ independent of $\xi$ and $W$.
Let $\Phi$, $\Psi$ and $\Theta$ be given by in \eqref{LQ:fb_coefficients}.
Set
\begin{equation}\label{LQ:auxiliary_coefficients}
\begin{aligned}
    & M_t = Q_t + \Phi_t^\top R_t \Phi_t - 2\Phi_t^\top S_t, && N_t = \Tilde{Q}_t + \Psi_t^\top R_t \Phi_t -\Psi_t^\top S_t , &&& G_t = \Psi_t^\top R_t \Psi_t .
\end{aligned}
\end{equation}
Let $f(\mu)=(f_t(\mu))_{t \in [0,T]}$ be given by
\begin{equation}\label{LQ:auxiliary_f}
\left\{ \begin{aligned}
    & \dot{f}_t(\mu) =  (A_t - B_t \Phi_t )f_t(\mu) + B_t\tonde{ \Psi_t(\mu_t - \E[\mu_t]) + R_t^{-1}\delta_t - \Theta_t }, \quad 0 \leq t \leq T, \\
    & f_0(\mu) = 0.
\end{aligned} \right.
\end{equation}
Then, $(\lambda,\mu)$ is a mean field CCE if and only if the following condition is satisfied:
\begin{equation}\label{LQ:cce:final_optimality}
\begin{aligned}
    \int_0^T & \Big( \E[\innprod{N_t (\mu_t-\E[\mu_t]), \mu_t-\E[\mu_t]}]  + \frac{1}{2}(\E[\innprod{G_t \mu_t,\mu_t}] - \innprod{G_t\E[\mu_t],\E[\mu_t]}) \\
    & + \frac{1}{2} \E[\innprod{R_t^{-1}\delta_t ,\delta_t  }] -\frac{1}{2} \innprod{R_t\Theta_t,\Theta_t }  \Big) dt + \E[\innprod{\Tilde{H} (\mu_T-\E[\mu_T]), \mu_T-\E[\mu_T]}]  \\
    \leq &  \int_0^T \Big( \frac{1}{2}\left(\E[\innprod{M_t (\mu_t + f_t(\mu)),\mu_t + f_t(\mu)}] -\E[\innprod{M_t \mu_t,\mu_t}]\right) + \E[\innprod{N_t f_t(\mu), \E[\mu_t] }] \\
    & + \innprod{q_t - \Phi_t^\top r_t, \E[f_t(\mu)] }  + \innprod{B_t^\top(\phi_t + \psi_t)   \E[\mu_t] , \Theta_t}  -\E[\innprod{B_t^\top(\phi_t + \psi_t) \mu_t , R_t^{-1}\delta_t}] \\
    & + \E[\innprod{ B_t^\top\phi_t f_t(\mu) , \Theta_t }  ]   - \E[\innprod{r_t, \Theta_t - R_t^{-1}\delta_t  }] \Big) dt \\
    & + \frac{1}{2}(\E[\innprod{H (\mu_T + f_T(\mu)),\mu_T + f_T(\mu)}] -\E[\innprod{H \mu_T,\mu_T}])  + \innprod{\Tilde{H} \E[f_T(\mu)],\E[\mu_T]}.
\end{aligned}
\end{equation}
\end{thm}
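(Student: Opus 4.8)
The plan is to reduce the optimality requirement \eqref{LQ:def_cce:opt} to the inequality $\J'(\lambda,\mu)\le\J'(\hat\beta,\mu)$ between two explicit quadratic functionals, and then to expand both sides. Throughout, write $X^\lambda$ and $X^{\hat\beta}$ for the state processes driven respectively by the recommended strategy $\lambda$ of \eqref{LQ:corr_flow} and by the optimal deviation $\hat\beta$ of Proposition \ref{LQ_general:prop_best_dev}, and recall the notation $\Phi,\Psi,\Theta$ of \eqref{LQ:fb_coefficients}, so that $\lambda_t=-\Phi_tX^\lambda_t-\Psi_t\mu_t-R_t^{-1}\delta_t$ and $\hat\beta_t=-\Phi_tX^{\hat\beta}_t-\Psi_t\E[\mu_t]-\Theta_t$. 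Comparing \eqref{LQ_general:payoff} with \eqref{LQ_general:cost_functional}, for any admissible strategy $\gamma$ one has $\J(\gamma,\mu)=c(\mu)-\J'(\gamma,\mu)$ with $c(\mu):=\E\big[\int_0^T(\langle L_t,\mu_t\rangle-\tfrac12\langle\bar Q_t\mu_t,\mu_t\rangle)\,dt-\tfrac12\langle\bar H\mu_T,\mu_T\rangle\big]$ independent of $\gamma$; hence \eqref{LQ:def_cce:opt} is equivalent to $\J'(\lambda,\mu)\le\J'(\beta,\mu)$ for every $\beta\in\mathbb H^2(\mathbb F^1)$, and, since by Proposition \ref{LQ_general:prop_best_dev} the infimum over $\beta$ is attained at $\hat\beta$, to $\J'(\lambda,\mu)\le\J'(\hat\beta,\mu)$. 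As consistency holds on $\mathcal G$ by Lemma \ref{LQ:lemma_consistency}, it remains to show that this inequality is equivalent to \eqref{LQ:cce:final_optimality}.

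The key preliminary observation is a decomposition of the two state processes. Subtracting the linear equation for $X^{\hat\beta}$ — namely \eqref{LQ_general:dev_player_dynamics} with $\hat\beta$ in the feedback form above — from \eqref{LQ:lemma_consistency:dynamics}, the $\sigma_t\,dW_t$ terms cancel and the difference solves a linear ODE with zero initial datum which, compared with \eqref{LQ:auxiliary_f}, is exactly $-f(\mu)$; hence $X^\lambda_t=X^{\hat\beta}_t-f_t(\mu)$. On the other hand, by \eqref{LQ:lemma_consistency:difference} the process $Y_t:=\mu_t-X^\lambda_t$ solves $dY_t=(A_t-B_t\Phi_t)Y_t\,dt-\sigma_t\,dW_t$ with $Y_0=\nu_1-\xi$, so that $Y$ is $\mathbb F^1$-adapted, centred ($\E[Y_t]=0$, by the consistency identity $\E[X^\lambda_t\mid\mu]=\mu_t$ of Lemma \ref{LQ:lemma_consistency}), and independent of $(\mu,\delta)$, because $\mu$ — hence also $f(\mu)$ — is a deterministic functional of $\delta$ while $\delta\perp(\xi,W)$. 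Combining the two relations I get the representations
\[
X^\lambda_t=\mu_t-Y_t,\qquad X^{\hat\beta}_t=\mu_t+f_t(\mu)-Y_t,
\]
and in particular $\E[X^{\hat\beta}_t]=\E[\mu_t]+\E[f_t(\mu)]$.

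Next I would substitute the feedback form of $\lambda$ into \eqref{LQ_general:cost_functional} and, using that $R_t$ is symmetric, the definitions \eqref{LQ:auxiliary_coefficients} of $M,N,G$ and the identity $\Phi_t-R_t^{-1}S_t=R_t^{-1}B_t^\top\phi_t$, rewrite $\J'(\lambda,\mu)$ as the expectation of a quadratic form in $(X^\lambda_t,\mu_t,\delta_t)$ assembled from $M,N,G$; the analogous computation starting from the already simplified expression \eqref{LQ:dev_player:only_cost_terms} gives $\J'(\hat\beta,\mu)$ as the same quadratic form with $X^{\hat\beta}_t$, $\E[\mu_t]$, $\Theta_t$ in the roles of $X^\lambda_t$, $\mu_t$, $R_t^{-1}\delta_t$ (so that the $\tfrac12\langle R_t^{-1}\delta_t,\delta_t\rangle$ of the first functional becomes $\tfrac12\langle R_t\Theta_t,\Theta_t\rangle$ in the second). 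Inserting the decompositions of the previous step and forming $\J'(\lambda,\mu)-\J'(\hat\beta,\mu)$, all terms linear in $Y$ have vanishing expectation and the terms quadratic in $Y$ cancel (they carry the same coefficient in both functionals), so that e.g.\ $\E[\langle M_tX^\lambda_t,X^\lambda_t\rangle]-\E[\langle M_tX^{\hat\beta}_t,X^{\hat\beta}_t\rangle]=\E[\langle M_t\mu_t,\mu_t\rangle]-\E[\langle M_t(\mu_t+f_t(\mu)),\mu_t+f_t(\mu)\rangle]$, and similarly at time $T$ with $H$; the cross terms are reorganised using $\E[\langle N_tX^\lambda_t,\mu_t\rangle]=\E[\langle N_t\mu_t,\mu_t\rangle]$, $\E[\langle N_tX^{\hat\beta}_t,\E[\mu_t]\rangle]=\langle N_t\E[\mu_t],\E[\mu_t]\rangle+\langle N_t\E[f_t(\mu)],\E[\mu_t]\rangle$, the variance identity $\E[\langle N_t(\mu_t-\E[\mu_t]),\mu_t-\E[\mu_t]\rangle]=\E[\langle N_t\mu_t,\mu_t\rangle]-\langle N_t\E[\mu_t],\E[\mu_t]\rangle$ (and its symmetric analogues for $G_t$ and $\tilde H$), and the independence of $X^{\hat\beta}$ from $\delta$ together with $\E[X^{\hat\beta}_t]=\E[\mu_t]+\E[f_t(\mu)]$ for the $\langle B_t^\top\phi_t\,\cdot\,,\cdot\rangle$ and $\langle B_t^\top\psi_t\,\cdot\,,\cdot\rangle$ terms. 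Collecting all contributions, $\J'(\lambda,\mu)-\J'(\hat\beta,\mu)$ equals the left-hand side of \eqref{LQ:cce:final_optimality} minus its right-hand side, which yields the asserted equivalence.

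The whole conceptual content is concentrated in the decomposition $X^{\hat\beta}_t=(\mu_t+f_t(\mu))-Y_t$ with $Y$ centred and independent of $(\mu,\delta)$: this is precisely what makes the cross terms collapse and what produces the ``variance'' expressions on both sides of \eqref{LQ:cce:final_optimality}. The remainder is a long but mechanical computation; the points requiring care are the transpose/adjoint manipulations when expanding the feedback costs — one must track which matrices are symmetric ($R_t,Q_t,\tilde Q_t,G_t,H,\tilde H$ are, whereas $\Phi_t,\Psi_t,M_t,N_t$ in general are not) — and the square-integrability of $X^\lambda$, $X^{\hat\beta}$, $\mu$ and $f(\mu)$, which follows from $\delta\in\mathbb H^2(\F_0)$, the finiteness of the second moment of $\nu$, and standard linear-SDE estimates, ensuring that every expectation above is well-defined.
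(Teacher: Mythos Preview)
Your proposal is correct and follows essentially the same route as the paper: reduce to $\J'(\lambda,\mu)\le\J'(\hat\beta,\mu)$, identify $X^{\hat\beta}_t-X^\lambda_t=f_t(\mu)$ via the ODE \eqref{LQ:auxiliary_f}, expand both cost functionals in feedback form, and exploit consistency to cancel the idiosyncratic contributions. Your explicit introduction of the centred, $(\mu,\delta)$-independent process $Y_t=\mu_t-X^\lambda_t$ is a clean repackaging of the paper's repeated use of $\E[X_t\mid\mu]=\mu_t$, but the underlying computation is identical.
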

\begin{proof}
Since $(\lambda,\mu) \in \mathcal G$ satisfies the consistency condition \eqref{LQ:def_cce:cons} by Lemma \ref{LQ:lemma_consistency}, we focus on the optimality condition \eqref{LQ:def_cce:opt}.
This is equivalent to verifying that
\begin{equation}\label{LQ:opt2}
    \J'(\lambda,\mu) \leq \J'(\hat{\beta},\mu),
\end{equation}
with $\hat{\beta}$ given by \eqref{LQ_general:best_deviation} and $\J'(\lambda,\mu)$ and $\J'(\hat{\beta},\mu)$ are defined by \eqref{LQ:dev_player:only_cost_terms}.
Denote by $\hat{X}=(\hat{X}_t)_{t \in [0,T]}$ the state of the deviating player when she uses the strategy $\hat{\beta}$ defined in \eqref{LQ_general:best_deviation}, i.e.
\begin{equation*}
d\hat{X}_t = ((A_t - B_t\Phi_t)\hat{X}_t -B_t( \Psi_t\E[\mu_t] +\Theta_t))dt +\sigma_t dW_t,\quad \hat X_0 = \xi,
\end{equation*}
and by $X=(X_t)_{t\in[0,T]}$ the state of the representative player corresponding to the correlated flow \eqref{LQ:corr_flow}, i.e.
\begin{equation*}
    dX_t = \tonde{(A_t -B_t\Phi_t)X_t -B_t(\Psi_t \mu_t + R_t^{-1} \delta_t }dt + \sigma_t dW_t, \quad X_0= \xi.
\end{equation*}
We rewrite the cost functionals $\J'(\lambda,\mu)$ and $\J'(\hat{\beta},\mu)$ by taking advantage of the explicit form of $\lambda$ and $\hat{\beta}$ and functions \eqref{LQ:auxiliary_coefficients}:
\begin{align*}
    \J'& (\lambda,\mu) = \E\bigg[ \int_0^T \Big(  \frac{1}{2}\innprod{M_t X_t,X_t} + \innprod{N_t X_t, \mu_t } +\frac{1}{2}\innprod{G_t \mu_t,\mu_t} +\innprod{q_t -\Phi_t^\top r_t, X_t }  \\
    & +\innprod{(R_t \Phi_t -S_t) X_t ,R_t^{-1}\delta_t  } +\innprod{ R_t\Psi_t  \mu_t ,R_t^{-1}\delta_t } -\innprod{\Psi_t^\top r_t, \mu_t } \\
    & +\frac{1}{2}\innprod{R_t^{-1}\delta_t ,\delta_t  } -\innprod{r_t, R_t^{-1}\delta_t } \Big) dt + \frac{1}{2}\innprod{H X_T, X_T} + \innprod{\Tilde{H}X_T, \mu_T} \bigg],
\end{align*}
and
\begin{align*}
    \J' & (\hat{\beta},\mu)= \E\bigg[ \int_0^T \Big(  \frac{1}{2}\innprod{M_t \hat{X}_t,\hat{X}_t} + \innprod{N_t \hat{X}_t, \E[\mu_t] } +\frac{1}{2}\innprod{G_t \E[\mu_t],\E[\mu_t]} +\innprod{q_t -\Phi_t^\top r_t, \hat{X}_t } \\
    & +\innprod{(R_t \Phi_t - S_t) \hat X_t , \Theta_t } +\innprod{ R_t \Psi_t \E[\mu_t] , \Theta_t }  -\innprod{\Psi_t^\top r_t, \E[\mu_t] } \\
    & +\frac{1}{2}\innprod{R_t \Theta_t,\Theta_t } -\innprod{r_t, \Theta_t} \Big) dt + \frac{1}{2}\innprod{H \hat{X}_T, \hat{X}_T} + \innprod{\Tilde{H}\hat{X}_T, \E[\mu_T]} \bigg]
\end{align*}
By It\^o's formula, we get
\begin{equation*}
\begin{aligned}
    d(\hat{X}_t - X_t) = (A_t - B_t \Phi_t )(\hat{X}_t - X_t) dt + B_t\tonde{ \Psi_t(\mu_t - \E[\mu_t]) + R_t^{-1}\delta_t - \Theta_t }dt, \: \hat{X}_0 - X_0 = 0,
\end{aligned}
\end{equation*}
so that it holds
\begin{equation}\label{LQ:decomposition}
    \hat{X}_t = X_t + f_t(\mu), \quad 0 \leq t \leq T, \: \prob\text{-a.s.}
\end{equation}
In particular, we note that $f(\mu)$ is $\sigma(\mu)$-measurable.
Then, we have
\begin{equation*}
\begin{aligned}
    \E & [\innprod{M_t \hat{X}_t,\hat{X}_t}] = \E[\innprod{M_t (X_t + f_t(\mu)),X_t + f_t(\mu)}] \\
    & = \E[\innprod{M_tX_t,X_t}] + \E[\innprod{M_t (\mu_t + f_t(\mu)),\mu_t + f_t(\mu)}] - \E[\innprod{M_t \mu_t,\mu_t}],
\end{aligned}
\end{equation*}
where we have used the fact that $X$ satisfies the consistency condition \eqref{LQ:def_cce:cons}.
Therefore, we have
\begin{align*}
    \J' & (\hat{\beta},\mu) = \E\bigg[ \int_0^T \Big(  \frac{1}{2}\innprod{M_t X_t,X_t} +\frac{1}{2}\innprod{M_t (\mu_t + f_t(\mu)),\mu_t + f_t(\mu)} -\frac{1}{2}\innprod{M_t \mu_t,\mu_t} + \innprod{N_t X_t, \E[\mu_t] } \\
    & + \innprod{N_t f_t(\mu), \E[\mu_t] }  + \frac{1}{2}\innprod{G_t \E[\mu_t],\E[\mu_t]} +\innprod{q_t -\Phi_t^\top r_t, X_t } + \innprod{q_t -\Phi_t^\top r_t, f_t(\mu) }  \\
    & +\innprod{ (R_t \Phi_t - S_t)  X_t , \Theta_t } + \innprod{(R_t \Phi_t - S_t)  f_t(\mu) , \Theta_t }
    + \innprod{ R_t \Psi_t \E[\mu_t] , \Theta_t }  -\innprod{\Psi_t^T r_t, \E[\mu_t] } \\
    & +\frac{1}{2}\innprod{R_t\Theta_t,\Theta_t } -\innprod{r_t,\Theta_t} \Big) dt  + \frac{1}{2}\innprod{H X_T, X_T} + \frac{1}{2}\innprod{H (\mu_T + f_T(\mu)), \mu_T + f_T(\mu) } -\frac{1}{2}\innprod{H\mu_T, \mu_T} \\
    & + \innprod{\Tilde{H} X_T, \E[\mu_T]} + \innprod{\Tilde{H} f_T(\mu), \E[\mu_T]} \bigg].
\end{align*}
Since the correlated flow $(\lambda,\mu)$ satisfies the consistency condition \eqref{LQ:def_cce:cons}, and noticing that
\begin{align*}
    \E & [\innprod{N_t \mu_t, \E[\mu_t] - \mu_t} ] =  - \E[\innprod{N_t (\E[\mu_t]-\mu_t), \E[\mu_t] - \mu_t} ], \\
    \E & [\innprod{(R_t\Phi_t-S_t) \mu_t , \Theta_t }] = \innprod{(R_t \Phi_t - S_t) \E[\mu_t] , \Theta_t },
\end{align*}
we obtain
\begin{align*}
    \J' & (\hat{\beta},\mu) - \J'(\lambda,\mu) = \E\bigg[ \int_0^T \Big( \frac{1}{2}\innprod{M_t (\mu_t + f_t(\mu)),\mu_t + f_t(\mu)} -\frac{1}{2}\innprod{M_t \mu_t,\mu_t}  \\
    & - \innprod{N_t (\E[\mu_t]-\mu_t), \E[\mu_t] - \mu_t}  + \innprod{N_t f_t(\mu), \E[\mu_t] }  +\frac{1}{2}\innprod{G_t\E[\mu_t],\E[\mu_t]} - \frac{1}{2}\innprod{G_t \mu_t,\mu_t} \\
    & + \innprod{q_t - \Phi_t^\top r_t, f_t(\mu) }  + \innprod{(R_t(\Phi_t+\Psi_t) - S_t ) \E[\mu_t] , \Theta_t }-\innprod{(R_t(\Phi_t+\Psi_t) - S_t ) \mu_t , R_t^{-1}\delta_t } \\
    & + \innprod{ (R_t\Phi_t - S_t) f_t(\mu) , \Theta_t }    +\frac{1}{2} \innprod{R_t\Theta_t,\Theta_t } -\frac{1}{2} \innprod{R_t^{-1}\delta_t ,\delta_t } -\innprod{r_t, \Theta_t - R_t^{-1}\delta_t } \Big) dt \\
    & + \frac{1}{2}\innprod{H (\mu_T + f_T(\mu)),\mu_T + f_T(\mu)} -\frac{1}{2}\innprod{H \mu_T,\mu_T}  \\
    & - \innprod{\tilde H (\E[\mu_T]-\mu_T), \E[\mu_T] - \mu_T}  + \innprod{\Tilde{H} f_T(\mu),\E[\mu_T]} \bigg].
\end{align*}
Therefore, the correlated flow $(\lambda,\mu)$ defined by \eqref{LQ:corr_flow} is a correlated flow if and only if the RHS above is non-negative.
By rearranging the terms and using the equalities
\begin{equation}\label{LQ_general:optimality:final_identities}
R_t \Phi_t - S_t = B_t ^\top \phi_t, \quad R_t \Psi_t = B_t ^\top \psi_t,    
\end{equation}
we get condition \eqref{LQ:cce:final_optimality}.
\end{proof}

The condition for a correlated flow of class $\mathcal G$ to be a mean field CCE is now reduced to an optimality condition which only depends on the population average state and the correlating device of the mediator, i.e. on the joint law of $(\delta,\mu)$. Even though the inequality looks quite long, it can become very tractable and easy to interpret when one specifies some class of dynamics for $\mu$ as done in Section \ref{sec:AG}.

\section{Comparison with MFC solution and mean field NE}\label{sec:comparison}
In this section, we analyze the relationship between mean field CCEs, mean field NEs and MFC solutions. In more detail, we prove the following results:
\begin{itemize}
    \item We compute the MFC solution $\hat{\alpha}^{MFC}$ and we show that it is unique and optimal in the broader class of controls $\mathbb H^2(\mathbb F)$.
    This is accomplished in Proposition \ref{MKV:prop_mkv_solution} and Lemma \ref{MFC:strict_optimality}.
    Then, we show that no mean field CCE can outperform the payoff of the MFC solution. Moreover, if the MFC solution is not a mean field NE, we establish that the MFC payoff is unattainable by a mean field CCE. This is accomplished in Theorem \ref{thm:no_outperf_over_mfc}.
    \item As for mean field NE, we show in Proposition \ref{LQ:MFG:prop_mfg_solution} that there exists a unique mean field NE in our setting.
    Then, we show that for a correlated flow $(\lambda,\mu)$ to be a mean field CCE different from the mean field NE, it is necessary that the flow of moments $\mu$ is stochastic, which is the content of Theorem \ref{LQ_general:no_deterministic_cce}.
    \item Finally, Theorem \ref{LQ_general:cce_mfg_comparison} gives a condition so that a mean field CCE $(\lambda,\mu) \in \mathcal G$  yields a higher payoff than the mean field NE.
\end{itemize}
We remark that the results in the first two points above are fully general, in the sense that they do not restrict to correlated flows in the class $\mathcal{G}$ defined by \eqref{LQ:corr_flow}, while the condition on a mean field CCE $(\lambda,\mu)$ to outperform the payoff of the mean field NE is provided only for correlated flows in $\mathcal{G}$.

We recall here for reader's convenience the definitions of both mean field NE and MFC solution.

\begin{definition}\label{MFG:definition}
We say that a pair $(\hat{\alpha},\hat{m}) \in \mathbb H^2(\mathbb F^1) \times \mathcal{C}([0,T];\R^d)$ is a mean field Nash equilibrium if the following properties hold:
\begin{enumerate}[label=(\roman*)]
\item Optimality: $\hat{\alpha}$ maximizes $\J(\cdot,\hat{m})$ over $\mathbb H^2(\mathbb F^1)$, i.e.,
\begin{equation}\label{MFG:definition:opt}
    \J(\hat{\alpha},\hat{m}) = \max_{\beta \in \mathbb H^2(\mathbb F^1)} \J(\beta,\hat{m}).
\end{equation}
\item Consistency: let $X^{NE}=(X^{NE}_t)_{t \in [0,T]}$ be the solution to equation \eqref{LQ_general:dynamics} with the control process $\hat{\alpha}$.
For every time $t \in [0,T]$, $\hat{m}_t$ equals the expectation of $X^{NE}_t$, i.e.,
\begin{equation}\label{MFG:definition:cons}
    \hat{m}_t=\E[X^{NE}_t], \quad \forall t \in [0,T].
\end{equation}
\end{enumerate}
\end{definition}

\begin{definition}\label{MFC:definition}
For any $\beta \in \mathbb H^2(\mathbb F^1)$, let $X^{\beta}$ be the solution of equation \eqref{LQ_general:dynamics} with $\beta$ instead of $\lambda$.
Denote by $\E[X^{\beta}]=(\E[X^{\beta}_t])_{t \in [0,T]}$ the corresponding flow of first order moments.
We say that a strategy $\hat{\alpha}^{MFC}$ is a MFC solution, if
\begin{equation}\label{MFC:definition:optimality}
    \J(\hat{\alpha}^{MFC},\E[X^{\hat{\alpha}^{MFC}}]) = \max_{\beta \in \mathbb H^2(\mathbb F^1)} \J(\beta,\E[X^{\beta}]).
\end{equation}
\end{definition}

\subsection{Comparison with MFC solution}
In this subsection we compare the expected payoffs of mean field CCEs and the MFC solution.
In our setting, there exists a unique MFC solution.
Since computations are very standard, we postpone them to the \nameref{sec:appendix}.

\begin{prop}\label{MKV:prop_mkv_solution}
Let $\phi^{MFC}$ and $\theta^{MFC}$ be the solutions of the following equations:
\begin{equation}\label{MFC:mean_riccati}
\left \{ \begin{aligned}
    & \dot{\phi}^{MFC}_t + \phi^{MFC}_t A_t + A^\top_t \phi^{MFC}_t + (Q_t + 2\Tilde{Q}_t + \Bar{Q}_t) -(\phi^{MFC}_t B_t  + S_t^\top )R_t^{-1} (B_t^\top \phi^{MFC}_t + S_t)  = 0, \\
    & \phi^{MFC}_T = H + 2\Tilde{H} + \Bar{H}, \\
    & \dot{\theta}^{MFC}_t + A_t^\top \theta^{MFC}_t + q_t - L_t - (\phi^{MFC}_t B_t  + S_t^\top )R_t^{-1} ( B_t^\top \theta^{MFC}_t + r_t ) = 0, \\
    & \theta^{MFC}_T = 0.
\end{aligned} \right.
\end{equation}
Define $\Bar{A}$ and $\Bar{B}$ as
\begin{equation}\label{MFC:flow_of_moments:coefficients}
\begin{aligned}
    & \Bar{A}_t = A_t - B_tR_t^{-1}B_t \phi^{MFC}_t -B_tR_t^{-1}S_t,  && \Bar{B}_t =B_tR_t^{-1}(B_t^\top\theta^{MFC}_t + r_t). 
\end{aligned}
\end{equation}
Let $\bar{\psi}$ and $\bar{\theta}$ be the solutions of the following equations:
\begin{equation}\label{MFC:riccati_control}
\left\{ \begin{aligned}
    & \dot{\bar{\psi}}_t + \Bar{A}_t^\top\bar{\psi}_t + A_t^\top \bar{\psi}_t + (\Bar{Q}_t+2\Tilde{Q}_t) - (\phi_t B_t +S^\top _t)  R_t^{-1} B_t^\top\bar{\psi}_t  = 0, && \bar{\psi}_T = \Bar{H} +2\Tilde{H}, \\
    & \dot{\bar{\theta}}_t -\bar{\psi}_t\Bar{B}_t + A_t^\top\bar{\theta}_t + q_t  - (\phi_t B_t + S_t ^\top) R_t^{-1} (B_t^\top\bar{\theta}_t + r_t) = 0, && \bar{\theta}_T = 0. \\
\end{aligned} \right.
\end{equation}
Let $\phi$ be the solution of the matrix Riccati equation in \eqref{LQ:riccati_eq}.
There exists a unique MFC solution $\hat{\alpha}^{MFC}$, which is given by
\begin{subequations}\label{MFC:solution}
\begin{align}
& \hat{\alpha}^{MFC}_t = -R_t^{-1} (( B_t ^\top \phi_t + S_t)X^{MFC}_t +  B^\top \bar{\psi}_t \bar{x}^{MFC}_t +(B_t ^\top\bar{\theta}_t + r_t)), \label{MFC:solution:control} \\
& \dot{\bar{x}}^{MFC}_t = (A_t - B_tR_t^{-1}B_t \phi^{MFC}_t -B_tR_t^{-1}S_t )\bar{x}^{MFC}_t -B_tR_t^{-1}(B_t^\top\theta^{MFC}_t + r_t), \quad \bar{x}^{MFC}_0 = \nu_1, \label{MFC:solution:flow_of_moments}
\end{align}
\end{subequations}
where $X^{MFC}$ is the solution of
\begin{equation}\label{MFC:solution:state_process}
\left \{ \begin{aligned}
& dX^{MFC}_t = ( (A_t -R_t^{-1} ( B_t ^\top \phi_t + S_t) )X^{MFC}_t   -R_t^{-1}B_t ^\top \bar{\psi}_t \bar{x}^{MFC}_t -R_t^{-1}(B_t ^\top\bar{\theta}_t + r_t) )dt + \sigma_tdW_t,  \\
& X_0 = \xi. \end{aligned} \right. 
\end{equation}
In particular, it holds $\bar{x}^{MFC}_t=\E[X^{MFC}_t]$ for every $t \in [0,T]$.
\end{prop}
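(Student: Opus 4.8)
The plan is to treat the mean field control problem as a deterministic-flow optimal control problem over $\mathbb H^2(\mathbb F^1)$, where the flow of moments $\E[X^\beta_t]$ is itself determined by the control $\beta$. The cleanest route is to decompose the state as $X^\beta_t = \bar x_t + Z_t$, where $\bar x_t := \E[X^\beta_t]$ solves the ODE obtained by taking expectations in \eqref{LQ_general:dynamics} with $\beta$, and $Z_t := X^\beta_t - \bar x_t$ is the centered part driven by the noise. Writing the controlled drift in feedback form is what will ultimately produce the two-scale structure ($X^{MFC}$ and $\bar x^{MFC}$) appearing in \eqref{MFC:solution}. I would first substitute this decomposition into the payoff \eqref{LQ_general:payoff} (with $\mu_t$ replaced by $\bar x_t$, since in the MFC problem the flow of moments is the control's own mean), expand all quadratic forms, and use $\E[Z_t]=0$ together with independence of the cross terms to split $\J$ into a part depending only on the deterministic pair $(\bar x, \E[\beta])$ and a part depending on the fluctuations $(Z, \beta - \E[\beta])$. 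The fluctuation part is a genuinely stochastic LQ problem with cost matrices $Q,R,S$ and terminal $H$, governed precisely by the Riccati equation for $\phi$ in \eqref{LQ:riccati_eq}; the mean part is a deterministic LQ problem whose cost matrices are the symmetrized combinations $Q+2\tilde Q+\bar Q$ and $H+2\tilde H+\bar H$, which is why $\phi^{MFC}$ appears with exactly those coefficients.

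Next I would apply the stochastic Pontryagin maximum principle (or equivalently, complete the square using the ansatz $Y_t = \phi_t X^{MFC}_t + \bar\psi_t \bar x^{MFC}_t + \bar\theta_t$ for the adjoint process). Since both the fluctuation and mean subproblems are strictly convex in the control — convexity is guaranteed by Assumptions \ref{LQ:assumptions}(4)–(5), namely $R_t \geq d_2 I_k$ with $d_2>0$, $Q_t,H,\bar H,\tilde H \geq 0$, and the smallness condition $\sup_t|S_t|^2 < d_1 d_2$ which ensures the running cost $\tfrac12\langle Q_t x,x\rangle + \langle S_t x, a\rangle + \tfrac12\langle R_t a,a\rangle$ is jointly convex and coercive in $a$ — there is a unique minimizer, obtained by pointwise minimization of the Hamiltonian over $a \in \R^k$, yielding $\hat\alpha^{MFC}_t = -R_t^{-1}(B_t^\top Y_t + \text{lower order terms})$. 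Plugging the adjoint ansatz in and matching coefficients of $X^{MFC}_t$, of $\bar x^{MFC}_t$, and of the constant term forces $\phi$ to solve the Riccati equation in \eqref{LQ:riccati_eq}, $\bar\psi$ to solve the linear equation \eqref{MFC:riccati_control} (the term $A_t^\top\bar\psi_t$ together with $\bar A_t^\top\bar\psi_t$ arises from differentiating the product $\bar\psi_t\bar x^{MFC}_t$ and substituting the $\bar x^{MFC}$-dynamics \eqref{MFC:flow_of_moments:coefficients}), and $\bar\theta$ to solve the affine equation in \eqref{MFC:riccati_control}; the roles of $\phi^{MFC},\theta^{MFC}$ enter only through the definition of $\bar A_t, \bar B_t$ that closes the $\bar x^{MFC}$ loop. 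Existence and uniqueness of the Riccati solution $\phi$ on $[0,T]$ follows from the standard theory for symmetric Riccati equations under these sign and integrability conditions; the remaining equations are linear ODEs with $L^\infty$ (resp.\ continuous) coefficients, hence globally well-posed.

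Finally I would verify the two ``in particular'' claims: that $\bar x^{MFC}_t = \E[X^{MFC}_t]$ for all $t$ — which is immediate by taking expectations in \eqref{MFC:solution:state_process}, since $\E[\sigma_t dW_t]=0$ and the resulting ODE for $\E[X^{MFC}]$ coincides verbatim with \eqref{MFC:solution:flow_of_moments} with the same initial condition $\nu_1$, so uniqueness of linear ODE solutions gives equality — and that $\hat\alpha^{MFC}$ indeed achieves the maximum in \eqref{MFC:definition:optimality}, which follows from strict concavity of $\beta \mapsto \J(\beta,\E[X^\beta])$ (the separated structure above shows this map is a sum of two strictly concave functionals), so that the critical point identified by the maximum principle is the unique global maximizer; the detailed verification is relegated to Lemma \ref{MFC:strict_optimality}. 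The main obstacle is bookkeeping: correctly tracking the symmetrization of the cross terms $\langle\tilde Q_t X_t,\mu_t\rangle$ and $\langle\tilde H X_T,\mu_T\rangle$ when $\mu$ is replaced by $\E[X^\beta]$, so that the factor-of-two combinations $Q+2\tilde Q+\bar Q$ and $\bar Q+2\tilde Q$ come out right in \eqref{MFC:mean_riccati} and \eqref{MFC:riccati_control}, and then matching the adjoint-equation coefficients without sign errors. Since these are routine, we carry them out in the \nameref{sec:appendix}.
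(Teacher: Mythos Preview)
Your proposal is correct and follows essentially the same approach as the paper: both use the Pontryagin maximum principle for the MFC problem, take expectations to isolate the deterministic mean subproblem (yielding $\phi^{MFC},\theta^{MFC}$ via the ansatz $y_t=\phi^{MFC}_t x_t+\theta^{MFC}_t$ on the expected FBODE), and then solve the full FBSDE with the adjoint ansatz $Y_t=\phi_t X_t+\bar\psi_t\bar x^{MFC}_t+\bar\theta_t$, which forces exactly the equations \eqref{LQ:riccati_eq} and \eqref{MFC:riccati_control}. Your additional emphasis on the mean/fluctuation decomposition $X^\beta=\bar x+Z$ is a helpful heuristic for explaining why the symmetrized combinations $Q+2\tilde Q+\bar Q$ and $H+2\tilde H+\bar H$ appear, but the technical content is the same as the paper's direct FBSDE computation.
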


Showing that no mean field CCE can outperform the payoff of the MFC solution requires first to show that the MFC solution is actually optimal over the larger control set $\mathbb H^2(\mathbb F)$, as it is done in the following preliminary lemma:
\begin{lemma}\label{MFC:strict_optimality}
Let $\hat{\alpha}^{MFC}$ be the solution of the MFC problem.
Then, for any $\beta$ in $\mathbb H^2(\mathbb F)$, $\beta \neq \hat{\alpha}$, it holds
\begin{equation}
    \J(\hat{\alpha}^{MFC},\bar{x}^{MFC}) > \J(\beta,\E[X^\beta]).
\end{equation}
\end{lemma}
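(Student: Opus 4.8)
The plan is to prove the stronger statement by a completion-of-squares (verification) argument that never uses $\mathbb F^1$-measurability of the control, but only square integrability together with the fact that the flow of moments fed into $\J$ is $\E[X^\beta]$. Fix an arbitrary $\beta\in\mathbb H^2(\mathbb F)$, write $X=X^\beta$, $m_t=\E[X_t]$, and split state and control into mean and fluctuation: $X_t=m_t+\tilde X_t$ with $\tilde X_t:=X_t-m_t$ (so $\E[\tilde X_t]=0$, $\tilde X_0=\xi-\nu_1$ and $d\tilde X_t=(A_t\tilde X_t+B_t\tilde\beta_t)dt+\sigma_t dW_t$, where $\tilde\beta_t:=\beta_t-\E[\beta_t]$), and $\beta_t=\bar\beta_t+\tilde\beta_t$ with $\bar\beta_t:=\E[\beta_t]$ (so $\dot m_t=A_tm_t+B_t\bar\beta_t$, $m_0=\nu_1$). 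Since all cross terms between zero-mean and deterministic quantities vanish upon taking expectations, $\J(\beta,\E[X^\beta])$ decouples as $\J(\beta,\E[X^\beta])=\J_{\mathrm{fl}}(\tilde\beta)+\J_{\mathrm{mn}}(\bar\beta)$, where $\J_{\mathrm{fl}}$ is a standard stochastic LQ functional with weights $(Q,S,R,H)$ and noise $\sigma$, and $\J_{\mathrm{mn}}$ is a deterministic LQ functional with state weights $Q+2\tilde Q+\bar Q$ and $H+2\tilde H+\bar H$, control/cross weights $(R,S)$, and the linear terms in $L,q,r$.

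Next I would complete the square in each piece. For the fluctuation part, applying It\^o's formula to $t\mapsto\langle\phi_t\tilde X_t,\tilde X_t\rangle$ with $\phi$ the Riccati solution of \eqref{LQ:riccati_eq}, and using that equation to cancel the terms quadratic and bilinear in $\tilde X_t$, gives
\[
\J_{\mathrm{fl}}(\tilde\beta)=C_{\mathrm{fl}}-\tfrac12\,\E\!\Big[\int_0^T\langle R_t(\tilde\beta_t-\hat{\tilde\beta}_t),\,\tilde\beta_t-\hat{\tilde\beta}_t\rangle\,dt\Big],\qquad \hat{\tilde\beta}_t:=-R_t^{-1}(B_t^\top\phi_t+S_t)\tilde X_t,
\]
where $C_{\mathrm{fl}}=-\tfrac12\E[\langle\phi_0(\xi-\nu_1),\xi-\nu_1\rangle]-\tfrac12\int_0^T\mathrm{tr}(\sigma_t^\top\phi_t\sigma_t)\,dt$ does not depend on $\beta$. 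The analogous deterministic computation on $t\mapsto\langle\phi^{MFC}_t m_t,m_t\rangle$ (with $\phi^{MFC}$ from \eqref{MFC:mean_riccati}) together with the linear corrector $\theta^{MFC}$/$\bar\theta$ to absorb the $L,q,r$ terms gives $\J_{\mathrm{mn}}(\bar\beta)=C_{\mathrm{mn}}-\tfrac12\int_0^T\langle R_t(\bar\beta_t-\hat{\bar\beta}_t),\bar\beta_t-\hat{\bar\beta}_t\rangle\,dt$ with $\hat{\bar\beta}_t$ the mean-field feedback and $C_{\mathrm{mn}}$ again independent of $\beta$ (it depends only on the fixed $\nu_1$). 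Summing,
\[
\J(\beta,\E[X^\beta])=C-\tfrac12\E\!\Big[\int_0^T\!\langle R_t(\tilde\beta_t-\hat{\tilde\beta}_t),\tilde\beta_t-\hat{\tilde\beta}_t\rangle\,dt\Big]-\tfrac12\int_0^T\!\langle R_t(\bar\beta_t-\hat{\bar\beta}_t),\bar\beta_t-\hat{\bar\beta}_t\rangle\,dt,
\]
with $C:=C_{\mathrm{fl}}+C_{\mathrm{mn}}$ a constant, and both residuals nonnegative because $R_t\geq d_2I_k$ with $d_2>0$.

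Finally I would conclude: by Proposition \ref{MKV:prop_mkv_solution}, $\hat\alpha^{MFC}\in\mathbb H^2(\mathbb F^1)\subseteq\mathbb H^2(\mathbb F)$ realizes exactly the feedbacks $\hat{\tilde\beta}$ and $\hat{\bar\beta}$ along $X^{MFC}$ and $\bar x^{MFC}$, so both residuals vanish for it and $\J(\hat\alpha^{MFC},\bar x^{MFC})=C$. Hence $\J(\beta,\E[X^\beta])\le C=\J(\hat\alpha^{MFC},\bar x^{MFC})$ for every $\beta\in\mathbb H^2(\mathbb F)$; equality forces both residuals to zero, and since $R_t$ is uniformly positive definite this means $\tilde\beta_t=-R_t^{-1}(B_t^\top\phi_t+S_t)\tilde X_t$ and $\bar\beta_t=\hat{\bar\beta}_t$ for a.e.\ $t$, $\prob$-a.s.; these are closed-loop equations with unique solutions (the fluctuation and mean parts of $X^{MFC}$), whence $\beta=\hat\alpha^{MFC}$ in $\mathbb H^2(\mathbb F)$. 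Equivalently, $\beta\neq\hat\alpha^{MFC}$ yields strict inequality. The main obstacle is the bookkeeping in the decoupling/completion-of-squares step: one must verify that the cross terms cancel in expectation, that $C_{\mathrm{fl}}$ and $C_{\mathrm{mn}}$ are genuinely control-independent (which hinges on $\tilde X_0$ and $m_0$ being fixed), and that the two residual feedbacks assemble precisely into the expression \eqref{MFC:solution:control} for $\hat\alpha^{MFC}$ — none of which uses adaptedness to $\mathbb F^1$, which is exactly why optimality extends from $\mathbb H^2(\mathbb F^1)$ to $\mathbb H^2(\mathbb F)$.
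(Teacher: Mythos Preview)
Your argument is correct and takes a genuinely different route from the paper. The paper proceeds in two conceptual steps: first it observes that, after rewriting the quadratic part in the variables $(\E[X_t],X_t-\E[X_t],\alpha_t)$, the functional $\J^{MFC}(\alpha):=\J(\alpha,\E[X^\alpha])$ is strictly concave, hence has at most one maximizer over $\mathbb H^2(\mathbb F)$; then, to identify the maximizer, it projects an arbitrary $\beta\in\mathbb H^2(\mathbb F)$ onto $\mathbb H^2(\mathbb F^1)$ via $\tilde\beta_t:=\E[\beta_t\mid\F^1_t]$, checks that $X^{\tilde\beta}_t=\E[X^\beta_t\mid\F^1_t]$ and $\E[X^{\tilde\beta}_t]=\E[X^\beta_t]$, and applies conditional Jensen to get $\J^{MFC}(\beta)\le\J^{MFC}(\tilde\beta)\le\J^{MFC}(\hat\alpha^{MFC})$, the last inequality coming from Proposition~\ref{MKV:prop_mkv_solution} over $\mathbb H^2(\mathbb F^1)$.

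Your mean/fluctuation decoupling plus completion of squares with the two Riccati functions $\phi$ and $\phi^{MFC}$ reaches the same conclusion without the projection step, and in fact without invoking the optimality statement of Proposition~\ref{MKV:prop_mkv_solution} at all: you obtain the maximal value $C$ and the unique maximizing feedback directly. The paper's approach is more portable (the Jensen/projection trick would work for any concave running cost, not just LQ, and isolates clearly \emph{why} enlarging the filtration cannot help), whereas yours is more self-contained and constructive in the LQ setting. Both rely on the same Riccati solutions being well defined, so neither adds or removes hypotheses in that respect.
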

\begin{proof}
To ease the notation, we set
\begin{equation}\label{MFC:MFC_payoff}
    \J^{MFC}(\alpha) :=\J(\alpha,\E[X^{\alpha}]),
\end{equation}
where the process $X$ has dynamics given by \eqref{LQ_general:dynamics}, for any $\alpha \in \mathbb H^2(\mathbb F)$.
We observe that
\begin{align*}
&\E\quadre{ \frac{1}{2}\innprod{\bar Q_t \E[X_t], \E[X_t] } + \innprod{\Tilde{Q}_t X_t, \E[X_t] }+ \frac{1}{2}\innprod{ Q_t X_t, X_t }  + \frac{1}{2}\innprod{R_t\alpha_t,\alpha_t} + \innprod{S_t X_t, \alpha_t } }\\
& = \E\quadre{ \frac{1}{2} \innprod{(\bar Q_t +2\Tilde{Q}_t) \E[X_t], \E[X_t] } + \frac{1}{2}\innprod{ Q_t X_t, X_t }   + \frac{1}{2}\innprod{R_t\alpha_t,\alpha_t} + \innprod{S_t X_t, \alpha_t } } \\
& = \E\left[ \frac{1}{2}\innprod{(\bar Q_t +2\Tilde{Q}_t +Q_t) \E[X_t], \E[X_t] } + \frac{1}{2}\innprod{ Q_t (X_t-\E[X_t]), (X_t-\E[X_t]) }  \right. \\
& \left. + \frac{1}{2}\innprod{R_t\alpha_t,\alpha_t} + \innprod{S_t (X_t-\E[X_t]), \alpha_t } 
+ \innprod{S_t\E[X_t], \alpha_t } \right].
\end{align*}
By Assumptions \ref{LQ:assumptions}, this equality implies that the running payoff in the functional $\J^{MFC}$ is strictly concave jointly in $\E[X_t]$, $X_t - \E[X_t]$ and $\alpha_t$, for every $t \in [0,T]$.
Since $\J^{MFC}$ is also upper semi-continuous, this implies that the maximum exists and that it is unique over the broader class $\mathbb H^2(\mathbb F)$.

\medskip
\noindent We are left to show that the maximum point is indeed $\hat{\alpha}^{MFC}$.
For the sake of clarity, we set
\begin{equation}\label{MKV:running_final_costs}
\begin{aligned}
f(t,x,m,a) & = \innprod{L_t,m} -\frac{1}{2}\innprod{\bar Q_t m, m } -  \frac{1}{2}\innprod{ Q_t x, x }  - \innprod{\Tilde{Q}_t x, m } -\innprod{q_t,x} \\
& - \frac{1}{2}\innprod{R_ta,a} - \innprod{S_t x, a } - \innprod{r_t,a}, \\
g(x,m) & = -\left( \frac{1}{2}\innprod{ \bar H m, m } + \frac{1}{2}\innprod{ H x, x } + \innprod{\Tilde{H} x, m } \right).
\end{aligned}
\end{equation}
Let $\beta$ in $\mathbb H^2(\mathbb F)$.
We define the following process:
\begin{equation}
\Tilde{\beta}_t = \E[ \beta_t \vert \F^1_t ], \quad t \in [0,T].
\end{equation}
Since $\mathbb{F}^1$ satisfies the usual assumptions, $\tilde\beta$ can be taken $\mathbb{F}^1$-progressively measurable (see, e.g., \cite[Section 2]{bremaud1978changes}).
Let $\Tilde{X}=(\Tilde{X}_t)_{t \in [0,T]}$ be the solution of 
\[
d\Tilde{X}_t = (A_t \Tilde{X}_t + B_t\Tilde{\beta}_t)dt + \sigma_t dW_t, \quad \Tilde{X}_0 = \xi.
\]
Then, using the explicit expression for the solution $\tilde X$ of the SDE above, it can be shown by direct computation that
\begin{equation*}
    \Tilde{X}_t = \E[ X^\beta_t \vert \F^1_t ] \quad \prob\text{-a.s.},\quad t \in [0,T].
\end{equation*}
Due to the concave linear quadratic structure of $f$, we have the following:
\begin{align*}
\J^{MFC}(\beta) & =\E\quadre{ \int_0^T f(t,X^\beta_t,\E[X^{\beta}_t],\beta_t) dt + g(X^\beta_T,\E[X^\beta_T])} \\
& = \int_0^T \E\quadre{ \E\quadre{ f(t,X^\beta_t,\E[X^{\beta}_t],\beta_t) \; \vert \; \F^1_t} } dt + \E\quadre{ \E\quadre{ g(X^\beta_T,\E[X^\beta_T]) \; \vert \; \F^1_T} } \\
& \leq \E\quadre{ \int_0^T f(t,\E[X^\beta_t \vert \F^1_t],\E[X^{\beta}_t],\E[\beta_t \vert \F^1_t]) dt + g(\E[X^\beta_T\vert \F^1_T],\E[X^\beta_T])} \\
& = \E\quadre{ \int_0^T f(t,\Tilde{X}_t ,\E[\Tilde{X}_t],\Tilde{\beta}_t) dt + g(\Tilde{X}_T,\E[\Tilde{X}_T])} =  \J^{MFC}(\Tilde{\beta}),
\end{align*}
where we have used the fact that $\E[X^\beta_t]=\E[\Tilde{X}_t]$ for every time $t$.
Since $\Tilde{\beta}$ belongs to $\mathbb H^2(\mathbb F^1)$, Proposition \ref{MKV:prop_mkv_solution} implies 
\[
\J^{MFC}(\beta) \leq \J^{MFC}(\Tilde{\beta}) \leq \J^{MFC}(\hat{\alpha}^{MFC}).
\]
By strict concavity, we deduce that the inequality is strict for any $\beta \neq \hat{\alpha}$.
\end{proof}

In the next theorem we prove that the MFC solution provides an upper bound to the payoffs of any mean field CCEs. Moreover, this upper bound can not be attained unless the MFC solution is a mean field NE. 

\begin{thm}[No outperformance over the MFC solution]\label{thm:no_outperf_over_mfc}
Let $(\lambda,\mu)$ a mean field CCE. Then, the following holds: 
\begin{enumerate}[label=(\roman*)]    
\item If $\J(\lambda,\mu) \geq \J(\hat \alpha^{MFC},\bar x^{MFC})$, then $(\lambda,\mu) = (\hat{\alpha}^{MFC},\bar{x}^{MFC})$, so $\J(\lambda,\mu) = \J(\hat \alpha^{MFC},\bar x^{MFC})$; \label{LQ:cce_MFC:prop_no_outperformance}
\item If the MFC solution is not a mean field NE, then $\J(\lambda,\mu) < \J(\hat \alpha^{MFC},\bar x^{MFC})$. In particular, the MFC solution is not a mean field CCE either. \label{item:mfc_not_cce}
\end{enumerate}
\end{thm}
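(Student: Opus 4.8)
The plan is to derive everything from two facts: the strict optimality of the MFC solution over the class $\mathbb H^2(\mathbb F)$ established in Lemma~\ref{MFC:strict_optimality}, and the fact that any mean field CCE $(\lambda,\mu)$ satisfies both the consistency condition $\mu_t=\E[X_t\vert\mu]$ and the optimality (no-deviation) condition $\J(\lambda,\mu)\ge\J(\beta,\mu)$ for all $\beta\in\mathbb H^2(\mathbb F^1)$. The main point to exploit is that, because of consistency, the payoff $\J(\lambda,\mu)$ of a mean field CCE \emph{coincides} with the MFC-type payoff $\J^{MFC}(\lambda)=\J(\lambda,\E[X^\lambda])$ evaluated at the recommended strategy $\lambda$. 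Indeed, consistency gives $\mu_t=\E[X_t\vert\mu]$, so $\E[\mu_t]=\E[X_t]=\E[X^\lambda_t]$, and the terms in $\J$ that are quadratic in $\mu$ only, namely $\E[\innprod{\bar Q_t\mu_t,\mu_t}]$ and $\E[\innprod{\tilde Q_t X_t,\mu_t}]$ and $\E[\innprod{\tilde H X_T,\mu_T}]$, reduce after conditioning on $\mu$ (for the cross terms) to the same expressions one would get by replacing $\mu_t$ with $\E[X^\lambda_t]$; more carefully, $\E[\innprod{\bar Q_t\mu_t,\mu_t}]\ge\innprod{\bar Q_t\E[\mu_t],\E[\mu_t]}$ since $\bar Q_t\ge0$, and $\E[\innprod{\tilde Q_t X_t,\mu_t}]=\E[\innprod{\tilde Q_t\E[X_t\vert\mu],\mu_t}]=\E[\innprod{\tilde Q_t\mu_t,\mu_t}]$ — so in fact one needs to be slightly careful here and show the net effect. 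The cleanest route is: $\J(\lambda,\mu)\le\J(\lambda,\E[X^\lambda])=\J^{MFC}(\lambda)\le\J^{MFC}(\hat\alpha^{MFC})=\J(\hat\alpha^{MFC},\bar x^{MFC})$, where the first inequality uses $\bar Q_t,\bar H\ge0$ together with consistency (the $\mu$-only quadratic terms can only decrease the payoff when $\mu$ has positive conditional variance, while all remaining $\mu$-dependent terms only see $\E[\mu]$ after conditioning), and the second is Lemma~\ref{MFC:strict_optimality}.

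For item~\ref{LQ:cce_MFC:prop_no_outperformance}: assume $\J(\lambda,\mu)\ge\J(\hat\alpha^{MFC},\bar x^{MFC})$. Combined with the chain of inequalities above, all inequalities must be equalities. Equality in the last step, $\J^{MFC}(\lambda)=\J^{MFC}(\hat\alpha^{MFC})$, forces $\lambda=\hat\alpha^{MFC}$ in $\mathbb H^2(\mathbb F)$ by the strict concavity/uniqueness part of Lemma~\ref{MFC:strict_optimality}. Equality in the first step, $\J(\lambda,\mu)=\J(\lambda,\E[X^\lambda])$, forces the $\bar Q$- and $\bar H$-quadratic variance terms to vanish, i.e.\ $\bar Q_t^{1/2}(\mu_t-\E[\mu_t])=0$ a.s.\ and $\bar H^{1/2}(\mu_T-\E[\mu_T])=0$ a.s.; more importantly, once $\lambda=\hat\alpha^{MFC}$, the consistency condition pins down $\mu_t=\E[X^{MFC}_t\vert\mu]$, and since $X^{MFC}$ is driven only by $\xi,W$ (the MFC control is $\mathbb F^1$-adapted feedback) which are independent of $\mu$, we get $\mu_t=\E[X^{MFC}_t]=\bar x^{MFC}_t$ deterministically. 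Hence $(\lambda,\mu)=(\hat\alpha^{MFC},\bar x^{MFC})$ and the payoffs agree.

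For item~\ref{item:mfc_not_cce}: argue by contraposition. If some mean field CCE attains $\J(\lambda,\mu)=\J(\hat\alpha^{MFC},\bar x^{MFC})$, then by item~\ref{LQ:cce_MFC:prop_no_outperformance} we have $(\lambda,\mu)=(\hat\alpha^{MFC},\bar x^{MFC})$, and being a mean field CCE it satisfies $\J(\hat\alpha^{MFC},\bar x^{MFC})=\J(\lambda,\mu)\ge\J(\beta,\mu)=\J(\beta,\bar x^{MFC})$ for every $\beta\in\mathbb H^2(\mathbb F^1)$; since $\bar x^{MFC}=\E[X^{MFC}]$ is exactly the flow of moments that makes $\hat\alpha^{MFC}$ a best response in the frozen-flow problem, $(\hat\alpha^{MFC},\bar x^{MFC})$ satisfies both conditions of Definition~\ref{MFG:definition}, i.e.\ it is a mean field NE — contradicting the hypothesis. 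The same argument shows directly that if the MFC solution \emph{were} a mean field CCE it would be a mean field NE. I expect the main obstacle to be the careful bookkeeping in the first inequality $\J(\lambda,\mu)\le\J(\lambda,\E[X^\lambda])$: one must check that every $\mu$-dependent term in \eqref{LQ_general:payoff} other than the two nonnegative quadratic forms in $\mu$ alone contributes identically whether one writes $\mu_t$ or $\E[\mu_t]=\E[X^\lambda_t]$, which is exactly where consistency ($\mu_t=\E[X_t\vert\mu]$, hence tower property on the bilinear terms $\innprod{\tilde Q_t X_t,\mu_t}$ and $\innprod{\tilde H X_T,\mu_T}$) is used, and to track the sign so that the variance penalty genuinely points the right way.
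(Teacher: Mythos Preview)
Your argument for item~\ref{item:mfc_not_cce} is essentially the same as the paper's and is fine. The problem is in item~\ref{LQ:cce_MFC:prop_no_outperformance}, specifically in the key inequality $\J(\lambda,\mu)\le\J(\lambda,\E[X^\lambda])$, which does not follow from Assumptions~\ref{LQ:assumptions}. You yourself flag the delicate point: after conditioning on $\mu$, the cross term $\E[\innprod{\tilde Q_t X_t,\mu_t}]$ becomes $\E[\innprod{\tilde Q_t\mu_t,\mu_t}]$, not $\innprod{\tilde Q_t\E[\mu_t],\E[\mu_t]}$. The net difference between $\J(\lambda,\mu)$ and $\J(\lambda,\E[\mu])$ in the running terms is therefore
\[
-\tfrac{1}{2}\,\E\!\big[\innprod{(\bar Q_t+2\tilde Q_t)(\mu_t-\E[\mu_t]),\,\mu_t-\E[\mu_t]}\big],
\]
and Assumptions~\ref{LQ:assumptions} impose no sign on $\bar Q_t+2\tilde Q_t$ (indeed, not even $\bar Q_t\ge0$ is assumed; only $\bar H\ge0$). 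In the abatement game of Section~\ref{sec:AG}, $\bar Q=b+\eps$ and $\tilde Q=-\eps$, so $\bar Q+2\tilde Q=b-\eps$, which is negative whenever $\eps>b$; in that regime your inequality points the wrong way and the chain $\J(\lambda,\mu)\le\J^{MFC}(\lambda)\le\J^{MFC}(\hat\alpha^{MFC})$ breaks at the first step.

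The paper avoids this obstacle by conditioning on $\mu$ rather than taking unconditional expectation. It writes $\J(\lambda,\mu)=\int\mathcal J(P^m)\,\rho(dm)$, where $\rho$ is the law of $\mu$ and $P^m$ is the conditional law of $(\xi,X^\lambda,\lambda)$ given $\mu=m$; consistency gives $\E^{P^m}[X^\lambda_t]=m_t$, so each $P^m$ is an admissible weak control for the MFC problem and $\mathcal J(P^m)\le\mathcal J(\hat P)$ by (weak) MFC optimality. This comparison is pathwise in $m$ and never invokes Jensen on the $\bar Q,\tilde Q$ terms, so no extra sign hypotheses are needed. From $\J(\lambda,\mu)\ge\mathcal J(\hat P)$ one then gets $\mathcal J(P^m)=\mathcal J(\hat P)$ for $\rho$-a.e.\ $m$, hence $P^m=\hat P$ by uniqueness, which forces $m=\bar x^{MFC}$ a.s.\ and finally $\lambda=\hat\alpha^{MFC}$. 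Your closing argument (once $\lambda=\hat\alpha^{MFC}$, use independence of $X^{MFC}$ from $\mu$ to make $\mu$ deterministic via consistency) is correct and matches the spirit of the paper's conclusion, but you need the disintegration step to reach $\lambda=\hat\alpha^{MFC}$ in the first place.
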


\begin{proof}[Proof of \ref{LQ:cce_MFC:prop_no_outperformance}]
By using the payoff functional $\J^{MFC}$ defined by \eqref{MFC:MFC_payoff}, the payoffs' inequality reads as
\begin{equation}\label{LQ:cce_MFC:absurd}
    \J(\lambda,\mu) \geq \J(\hat \alpha^{MFC},\bar x^{MFC}) = \J^{MFC}(\hat{\alpha}^{MFC}).
\end{equation}
We reformulate the MFC problem weakly, by taking advantages of the results of \cite[Paragraph 6.6]{librone_vol1}.
We define the set $\mathcal{A} \subseteq \mathcal{P}(\R^d \times \mathcal{C}([0,T];\R^d) \times L^2([0,T];\R^k))$ of admissible probability measures in the following way: take any filtered probability space $(\Omega,\F,\mathbb F,\prob)$ satisfying the usual assumptions, equipped with a $d$-dimensional $\mathbb F$-Brownian motion $W$ and an $\F_0$-measurable random variable $\xi$ independent of $W$.
Let $\alpha \in \mathbb H^2(\mathbb F)$, which we regard as random variable taking values in $L^2([0,T];\R^k)$.
Let $X = X^{\alpha}$ be the solution of
\begin{equation}\label{MKV_dynamics}
    dX^{\alpha}_t = (A_t X^{\alpha}_t + B_t\alpha_t) dt + dW_t, \quad X^{\alpha}_0 = \xi.
\end{equation}
Then, a probability measure $P$ belongs to $\mathcal{A}$ if $P = \prob \circ (\xi,X^{\alpha},\alpha)^{-1}$.
For any $P \in \mathcal{A}$, set $\bar{x}_t = \int_{\R^d} y (P \circ x_t^{-1})(dy)$.
By recalling the definitions of $f$ and $g$ in \eqref{MKV:running_final_costs}, define the payoff functional
\begin{equation}\label{MKV:J_weak}
\begin{aligned}
\mathcal{J} & (P) = \int_{\R^d \times \mathcal{C}([0,T],\R^d) \times L^2([0,T];\R^k)} \tonde{ \int_0^T f(t,x_t,\bar x_t,a)dt + g(x_T,\bar x_T) }P(dz,dx,da) \\
& = \E^{\prob}\quadre{ \int_0^T \tonde{ f(t,X_t,\E^{\prob}[X_t],\alpha_t) } dt + g(X_T,\E^{\prob}[X_T]) }.
\end{aligned}
\end{equation}
By \cite[Theorem 6.37]{librone_vol1}, there exists a probability measure $P^*$ in $\mathcal{A}$ so that 
\begin{equation}\label{MKV:weak_optimality}
    \mathcal{J}(P^*) \geq \mathcal{J}(P) \quad \forall P \in \mathcal{A}.
\end{equation}
Let $(\xi,X^{MFC},\hat{\alpha}^{MFC})$ be the MFC solution given by Proposition \ref{MKV:prop_mkv_solution} and let $\hat{P}$ be its law.
Let $(\lambda,\mu)$ be a mean field CCE and $(\xi,X^{\lambda},\lambda)$ be the corresponding initial state, state process and correlated strategy.
We show the following properties:
\begin{enumerate}
    \item \label{MKV:programme:minimum} The maximum point $P^*$ is unique and it is equal to $\hat{P}$.
    \item \label{MKV:programme:disintegration} For every $m$ in the support of $\mu$, there exists a version of the regular conditional probability of $(\xi,\lambda,X^{\lambda})$ given $\mu=m$; if we set $P^m = \prob( (\xi,\lambda,X^{\lambda}) \in \cdot \; \vert \; \mu = m)$, then $P^m$ belongs to $\mathcal{A}$, and it holds
    \[
    \J (\lambda,\mu) = \int_{\mathcal{C}([0,T];\R^d)} \mathcal{J}(P^m) \rho(dm),
    \]
    where $\rho$ denotes the law of $\mu$.

    \item \label{MKV:programme:equality_controls} We use the above equality to show that $\mu = \bar{x}^{MFC}$ $\prob$-a.s. and deduce $\lambda = \hat{\alpha}^{MFC}$ $d\prob \otimes dt$-a.e..
\end{enumerate}
As for point \ref{MKV:programme:minimum}, let $P^*$ be the admissible probability measure that maximizes $\mathcal{J}$.
Let $(\Omega^*,\F^*,\mathbb{F}^*,\prob^*)$, $W^*$, $\xi^*$, $\alpha^*$ and $X^*$ be so that $P^* = \prob^* \circ (\xi^*,\alpha^*,X^*)^{-1}$. By applying Proposition \ref{MKV:prop_mkv_solution} in this probability space, there exists an optimal control $\hat{\beta}$ which maximizes $\J$ over $\mathbb H^2(\mathbb F^*)$.
Since the flow of moments of $X^{\hat{\beta}}$ is still given by \eqref{MFC:solution:flow_of_moments} and \eqref{MFC:solution:state_process} admits a strong solution, we have $\prob^*\circ(\xi^*,X^{\hat{\beta}},\hat{\beta})^{-1}= \prob\circ(\xi, X^{MFC},\hat{\alpha}^{MFC})^{-1} = \hat{P}$.
Therefore, we can conclude that
\begin{align*}
\mathcal{J} & (P^*) = \E^{\prob^*}\quadre{ \int_0^T \tonde{ f(t,X^*_t,\E^{\prob^*}[X^*_t],\alpha^*_t) } dt + g(X^*_T,\E^{\prob^*}[X_T]) } \\
& \geq \E^{\prob^*}\quadre{ \int_0^T \tonde{ f(t,X^{\hat{\beta}}_t,\E^{\prob^*}[X^{\hat{\beta}}_t],\hat{\beta}_t) } dt + g(X^{\hat{\beta}}_T,\E^{\prob^*}[X^{\hat{\beta}}_T]) } = \mathcal{J}(\hat{P}),
\end{align*}
with the inequality being strict if $\beta^* \neq \hat{\beta}$.
This shows point \ref{MKV:programme:minimum}.

\medskip
As for point \ref{MKV:programme:disintegration}, we can suppose without loss of generality that $(\Omega,\F,\prob)$ is a Polish probability space.
We note that the state process $X^\lambda$ is adapted to the filtration generated by $\xi$, $W$ and $\lambda$, which is countably generated.
This implies that there exists a version of the regular conditional probability of $\prob$ given $\mu=m$, that we denote by $\prob^m$.
Since $\xi$ and $W$ are independent of $\mu$, it is straightforward to see that $W$ is a Brownian motion under $\prob^m$ as well, the law of $\xi$ under $\prob^m$ is $\nu$ and that $X^\lambda$ still satisfies equation \eqref{LQ_general:dynamics}.
Let $P^m = \prob^m\circ(\xi,X^\lambda,\lambda)^{-1}$ and observe that $P^m$ belongs to $\mathcal{A}$ for $\rho$-a.e $m$ in $\mathcal{C}([0,T],\R^d)$.
The consistency condition implies that $\E^{\prob^m}[X^\lambda_t] = m_t$ for $\rho$-a.e. $m$, which in turn implies that 
\begin{align*}
\J & (\lambda,\mu)= \E^{\prob}\quadre{ \int_0^T \tonde{ f(t,X^\lambda_t,\mu_t,\lambda_t) } dt + g(X_T,\mu_T) } \\
& = \E^{\prob}\quadre{ \E^{\prob}\quadre{\int_0^T \tonde{ f(t,X^\lambda_t,\mu_t,\lambda_t) } dt + g(X_T,\mu_T) \vert \mu }} \\
& = \int_{\mathcal{C}([0,T];\R^d)}  \E^{\prob^m}\quadre{ \int_0^T \tonde{ f(t,X^\lambda_t,\E^{\prob^m}[X^\lambda_t],\lambda_t) } dt + g(X_T,\E^{\prob^m}[X^\lambda_T]) dt }\rho(dm) \\
& = \int_{\mathcal{C}([0,T],\R^d)}\mathcal{J}(P^m)\rho(dm).
\end{align*}
By \eqref{LQ:cce_MFC:absurd} and \eqref{MKV:weak_optimality}, we have
\[
\int_{\mathcal{C}([0,T],\R^d)}(\mathcal{J}(\hat{P}) - \mathcal{J}(P^m))\rho(dm) \leq 0, \quad \mathcal{J}(\hat{P}) \geq \mathcal{J}(P^m),
\]
which implies $\mathcal{J}(\hat{P}) = \mathcal{J}(P^m)$ for $\rho$-a.e. $m$.
Since $\hat{P}$ is the unique maximizer of 
$\mathcal{J}$ by point \ref{MKV:programme:minimum}, we get $P^m = \hat{P}$ $\rho$-a.e..
In particular, this implies
\[
m_t=\E^{\prob^m}[X^\lambda_t] = \int_{\R^d} y (P^m \circ x_t)^{-1}(dy) = \int_{\R^d}y (\hat{P} \circ x_t)^{-1}(dy) = \E[X^{MFC}_t] = \bar{x}^{MFC}_t \: \text{for $\rho$-a.e. $m$.}
\]
Thus, $\mu$ is a.s. equal to $\bar{x}^{MFC}$, so that the consistency condition \eqref{LQ:def_cce:cons} for the mean field CCE $(\lambda,\mu)$ rewrites as $\bar{x}^{MFC}_t = \E[X^\lambda_t]$.
Therefore, we have
\[
\J(\lambda,\mu) = \E^{\prob}\quadre{ \int_0^T \tonde{ f(t,X^\lambda_t,\E[X^\lambda_t],\lambda_t) } dt + g(X_T,\E[X^\lambda_T]) } = \J^{MFC}(\lambda).
\]
Since, by Lemma \ref{MFC:strict_optimality}, $\hat{\alpha}^{MFC}$ is unique, the previous equality implies that $\lambda$ is equal to $\hat{\alpha}^{MFC}$ $d\prob\otimes dt$-a.e., which concludes the proof.
\end{proof}

\begin{proof}[Proof of \ref{item:mfc_not_cce}]
Let us assume that the MFC solution $(\hat \alpha^{MFC}, \bar x^{MFC})$ is not a mean field NE (see upcoming Definition \ref{MFG:definition}).
By item  \ref{LQ:cce_MFC:prop_no_outperformance} of Theorem \ref{thm:no_outperf_over_mfc}, every mean field CCE yields a lower payoff than the MFC solution; moreover, if there was a mean field CCE yielding the same payoff as the MFC solution, it would be the MFC solution itself.
Therefore, we just need to prove that the  MFC solution is not a mean field CCE.

The pair $(\hat{\alpha}^{MFC},\bar{x}^{MFC})$ is a correlated flow which satisfies the consistency condition in the definition of mean field CCE.
Moreover, since $\hat{\alpha}^{MFC}\in \mathbb H^2(\mathbb F^1)$ and $\bar x^{MFC}$ is deterministic, it satisfies the consistency condition of the definition of the mean field NE as well.
Since by assumption the MFC solution is not a mean field NE, it is the optimality condition \eqref{MFG:definition:opt} in the upcoming definition of mean field NE which is not satisfied. Therefore, there exists $\beta \in \mathbb H^2(\mathbb F^1)$ so that $\J(\beta,\bar x^{MFC}) >  \J(\hat \alpha^{MFC},\bar x^{MFC})$.
Since such $\beta$ is an admissible deviation to the correlated flow $(\hat{\alpha}^{MFC},\bar{x}^{MFC})$, the optimality condition  \eqref{LQ:def_cce:opt} in definition of mean field CCE is not satisfied either. This means that the MFC solution is not a mean field CCE.
\end{proof}

\subsection{Comparison with mean field Nash equilibria}
As for mean field NE, we first show that the only mean field CCE with deterministic flow of moments is the mean field NE itself.
In particular, this implies that randomization of the flow of moments is needed for mean field CCEs to reach higher payoffs than the mean field NE.
Differently from the MFC case, no general outperformance result can be established for mean field CCEs.
Instead, one can derive an outperformance condition for correlated flows in the class $\mathcal G$, in a similar approach as for the optimality condition in subsection \ref{sec:optimality_condition}.

As shown by next proposition, there exists a unique mean field NE.
The proof is a standard application of the Pontryagin maximum principle approach together with the fixed point argument of \cite[Chapter 4]{librone_vol1}.
We include it in the \nameref{sec:appendix} for the sake of completeness.

\begin{prop}\label{LQ:MFG:prop_mfg_solution}
Let $\phi^{NE}$ and $\theta^{NE}$ the solutions of the following system:
\begin{equation}\label{MFG:fixed_point_riccati}
\left \{ \begin{aligned}
    & \dot{\phi}^{NE}_t + \phi^{NE}_t A + A^\top \phi^{NE}_t + (Q + \Tilde{Q}) -(\phi^{NE}_t B  + S^\top) R^{-1} (B^\top \phi^{NE}_t + S)  = 0, \\
    & \phi^{NE}_T = H + \Tilde{H}, \\
    & \dot{\theta}^{NE}_t + A^\top \theta^{NE}_t + q - (\phi^{NE}_t B  + S^\top) R^{-1} ( B^\top \theta^{NE}_t + r_t ) = 0, \\
    & \theta^{NE}_T = 0.
\end{aligned} \right.
\end{equation}
Define $\hat{A}$ and $\hat{B}$ as
\begin{equation}\label{MFG:flow_of_moments:coefficients}
\begin{aligned}
    & \hat{A}_t = A_t - B_tR_t^{-1}B_t \phi^{NE}_t -B_tR_t^{-1}S,  && \hat{B}_t =B_tR_t^{-1}(B_t^\top\theta^{NE}_t + r). 
\end{aligned}
\end{equation}
Let $\theta^{\hat{m}}$ be the solution of the following equation:
\begin{equation}\label{MFG:riccati_control}
    \dot{\theta}^{\hat{m}}_t -\psi_t\frac{d\hat{m}_t}{dt} + A_t^\top\theta^{\hat{m}}_t + q_t  - (\phi_t B_t + S^\top)R_t^{-1} (B_t^\top\theta^{\hat{m}}_t + r_t) = 0, \quad  \theta^{\hat{m}}_T = 0.
\end{equation}
Let $\phi$ be the solution of the matrix Riccati equation in \eqref{LQ:riccati_eq}.
There exists a unique mean field NE $(\hat{\alpha},\hat{m})$, which is given by
\begin{subequations}\label{MFG:solution}
\begin{align}
    & \dot{\hat{m}}_t = (A_t - B_tR_t^{-1}B_t \phi^{NE}_t -B_tR_t^{-1}S )\hat{m}_t -B_tR_t^{-1}(B_t^\top\theta^{NE}_t + r), \quad \hat{m}_0 = \nu_1, \label{MFG:solution:flow_of_moments} \\
    & \hat{\alpha}_t = -R_t^{-1} (( B^\top \phi_t + S)X^{NE}_t +  B^\top \psi_t \hat{m}_t + B^\top \theta^{\hat{m}}_t + r_t), \label{MFG:solution:control}
\end{align}
\end{subequations}
where $X^{NE}$ is the solution of equation \eqref{LQ_general:dynamics} with the control $\hat{\alpha}$.
\end{prop}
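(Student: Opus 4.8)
The plan is to run the classical two–step scheme for linear–quadratic mean field games — solve the representative player's problem against a \emph{fixed} deterministic flow, then close the loop with the consistency condition through a fixed-point argument — following \cite[Chapter 4]{librone_vol1}. \textbf{Step 1 (individual optimization).} Fix $m \in \mathcal{C}([0,T];\R^d)$ and maximize $\J(\beta,m)$ over $\beta \in \mathbb H^2(\mathbb F^1)$. Up to the uncontrolled terms $\int_0^T(\innprod{L_t,m_t}-\frac{1}{2}\innprod{\bar Q_t m_t,m_t})dt-\frac{1}{2}\innprod{\bar H m_T,m_T}$, this is exactly the strictly concave LQ problem already treated in Proposition \ref{LQ_general:prop_best_dev}, now with the deterministic flow $m_t$ in place of $\E[\mu_t]$; strict concavity (hence existence and uniqueness of the maximizer) follows from Assumptions \ref{LQ:assumptions}, in particular $R_t \geq d_2 I_k > 0$, $Q_t,H\geq 0$ and $\sup_{t\in[0,T]}|S_t|^2<d_1d_2$, which make the running payoff jointly strictly concave in $(X_t,\beta_t)$ by a Schur-complement argument. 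The stochastic Pontryagin principle, decoupling the adjoint process via the affine ansatz $Y_t=-\phi_tX_t-\psi_tm_t-\theta^m_t$, yields $\phi,\psi$ as the solutions of the first two equations of \eqref{LQ:riccati_eq} and $\theta^m$ as the solution of \eqref{MFG:riccati_control}, together with the optimal feedback $\hat\alpha^m_t=-R_t^{-1}((B_t^\top\phi_t+S_t)X_t+B_t^\top\psi_t m_t+B_t^\top\theta^m_t+r_t)$. Here $\phi$ is the symmetric matrix Riccati equation, globally solvable on $[0,T]$ under $R_t>0$ and $Q_t,H\geq 0$, while $\psi$ and $\theta^m$ solve linear ODEs with integrable coefficients.

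\textbf{Step 2 (consistency / fixed point).} We seek $\hat m$ with $\hat m_t=\E[X^{\hat\alpha^{\hat m}}_t]$. Taking expectations in the state SDE driven by $\hat\alpha^{\hat m}$ and coupling the resulting equation with \eqref{MFG:riccati_control} gives a linear forward–backward ODE system for $(\hat m,\theta^{\hat m})$ on $[0,T]$; equivalently, averaging the Pontryagin FBSDE and decoupling $\E[Y_t]=-\phi^{NE}_t\E[X_t]-\theta^{NE}_t$ produces the Riccati pair \eqref{MFG:fixed_point_riccati} for $\phi^{NE},\theta^{NE}$, after which $\hat m=\E[X^{NE}]$ is forced to solve \eqref{MFG:solution:flow_of_moments}. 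The substantive point is the global-in-time solvability of this coupled system — equivalently, of $\phi^{NE}$ on all of $[0,T]$ — which is not immediate since $Q_t+\tilde Q_t$ need not be sign-definite; it is obtained from the mean field game fixed-point argument of \cite[Chapter 4]{librone_vol1}, which exploits the strict concavity of the individual problem from Step 1 to show that the affine fixed-point map $m\mapsto\E[X^{\hat\alpha^m}]$ admits a unique fixed point $\hat m$.

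\textbf{Step 3 (verification and uniqueness).} Put $\hat\alpha:=\hat\alpha^{\hat m}$. Consistency \eqref{MFG:definition:cons} holds by the choice of $\hat m$, and optimality \eqref{MFG:definition:opt} holds because $\hat\alpha^{\hat m}$ maximizes $\J(\cdot,\hat m)$ by Step 1; feeding $\theta^{\hat m}$ and $\hat m$ into the feedback of Step 1 reproduces \eqref{MFG:solution:control}, so $(\hat\alpha,\hat m)$ is the claimed mean field NE. For uniqueness, any mean field NE $(\hat\alpha',\hat m')$ must satisfy $\hat\alpha'=\hat\alpha^{\hat m'}$ by the uniqueness of the maximizer in Step 1, so $\hat m'$ solves the fixed-point equation of Step 2 and therefore equals $\hat m$, forcing $\hat\alpha'=\hat\alpha$. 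I expect the only genuinely delicate ingredient to be the global well-posedness of the coupled forward–backward ODE system (equivalently of the Riccati matrix $\phi^{NE}$) in Step 2; Step 1 is essentially already contained in Proposition \ref{LQ_general:prop_best_dev}, and the rest is routine linear–quadratic bookkeeping.
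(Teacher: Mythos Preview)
Your proposal is correct and follows essentially the same approach as the paper: Pontryagin maximum principle for the individual problem, the affine ansatz on the adjoint to produce $(\phi,\psi,\theta^m)$, then averaging the FBSDE and decoupling via $\phi^{NE},\theta^{NE}$ to obtain the fixed point $\hat m$, all under the umbrella of \cite[Chapter 4]{librone_vol1}. The only cosmetic differences are organizational (you first optimize against a frozen $m$ and then close the loop, whereas the paper writes the coupled FBSDE with the consistency built in and only then averages) and a sign convention in the ansatz; you also make explicit the one genuinely delicate point --- global solvability of the $\phi^{NE}$ Riccati equation, since $Q+\tilde Q$ is not assumed sign-definite --- which the paper leaves implicit in its reference to \cite{librone_vol1}.
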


We observe that, by definition, a mean field NE is a mean field CCE with deterministic flow of measures $\hat{m}$.
The converse is true as well, as shown by the following Theorem:

\begin{thm}\label{LQ_general:no_deterministic_cce}
Let $(\lambda, \mu)$ be a mean field CCE with deterministic $\mu$. Then, $(\lambda, \mu)$ is  the mean field NE.
\end{thm}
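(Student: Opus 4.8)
The plan is to show that when $\mu$ is deterministic there is nothing for the correlation device to exploit, so $\lambda$ can be projected onto $\mathbb F^1$ without changing the payoff, and the resulting pair satisfies the two defining conditions of a mean field NE, whence the claim follows from uniqueness (Proposition \ref{LQ:MFG:prop_mfg_solution}). First, since $\mu$ is deterministic, $\E[X_t\mid\mu]=\E[X_t]$, so the consistency condition \eqref{LQ:def_cce:cons} of the CCE reduces to $\mu_t=\E[X^\lambda_t]$ for every $t$. Next, I would set $\tilde\lambda_t:=\E[\lambda_t\mid\F^1_t]$; exactly as in the proof of Lemma \ref{MFC:strict_optimality}, this admits an $\mathbb F^1$-progressively measurable version, lies in $\mathbb H^2(\mathbb F^1)$ by conditional Jensen, and the associated state $\tilde X:=X^{\tilde\lambda}$ satisfies $\tilde X_t=\E[X^\lambda_t\mid\F^1_t]$ (using that, for $s\le t$, $W_t-W_s$ is independent of $\F_s$, so $\E[\lambda_s\mid\F^1_t]=\tilde\lambda_s$). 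In particular $\E[\tilde X_t]=\E[X^\lambda_t]=\mu_t$, so $(\tilde\lambda,\mu)$ satisfies the consistency condition \eqref{MFG:definition:cons} of a mean field NE.

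It then remains to obtain the NE optimality of $(\tilde\lambda,\mu)$ and to identify $\lambda$ with $\tilde\lambda$. I would condition the running payoff of $\J(\lambda,\mu)$ at time $t$ on $\F^1_t$ (and the terminal payoff on $\F^1_T$): the terms depending only on $\mu$ are deterministic constants; the terms $\innprod{\tilde Q_tX_t,\mu_t}$, $\innprod{q_t,X_t}$, $\innprod{r_t,\lambda_t}$ and $\innprod{\tilde HX_T,\mu_T}$ are affine in $(X,\lambda)$ and pass to $(\tilde X,\tilde\lambda)$ with equality; and the remaining running and terminal costs $-\tfrac12\innprod{Q_tX_t,X_t}-\innprod{S_tX_t,\lambda_t}-\tfrac12\innprod{R_t\lambda_t,\lambda_t}$ and $-\tfrac12\innprod{HX_T,X_T}$ are concave in $(X,\lambda)$ by Assumptions \ref{LQ:assumptions}, the relevant point being that the block matrix $\begin{pmatrix}Q_t&S_t^\top\\S_t&R_t\end{pmatrix}$ is positive semidefinite (a Schur-complement estimate using $\sup_t|S_t|^2<d_1d_2$ when $d_1>0$, and trivially when $d_1=0$ since then $S\equiv0$). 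Conditional Jensen then gives $\J(\lambda,\mu)\le\J(\tilde\lambda,\mu)$, while the CCE optimality \eqref{LQ:def_cce:opt} applied to the admissible deviation $\tilde\lambda\in\mathbb H^2(\mathbb F^1)$ gives the reverse inequality; hence $\J(\lambda,\mu)=\J(\tilde\lambda,\mu)$, so $\tilde\lambda$ maximizes $\J(\cdot,\mu)$ over $\mathbb H^2(\mathbb F^1)$, i.e. \eqref{MFG:definition:opt} holds with $\hat m=\mu$. Combined with the consistency above, $(\tilde\lambda,\mu)$ is a mean field NE, hence equals $(\hat\alpha,\hat m)$ by Proposition \ref{LQ:MFG:prop_mfg_solution}. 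Finally, the equality $\J(\lambda,\mu)=\J(\tilde\lambda,\mu)$ forces equality in the Jensen step; since $R_t\ge d_2I_k$ with $d_2>0$ (and, when $d_1>0$, the block matrix is positive definite so the cost is strictly concave jointly in $(X,\lambda)$), this yields $\lambda_t=\tilde\lambda_t$ for a.e. $t$, $\prob$-a.s., and then $X^\lambda$ solves the same equation as $\tilde X$, so $X^\lambda=\tilde X$. Therefore $(\lambda,\mu)=(\tilde\lambda,\mu)=(\hat\alpha,\hat m)$ up to modification, i.e. $(\lambda,\mu)$ is the mean field NE.

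The main obstacle is the bookkeeping in the conditioning step: on the one hand one must check carefully that projecting onto $\mathbb F^1$ sends the controlled state $X^\lambda$ to $X^{\tilde\lambda}$ (the independence argument for the Brownian increments, borrowed from Lemma \ref{MFC:strict_optimality}); on the other hand one must isolate exactly which terms of $\J$ are merely affine and which are strictly concave. The key structural fact is that, $\mu$ being deterministic, the quadratic control cost is the only strictly concave term, and this is precisely what upgrades the equality of payoffs into the pointwise identity $\lambda=\hat\alpha$.
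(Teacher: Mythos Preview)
Your proposal is correct and follows essentially the same route as the paper: project $\lambda$ onto $\mathbb F^1$ via $\tilde\lambda_t=\E[\lambda_t\mid\F^1_t]$, use concavity and conditional Jensen to get $\J(\lambda,\mu)\le\J(\tilde\lambda,\mu)$, combine with the CCE optimality to obtain equality and conclude that $(\tilde\lambda,\mu)$ is a mean field NE, hence the unique one by Proposition~\ref{LQ:MFG:prop_mfg_solution}. The only cosmetic difference is in the last step: the paper first records (via the same projection argument, as in Lemma~\ref{MFC:strict_optimality}) that $\hat\alpha$ is the \emph{unique} maximizer of $\J(\cdot,\hat m)$ over the larger class $\mathbb H^2(\mathbb F)$ and then invokes this uniqueness directly, whereas you extract $\lambda=\tilde\lambda$ from equality in the Jensen step using strict concavity in the control variable; both are the same idea. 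One small clarification: your parenthetical about the case $d_1>0$ is unnecessary---since $R_t\ge d_2 I_k$ with $d_2>0$, the Jensen deficit in the $R$-term alone already forces $\lambda_t=\tilde\lambda_t$ for a.e.\ $t$, regardless of whether the full block matrix is positive definite.
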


\begin{proof}
We start by observing that, by using the same concavity and projections arguments as in the proof of Lemma \ref{MFC:strict_optimality}, we have
\[
\J(\hat{\alpha},\hat{m}) > \J(\beta,\hat{m}) \quad \forall \beta \in \mathbb H^2(\mathbb F), \; \beta \neq \hat{\alpha}.
\]
Let $(\lambda,\mu)$ be a coarse correlated equilibrium with deterministic flow of moments $\mu$.
Then, the consistency condition \eqref{LQ:def_cce:cons} becomes $\mu_t = \E[X^\lambda_t]$ for every time $t$.
By optimality, it holds $\J(\lambda,\mu) \geq \J(\beta,\mu)$ for every $\beta \in \mathbb H^2(\mathbb F^1)$.
By reasoning as in the proof of Lemma \ref{MFC:strict_optimality}, there exists a strategy $\Tilde{\lambda} \in \mathbb H^2(\mathbb F^1)$ so that
\[
\Tilde{\lambda}_t = \E[\lambda_t \vert \F^1_t], \quad X^{\Tilde{\lambda}}_t = \E[ X^\lambda_t \vert \F^1_t ], \quad \prob\text{-a.s.}, \: \forall \; t \in [0,T],
\]
where $X^{\Tilde{\lambda}}$ is the solution of equation \eqref{LQ_general:dynamics} corresponding to the strategy $\Tilde{\lambda}$.
Since $\mu$ is deterministic, by exploiting the convex linear quadratic structure of the payoff functional $\J'$, we have
\begin{equation}\label{LQ_general:no_deterministic_cce:inequalities}
    \J(\beta,\mu) \leq \J(\lambda,\mu) \leq \J(\Tilde{\lambda},\mu), \quad \forall \: \beta \in \mathbb H^2(\mathbb F^1). 
\end{equation}
Since $\mu$ is deterministic by assumption, the consistency condition holds true for the correlated flow $(\Tilde{\lambda},\mu)$ as well, so that \eqref{LQ_general:no_deterministic_cce:inequalities} implies that $(\Tilde{\lambda},\mu)$ is itself a mean field NE.
By uniqueness of the mean field NE, we deduce $(\Tilde{\lambda},\mu) = (\hat{\alpha},\hat{m})$, so that in particular $\mu=\hat{m}$ $\prob$-a.s..
Since $\hat{\alpha}$ is the unique maximizer of $\J(\cdot,\hat{m})$ over $\mathbb H^2(\mathbb F)$, we deduce $\J(\hat{\alpha},\hat{m}) \geq \J(\lambda,\hat{m})$.
Since $(\lambda,\mu)$ is a mean field CCE by assumption, it holds $\J(\hat{\alpha},\hat{m}) \leq \J(\lambda,\hat{m})$, which, by uniqueness, implies that $\lambda = \hat{\alpha}$ $dt\otimes\prob$-a.e. as well.
\end{proof}

Finally, consider again correlated flows $(\lambda,\mu)$ in the class $\mathcal{G}$. By using their specific structure as described in \eqref{LQ:corr_flow}, we are able to provide a condition under which they yield a higher payoff than the mean field NE.
\begin{thm}\label{LQ_general:cce_mfg_comparison}
Let $\theta^{\hat{m}}$ be the solution of \eqref{MFG:riccati_control}. Set
\begin{equation}\label{LQ_general:cce_mfg:auxiliary_theta}
\begin{aligned}
    & \Theta^{\hat{m}}_t = R_t^{-1}(B^\top\theta^{\hat{m}}_t + r_t).
\end{aligned}
\end{equation}
Let $(\lambda,\mu) \in \mathcal G$ corresponding to some $\delta \in \mathbb H^2(\F_0)$.
Then, $\J(\lambda,\mu)$ is higher than the payoff $\J(\hat{\alpha},\hat{m})$ given by the mean field NE if and only if the following inequality is satisfied:
\begin{equation}\label{LQ:cce_mfg_outperformance:final}
\begin{aligned}
\int_0^T & \bigg( \frac{1}{2}(\innprod{\Bar{Q} \hat{m}_t, \hat{m}_t} - \E[\innprod{ \Bar{Q} \mu_t, \mu_t  }]) + \frac{1}{2} ( \innprod{M_t \hat{m}_t,\hat{m}_t} - \E[\innprod{M_t \mu_t,\mu_t}]) \\
& + \frac{1}{2}(\innprod{G_t \hat{m}_t,\hat{m}_t} - \E[\innprod{G_t \mu_t,\mu_t}])  + \innprod{N_t \hat{m}_t, \hat{m}_t } - \E[\innprod{N_t \mu_t, \mu_t }] \\
&  + \innprod{L_t - q_t +R_t^{-1}(B^\top_t(\phi_t+\psi_t)+S_t)^\top r_t,  \E[\mu_t] -\hat{m}_t } + \innprod{B_t^\top\phi_t\hat{m}_t, \Theta^{\hat{m}}_t  }  \\
& - \E[\innprod{B_t^\top\phi_t\mu_t,R_t ^{-1}\delta_t  }]  - \E[\innprod{B_t^\top\psi_t \mu_t,R_t ^{-1}\delta_t }]  \\
& + \innprod{ B_t^\top\psi_t\hat{m}_t,\Theta^{\hat{m}}_t }  + \frac{1}{2}(\innprod{R_t\Theta^{\hat{m}}_t,\Theta^{\hat{m}}_t } - \E[\innprod{R^{-1}_t\delta_t ,\delta_t }]) - \innprod{r_t, \Theta^{\hat{m}}_t - R_t ^{-1}\E[\delta_t]} \bigg)dt \\
& +\frac{1}{2}(\innprod{\Bar{H}\hat{m}_T,\hat{m}_T} -\E[\innprod{\Bar{H}\mu_T,\mu_T}])  +\frac{1}{2}(\innprod{H\hat{m}_T, \hat{m}_T} - \E[\innprod{H \mu_T, \mu_T }]) \\
& + \innprod{\Tilde{H}\hat{m}_T, \hat{m}_T} - \E[\innprod{\Tilde{H} \mu_T, \mu_T }] \geq 0.
\end{aligned}
\end{equation}
\end{thm}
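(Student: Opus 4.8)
The plan is to compute the difference $\J(\lambda,\mu)-\J(\hat\alpha,\hat m)$ in closed form and to recognise it as the left-hand side of \eqref{LQ:cce_mfg_outperformance:final}; the asserted equivalence is then immediate. As a first step I would separate in the payoff \eqref{LQ_general:payoff} the ``reward to the population'' part from the ``cost'' part, writing $\J(\lambda,\mu)=\E[\int_0^T(\innprod{L_t,\mu_t}-\frac12\innprod{\bar Q_t\mu_t,\mu_t})\,dt-\frac12\innprod{\bar H\mu_T,\mu_T}]-\J'(\lambda,\mu)$ with $\J'$ the functional \eqref{LQ_general:cost_functional}, and likewise $\J(\hat\alpha,\hat m)=\int_0^T(\innprod{L_t,\hat m_t}-\frac12\innprod{\bar Q_t\hat m_t,\hat m_t})\,dt-\frac12\innprod{\bar H\hat m_T,\hat m_T}-\J'(\hat\alpha,\hat m)$. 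Unlike in Theorem~\ref{LQ:prop:optimality_condition}, where the two compared payoffs carried the same flow and these terms dropped out, here $\mu\neq\hat m$, so they survive and produce exactly the $L$-, $\bar Q$- and $\bar H$-contributions of \eqref{LQ:cce_mfg_outperformance:final}.

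Next I would insert the feedback forms. For $(\lambda,\mu)\in\mathcal G$ we have $\lambda_t=-(\Phi_tX_t+\Psi_t\mu_t+R_t^{-1}\delta_t)$, so $\J'(\lambda,\mu)$ is precisely the expression already displayed in the proof of Theorem~\ref{LQ:prop:optimality_condition}. For the mean field NE, Proposition~\ref{LQ:MFG:prop_mfg_solution} gives $\hat\alpha_t=-(\Phi_tX^{NE}_t+\Psi_t\hat m_t+\Theta^{\hat m}_t)$, and since $\hat m$ is deterministic, expanding $\J'(\hat\alpha,\hat m)$ by the same algebra returns that same expression with $X_t$ replaced by $X^{NE}_t$, $\mu_t$ by $\hat m_t$ and $R_t^{-1}\delta_t$ by $\Theta^{\hat m}_t$, using the identities $R_t\Phi_t-S_t=B_t^\top\phi_t$, $R_t\Psi_t=B_t^\top\psi_t$ of \eqref{LQ_general:optimality:final_identities} and the matrices $M_t,N_t,G_t$ of \eqref{LQ:auxiliary_coefficients}. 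After subtraction, $\J(\lambda,\mu)-\J(\hat\alpha,\hat m)$ is the expectation of a quantity quadratic in $(X_t,X^{NE}_t,\mu_t,\hat m_t,\delta_t,\Theta^{\hat m}_t)$, plus quadratic terminal terms.

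The decisive point is a comparison of the two state processes. For \emph{any} $(\lambda,\mu)\in\mathcal G$, by \eqref{LQ:lemma_consistency:difference} the process $X_t-\mu_t$ solves the linear SDE $dZ_t=(A_t-B_t\Phi_t)Z_t\,dt+\sigma_t\,dW_t$ with $Z_0=\xi-\nu_1$, \emph{independently of the choice of} $\delta$; and, using the consistency relation $\hat m_t=\E[X^{NE}_t]$ and the feedback form of $\hat\alpha$, one checks directly that $X^{NE}_t-\hat m_t$ solves the same SDE with the same initial datum. Hence
\[
X_t-\mu_t=X^{NE}_t-\hat m_t\qquad\prob\text{-a.s. for all }t\in[0,T].
\]
Writing $X_t=\mu_t+(X^{NE}_t-\hat m_t)$ and using that $X^{NE}_t-\hat m_t$ is independent of $(\mu,\delta)$ with zero mean, together with the consistency identities $\E[X_t\mid\mu]=\mu_t$ and $\hat m_t=\E[X^{NE}_t]$, one sees that every genuinely stochastic quadratic-in-state contribution cancels between the two payoffs; the surviving state-quadratic terms collapse into quadratic forms in $\mu_t$ and in $\hat m_t$ (and their terminal analogues), matching the $M$-, $N$-, $G$-, $H$- and $\tilde H$-terms of \eqref{LQ:cce_mfg_outperformance:final}. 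For the mixed terms $\innprod{B_t^\top\phi_tX_t,R_t^{-1}\delta_t}$ and $\innprod{B_t^\top\psi_tX_t,R_t^{-1}\delta_t}$ I would invoke the $\mu$-equation in \eqref{LQ:corr_flow}, which shows that $B_tR_t^{-1}\delta_t$ — and therefore $\phi_tB_tR_t^{-1}\delta_t$ and $\psi_tB_tR_t^{-1}\delta_t$ — is $\sigma(\mu)$-measurable, so that conditioning on $\mu$ turns $X_t$ into $\mu_t$; the terms pairing $X^{NE}_t$ with $\Theta^{\hat m}_t$ or with $\delta_t$ are reduced via independence and $\E[X^{NE}_t]=\hat m_t$. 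Collecting and rearranging then gives $\J(\lambda,\mu)-\J(\hat\alpha,\hat m)$ equal to the left-hand side of \eqref{LQ:cce_mfg_outperformance:final}, which proves the claim.

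I expect the only real difficulty to be the length of this bookkeeping; conceptually the argument runs parallel to the proof of Theorem~\ref{LQ:prop:optimality_condition}, the one genuinely new ingredient being the pathwise identity $X_t-\mu_t=X^{NE}_t-\hat m_t$, which strips the comparison of all idiosyncratic randomness and is what makes the final criterion depend only on the joint law of $(\mu,\delta)$ and the deterministic objects $\hat m$ and $\Theta^{\hat m}$.
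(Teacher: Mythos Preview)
Your proposal is correct and follows essentially the same route as the paper: split $\J$ into the $(\bar Q,\bar H,L)$-reward part and the cost $\J'$, expand both $\J'(\lambda,\mu)$ and $\J'(\hat\alpha,\hat m)$ via the feedback forms and the auxiliary matrices $M,N,G$, and reduce everything to $(\mu,\delta,\hat m,\Theta^{\hat m})$ using the key identity linking the two state processes. The one presentational difference is how that identity is obtained: the paper introduces an auxiliary function $f^{\hat m,\mu}$ solving an ODE, identifies it with $X_t-X^{NE}_t$, and then uses $\sigma(\mu)$-measurability together with consistency to conclude $f^{\hat m,\mu}_t=\mu_t-\hat m_t$; you instead observe directly that $X_t-\mu_t$ and $X^{NE}_t-\hat m_t$ solve the \emph{same} linear SDE with the same initial datum $\xi-\nu_1$, hence coincide pathwise, which is the same identity rearranged and obtained without the intermediate conditioning step.
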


\medskip
The proof is similar to the one of Theorem \ref{LQ:prop:optimality_condition}. For the sake of completeness, we include it in the \nameref{sec:appendix}.
We observe that, although the inequality \eqref{LQ:cce_mfg_outperformance:final} is not easy to interpret, it involves only the law of $\mu$ and its associated $\delta$.
Moreover, it can be verified separately from the optimality condition \eqref{LQ:cce:final_optimality}, giving some room for mean field CCEs to outperform the mean field NE payoff.
This will be accomplished for the abatement game in Section \ref{sec:AG}.

\section{Application to an emission abatement game}\label{sec:AG}

In this section we consider an emission abatement game inspired by environmental economics literature on international environmental agreements, in line with the very popular model of \cite{barrett1994self}. Previous section's findings allow us to exhibit a simple class of coarse correlated equilibria which (highly) outperforms the mean field NE in this game. 

The emission abatement game has the following payoff and dynamics of the representative player state:
\begin{subequations}\label{ag:setting}
\begin{align}
\J(\alpha,\mu) & = \E\quadre{\int_0^T\tonde{ a\mu_t -\frac{b}{2}\mu_t^2 -\frac{1}{2}\alpha_t^2 -\frac{\eps}{2}(\mu_t - X_t)^2 }dt },\label{ag:payoff}\\
dX_t & = \alpha_tdt+dW_t, \quad X_0 = \xi,\label{ag:state_var}
\end{align}
\end{subequations}
with $a,b$ non-negative constants and $\eps >0$. The strategy $\alpha_t$ represents the abatement rate of the player at time $t$, while $X_t$ models the cumulated abatement over the interval $[0,t]$. 

We translate a slightly modified version of the abatement game \cite{barrett1994self} into a dynamic stochastic mean field game. We follow the $N$-player formulation of \cite{DokkaTrivikram2022Edia} by considering symmetric players, and a normalization of the number of players is implicitly added by replacing the sum of abatement efforts by the flow of moments $\mu$.
We also add the last term in $\eps$, inspired by further developments of this model in the literature (see \cite{gruning2010can}), which can be interpreted as a reputational cost. It appears to be necessary when one wants mean field CCEs outperforming the mean field NE at the mean field limit. Indeed, when $\eps = 0$, there exists only a unique mean field CCE, corresponding to the mean field NE.
This is straightforward by direct computations and can be also deduced from Proposition \ref{ag:cor:optimality_condition} (see upcoming Remark \ref{ag:rem:no_mean_field_interaction}).

Following \cite{barrett1994self}, the other terms of the payoff can be interpreted as follows. The term $a\mu_t - \frac{b}{2} \mu_t^2$, which depends solely on the mean field component $\mu$, is the ``abatement benefit''. It represents the individual benefit of global climate change mitigation allowed by aggregate abatement efforts, with a decreasing marginal benefit. The quadratic term in the control, i.e. $- \frac{1}{2}\alpha_t^2$, is an ``abatement cost'' that the representative country privately pays for its abatement effort.

We do not claim that a mean field approximation of the abatement game of \cite{barrett1994self} is a right way to approach the problem of international environmental agreements economically. We rather use this payoff functional as a toy example that allows us to illustrate very efficiently the interest of mean field CCEs in a context of common good, and to contribute to the findings of \cite{DokkaTrivikram2022Edia}.

\begin{remark}
Going from static to dynamic games also induces some additional assumptions that were not included in reference models \cite{barrett1994self,DokkaTrivikram2022Edia}.
We chose to  represent the ``abatement benefit'' as a running payoff rather than a terminal one, considering that environmental objectives are not only to reach a given level of emissions at a terminal time, but also to abate as much as possible, as early as possible.
\end{remark}

\subsection{Translation and interpretation of findings in the abatement game}
In this subsection we apply the theory developed in the previous section to compute mean field CCEs in the abatement game.
In the next subsection, we will make a step further and exhibit a simple but interesting subclass of correlated flows $(\lambda,\mu)$ which verify both the optimality inequality \eqref{LQ:cce:final_optimality} and the NE outperformance inequality \eqref{LQ:cce_mfg_outperformance:final}.

\medskip
We use the setting of Section \ref{sec:setting} with $d=k=1$. 
The parameters are given by
\begin{equation}\label{ag:param}
A_t = 0,\; B_t = 1,\; \sigma_t = 1,\;  L_t = a,\; \bar Q_t = b + \eps,  \; Q_t = \eps,\; \tilde Q_t = -\eps,\; R_t = 1,\quad  \forall t\in [0,T],
\end{equation}
and remaining parameters equal 0. According to Proposition \ref{LQ_general:prop_best_dev} with the abatement game parameters as in \eqref{ag:param}, for a given correlated flow $(\lambda,\mu)$,  the best deviating strategy and the corresponding state process are given by
\begin{equation}\label{ag:best_deviation}
\begin{aligned}
    & \hat{\beta}_t = \phi_t(\E[\mu_t]-\hat{X}_t)-\theta_t, \\
    & d\hat{X}_t = \hat{\beta}_t dt + dW_t, \quad \hat{X}_0 = \xi,
\end{aligned}
\end{equation}
with $\phi$ and $\theta$ satisfying equations
\begin{equation}\label{ag:riccati_eq}
\left\{ \begin{aligned}
    & \dot\phi_t  + \epsilon -\phi_t^2 = 0, && \phi_T = 0, \\
    & \dot\theta_t - \phi_t\left(\theta_t+\frac{d\E[\mu_t]}{dt} \right)= 0, && \theta_T = 0.
\end{aligned} \right.
\end{equation}
Note that $\psi$ does not appear as in this case $\psi=-\phi$. We stress that, as only unilateral deviation is allowed, the deviating player can not act on the abatement benefit, and therefore does not consider $a$ and $b$ in her optimal strategy. 

The family $\mathcal G$ of correlated flows defined by \eqref{LQ:corr_flow} is composed of any correlated flow $(\lambda,\mu)$ so that:
\begin{equation*}
\begin{aligned}
    & \lambda_t =  \phi_t(\mu_t-X_t) - \delta_t, \\
    & \dot{\mu}_t =  - \delta_t, \quad \mu_0 = \nu_1,
\end{aligned}
\end{equation*}
for some $\delta \in \mathbb H^2(\F_0)$ and where $X$ solves $$dX_t = \lambda_t dt + dW_t,\quad X_0 = \xi.$$
In particular, we note that it holds $\delta_t =-\frac{d\mu}{dt}$.
Therefore, in this model, the class $\mathcal G$ is composed of correlated flows $(\lambda,\mu)$ verifying
\begin{equation}\label{ag:corr_flow}
\begin{aligned}
    \lambda_t =  \phi_t(\mu_t-X_t) + \frac{d\mu}{dt}, \quad \E\quadre{\int_0^T\tonde{\frac{d\mu}{dt}}^2dt} < \infty.
\end{aligned}
\end{equation}
As the correlated strategy depends on $\mu$ itself, we remark that the state variable becomes actually mean-reverting. The extra term $\frac{d\mu}{dt}$ allows the state of the representative player following the suggested strategy to satisfy the consistency condition by following the suggested variations of $\mu$. 

As shown by the next proposition, in the abatement game, the optimality condition only depends on the law of $\mu$, the reputational cost parameter $\eps$, and the final time horizon $T$.

\begin{prop}[Optimality condition for the abatement game]\label{ag:cor:optimality_condition}
Let $(\lambda,\mu)$ be a correlated flow in $\mathcal G$. 
Let $f(\mu)=(f_t(\mu))_{t \in [0,T]}$ be given by
\begin{equation}\label{ag:f_mu}
\left\{ \begin{aligned}
    & \dot{f}_t(\mu) =  -\tonde{\phi_t\tonde{f_t(\mu)  +\mu_t - \E[\mu_t]} +\frac{d\mu}{dt} + \theta_t}, \quad 0 \leq t \leq T, \\
    & f_0(\mu) = 0.
\end{aligned} \right.
\end{equation}
Then, $(\lambda,\mu)$ is a mean field CCE if and only if the following condition is satisfied:
\begin{equation}\label{ag:optim_condition}
    \int_0^T\E\quadre{ \tonde{\frac{d\mu}{dt}}^2  }dt
    \leq \int_0^T  \E\Big[  (\phi_tf_t(\mu)+\theta_t)^2 + \phi_t^2 (\mu_t-\E[\mu_t]+f_t(\mu))^2 +(\eps -\phi_t^2)f_t(\mu)^2   \Big]dt.
\end{equation}
\end{prop}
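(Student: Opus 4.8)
The proof amounts to specializing Theorem~\ref{LQ:prop:optimality_condition} to the abatement game, so the natural starting point is the general optimality inequality \eqref{LQ:cce:final_optimality}. First I would substitute the parameters \eqref{ag:param}: since $R\equiv 1$, $B\equiv 1$, $S\equiv 0$ and $q\equiv r\equiv 0$, $H=\bar H=\tilde H=0$, the feedback coefficients \eqref{LQ:fb_coefficients} become $\Phi_t=\phi_t$, $\Theta_t=\theta_t$, and, using the relation $\psi=-\phi$ already noted after \eqref{ag:riccati_eq}, $\Psi_t=-\phi_t$. Hence the auxiliary matrices \eqref{LQ:auxiliary_coefficients} reduce to $M_t=\eps+\phi_t^2$, $N_t=-(\eps+\phi_t^2)$ and $G_t=\phi_t^2$. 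The identity $\phi_t+\psi_t=0$ then annihilates the two cross terms $\innprod{B_t^\top(\phi_t+\psi_t)\E[\mu_t],\Theta_t}$ and $\E[\innprod{B_t^\top(\phi_t+\psi_t)\mu_t,R_t^{-1}\delta_t}]$ in \eqref{LQ:cce:final_optimality}, and all remaining terms carrying $q$, $r$, $H$, $\tilde H$ vanish. Since for $(\lambda,\mu)\in\mathcal G$ in this model $\delta_t=-\frac{d\mu}{dt}$, the equation \eqref{LQ:auxiliary_f} for $f(\mu)$ collapses exactly to \eqref{ag:f_mu}.

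The key structural observation, which makes the final inequality manageable, is that the process $h_t:=f_t(\mu)+\mu_t-\E[\mu_t]$ is deterministic. Indeed, differentiating and using \eqref{ag:f_mu}, the random term $\frac{d\mu}{dt}$ cancels and $h$ solves $\dot h_t=-\phi_t h_t-\theta_t-\frac{d}{dt}\E[\mu_t]$ with $h_0=0$, an ODE with deterministic data. Therefore $h_t=\E[f_t(\mu)]$ almost surely, i.e. $f_t(\mu)-\E[f_t(\mu)]=-(\mu_t-\E[\mu_t])$; in particular $\E[f_t(\mu)^2]=\E[f_t(\mu)]^2+V_t$ and $\E[\mu_t f_t(\mu)]-\E[\mu_t]\E[f_t(\mu)]=-V_t$, where $V_t:=\E[(\mu_t-\E[\mu_t])^2]$.

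With these ingredients I would evaluate the two sides of \eqref{LQ:cce:final_optimality} separately. The left-hand side reduces to $\int_0^T\big(-\eps V_t-\tfrac12\phi_t^2V_t+\tfrac12\E[(\tfrac{d\mu}{dt})^2]-\tfrac12\theta_t^2\big)dt$, using $\E[\innprod{G_t\mu_t,\mu_t}]-\innprod{G_t\E[\mu_t],\E[\mu_t]}=\phi_t^2V_t$. For the right-hand side, expanding $\E[(\mu_t+f_t(\mu))^2]-\E[\mu_t^2]=2\E[\mu_t f_t(\mu)]+\E[f_t(\mu)^2]$ and inserting the identities of the previous paragraph turns it into $\int_0^T\big(-\tfrac12(\eps+\phi_t^2)V_t+\tfrac12(\eps+\phi_t^2)\E[f_t(\mu)]^2+\phi_t\theta_t\E[f_t(\mu)]\big)dt$. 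Subtracting, multiplying by $2$ and rearranging gives the equivalent condition $\int_0^T\E[(\tfrac{d\mu}{dt})^2]dt\le\int_0^T\big(\eps V_t+\theta_t^2+2\phi_t\theta_t\E[f_t(\mu)]+(\eps+\phi_t^2)\E[f_t(\mu)]^2\big)dt$. Finally I would check that this right-hand side coincides with $\int_0^T\E\big[(\phi_t f_t(\mu)+\theta_t)^2+\phi_t^2(\mu_t-\E[\mu_t]+f_t(\mu))^2+(\eps-\phi_t^2)f_t(\mu)^2\big]dt$: the middle square equals $\phi_t^2\E[f_t(\mu)]^2$ by the deterministic observation, and expanding the other two squares and using $\E[f_t(\mu)^2]=\E[f_t(\mu)]^2+V_t$ reproduces exactly the four terms above. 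This yields \eqref{ag:optim_condition}.

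The only delicate point is the final step: the inequality \eqref{LQ:cce:final_optimality} is long, so one must carefully track which terms survive after plugging in \eqref{ag:param}, apply the cancellation $\phi_t+\psi_t=0$ correctly, and systematically rewrite all second moments of $\mu_t$ and $f_t(\mu)$ in terms of $V_t$ and the deterministic quantity $\E[f_t(\mu)]$ so that both sides of the claimed equivalence line up exactly. Everything else is a direct substitution into the general results of Section~\ref{sec:cce}.
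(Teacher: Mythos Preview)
Your proof is correct and follows essentially the same route as the paper: specialize the general inequality \eqref{LQ:cce:final_optimality} to the parameters \eqref{ag:param} and simplify. The one noteworthy difference is how the key identity is obtained. The paper shows that the extra cross term $2\eps\,\E[(\mu_t-\E[\mu_t])(\mu_t-\E[\mu_t]+f_t(\mu))]$ vanishes by invoking the probabilistic representation $f_t(\mu)=\hat X_t-X_t$ from the proof of Theorem~\ref{LQ:prop:optimality_condition} together with consistency and independence, yielding $f_t(\mu)=\E[\hat X_t]-\mu_t$. You instead observe directly from the ODE \eqref{ag:f_mu} that $h_t:=f_t(\mu)+\mu_t-\E[\mu_t]$ satisfies a deterministic equation, hence $h_t=\E[f_t(\mu)]$; this is the same fact, but your derivation is self-contained and does not need to recall the origin of $f(\mu)$ as a difference of state processes. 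Either way the remaining algebra is identical.
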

\begin{proof}
Referring to \eqref{LQ:fb_coefficients} and \eqref{LQ:auxiliary_coefficients}, the auxiliary functions for the abatement game are as follows:
\begin{equation}\label{ag:auxiliary_coefficients}
\begin{aligned}
     & \Phi_t = \phi_t, && \Psi_t = -\phi_t, &&& \Theta_t = \theta_t,\\
     & M_t = \eps + \phi_t^2, && N_t = -\eps - \phi_t^2, &&& G_t = \phi_t^2.
\end{aligned}
\end{equation}
This implies that $f(\mu)$ given in \eqref{LQ:auxiliary_f} takes the form of equation \eqref{ag:f_mu}, recalling that $\delta_t = -\sfrac{d\mu}{dt}$ by \eqref{ag:corr_flow}.
After a few computations, we get that the optimality condition \eqref{LQ:cce:final_optimality} rewrites as 
\begin{align*}
    \int_0^T\E\quadre{ \delta^2_t  }dt
    \leq & \int_0^T  \E\Big[  (\phi_tf_t(\mu)+\theta_t)^2 + \phi_t^2(\mu_t-\E[\mu_t]+f_t(\mu))^2 +(\eps -\phi_t^2)f_t^2(\mu) \\
    & + 2\eps(\mu_t-\E[\mu_t])(\mu_t - \E[\mu_t] + f_t(\mu))    \Big]dt,
\end{align*}
using that 
\[
\E[\mu_t^2-\E[\mu_t]^2] = \E[(\mu_t-\E[\mu_t])^2].
\]
Since $f_t(\mu) = \hat{X}_t - X_t$ $\prob$-a.s., for every time $t$ by \eqref{LQ:decomposition} and $f(\mu)$ is $\sigma(\mu)$-measurable by definition, we have 
\[
f_t(\mu) = \E[f_t(\mu) \vert \mu ]= \E[\hat{X}_t - X_t  \vert \mu ] =  \E[\hat{X}_t] - \mu_t
\]
where we used the consistency condition \eqref{LQ:def_cce:cons} and the fact that $\hat{X}$ and $\mu$ are independent.
This implies that
\begin{align*}
    \E & \quadre{ (\mu_t-\E[\mu_t])(\mu_t - \E[\mu_t] + f_t(\mu)) } = \E\quadre{ \mu_t-\E[\mu_t] }(- \E[\mu_t] +\E[\hat{X}_t] ) = 0.
\end{align*}
Therefore, $(\lambda,\mu)$ is a mean field CCE if and only if condition \eqref{ag:optim_condition} is satisfied.
\end{proof}

By Proposition \ref{LQ:MFG:prop_mfg_solution}, there exists a unique mean field NE $(\hat{\alpha},\hat{m})$, which is given by
\begin{subequations}\label{AG:MFG_solution}
\begin{align}
    & \hat{\alpha}_t = \phi_t(\hat{m}_t - X_t^{NE})   , \label{AG:MFG_solution:control} \\ 
    & \hat{m}_t = \nu_1, \: \forall t \in [0,T], \label{AG:MFG_solution:flow_of_moments}
\end{align}
\end{subequations}
since we have $\phi^{NE} = \theta^{NE} = 0$ in \eqref{MFG:fixed_point_riccati}, which implies $\theta^{\hat{m}} = 0$ in \eqref{MFG:riccati_control} as well.

The mean field NE consists, on average, to null abatement, as $\hat m_t$ stays constant equal to its initial value.
This corresponds to a free-riding equilibrium, where everybody does as little as possible, and prefers to take advantage of the others' efforts.
As a result, nobody does anything.

\begin{remark}\label{ag:rem:no_mean_field_interaction}
One can easily see from the optimality condition in equation \eqref{ag:optim_condition} that, if $\eps=0$, the only mean field CCE is the mean field NE. Indeed, in this case $\phi_t = \theta_t = 0$ and $f_t(\mu)  = \sfrac{d\mu}{dt}$, for all $t\in[0,T]$. Hence the right-hand side term in \eqref{ag:optim_condition} is null, forcing $\sfrac{d\mu}{dt}=0$, $t\in[0,T]$. As $\lambda_t = \sfrac{d\mu}{dt}$, we get $\lambda_t = \hat \alpha_t = 0, \, \hat m_t = \nu_1$, which is the mean field NE when $\eps = 0$. This seems consistent with the findings of  \cite{DokkaTrivikram2022Edia}. Indeed, in an equivalent $N$-player static deterministic game without the reputational cost ($\eps = 0$), the authors find that, the more players, the less the payoff-maximising CCE outperforms the payoff of the NE. This probably comes from the fact that, at the mean field limit, there is only one mean field CCE, which is the mean field Nash equilibrium itself. 
\end{remark}

By Proposition \ref{MKV:prop_mkv_solution}, there exists a unique MFC solution $(\hat{\alpha}^{MFC},\bar x_t^{MFC})$ which reads:
\begin{subequations}\label{ag:MFC_solution}
\begin{align}
    &\hat{\alpha}^{MFC}_t = \phi_t(\bar x_t^{MFC}-X_t^{MFC}) - \bar\eta_t\bar x_t^{MFC} - \bar\chi_t,\\
    &\dot{\bar x}_t^{MFC}  =  -\bar\eta_t\bar x_t^{MFC}  - \bar\chi_t, \quad \bar x^{MFC}_0 = \nu_1,
\end{align}
\end{subequations}
with 
\begin{equation}\label{ag:MFC_solution:odes}
\left \{ \begin{aligned}
    & \dot{\bar\eta}_t = \bar\eta^2_t - b, &&\bar\eta_T = 0, \\
    & \dot{\bar\chi}_t = \bar\eta_t\bar\chi_t +a, &&\bar\chi_T = 0.
\end{aligned} \right.
\end{equation}
The MFC solution adds to the mean-reversion two terms which depend on $a$ and $b$, i.e. on the coefficients of the abatement benefit. One can note that the MFC solution and the mean field NE coincide if and only if $a=b=0$. To the contrary, when the ``common good'' aspect of climate is accounted for in the payoff through the abatement benefit, the central planner can reach higher payoffs by preventing any inefficient free-riding behaviour. This gives some room for mean field CCEs to bridge the gap between the free-riding mean field NE and the central planner optimum. 

To find mean field CCEs outperforming the mean field NE, the following condition should be fulfilled.

\begin{prop}[Outperformance condition over mean field NE]\label{ag:outperf_condition}
Let $(\lambda,\mu)$ be a correlated flow in $\mathcal G$. Then,
\begin{equation*}
    \J(\lambda,\mu)\geq \J(\hat \alpha,\hat m) \; \iff \; \E\quadre{ \int_0^T  \Big( a(\mu_t-\hat m_t)- \frac{b}{2}(\mu_t^2-\hat m_t^2)    \Big)dt } \geq  \frac{1}{2}\E\quadre{ \int_0^T \tonde{\frac{d\mu}{dt}}^2dt}.
\end{equation*}
\end{prop}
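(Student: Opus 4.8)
The plan is to derive the criterion directly from the general outperformance inequality~\eqref{LQ:cce_mfg_outperformance:final} of Theorem~\ref{LQ_general:cce_mfg_comparison} by specializing all the ingredients to the abatement game. First I would record the relevant quantities. On the mean field NE side, Proposition~\ref{LQ:MFG:prop_mfg_solution} with the parameters~\eqref{ag:param} yields $\phi^{NE}=\theta^{NE}=0$, the constant flow $\hat m_t\equiv\nu_1$ as in~\eqref{AG:MFG_solution}, and therefore $\theta^{\hat m}\equiv 0$ in~\eqref{MFG:riccati_control}, so that $\Theta^{\hat m}_t\equiv 0$ in~\eqref{LQ_general:cce_mfg:auxiliary_theta}. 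On the correlated flow side, I would use the auxiliary coefficients already computed in~\eqref{ag:auxiliary_coefficients}, namely $M_t=\eps+\phi_t^2$, $N_t=-\eps-\phi_t^2$, $G_t=\phi_t^2$, together with $\psi_t=-\phi_t$ and $\delta_t=-\,\sfrac{d\mu}{dt}$ from~\eqref{ag:corr_flow}, and I would recall that here $A=S=q=r=0$, $B=R=\sigma=1$, $L=a$, $\bar Q=b+\eps$, and $H=\tilde H=\bar H=0$.

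The second step is the substitution and the cancellations. Since $q=r=S=0$ and $\phi_t+\psi_t=0$, every term in~\eqref{LQ:cce_mfg_outperformance:final} pairing $B_t^\top(\phi_t+\psi_t)$, $r_t$ or $\Theta^{\hat m}_t$ with $\mu$, $\hat m$ or $\delta$ vanishes; the linear term collapses to $a\,(\E[\mu_t]-\hat m_t)$; and the terminal block disappears because $H=\tilde H=\bar H=0$. Moreover the two cross terms $-\E[\innprod{B_t^\top\phi_t\mu_t,\delta_t}]$ and $-\E[\innprod{B_t^\top\psi_t\mu_t,\delta_t}]$ are opposite, since $\psi_t=-\phi_t$, and cancel. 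What survives is the integral over $[0,T]$ of
\begin{multline*}
\tfrac12(b+\eps)(\hat m_t^2-\E[\mu_t^2])+\tfrac12(\eps+\phi_t^2)(\hat m_t^2-\E[\mu_t^2])+\tfrac12\phi_t^2(\hat m_t^2-\E[\mu_t^2])\\
-(\eps+\phi_t^2)(\hat m_t^2-\E[\mu_t^2])+a\,(\E[\mu_t]-\hat m_t)-\tfrac12\E\big[(\sfrac{d\mu}{dt})^2\big]\,,
\end{multline*}
which must be nonnegative.

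The last step is purely arithmetic: the coefficient of $(\hat m_t^2-\E[\mu_t^2])$ is $\tfrac12(b+\eps)+\tfrac12(\eps+\phi_t^2)+\tfrac12\phi_t^2-(\eps+\phi_t^2)$, where the $\eps$- and $\phi_t^2$-contributions cancel exactly, leaving $\tfrac b2$. Hence the inequality becomes $\int_0^T\big(\tfrac b2(\hat m_t^2-\E[\mu_t^2])+a(\E[\mu_t]-\hat m_t)\big)\,dt\ge\tfrac12\int_0^T\E\big[(\sfrac{d\mu}{dt})^2\big]\,dt$, which, using linearity of expectation and that $\hat m_t$ is deterministic (so $\E[\mu_t^2]-\hat m_t^2=\E[\mu_t^2-\hat m_t^2]$), is exactly the claimed equivalence. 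I do not expect a genuine obstacle; the only points requiring care are the signs of the $\delta$-cross terms and checking that all $\eps$- and $\phi$-dependent pieces really drop out, so that the final test involves only $a$, $b$, $\hat m\equiv\nu_1$ and the law of $\mu$ — which is precisely what makes it separately verifiable from the optimality condition~\eqref{ag:optim_condition}.
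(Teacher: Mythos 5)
Your proposal is correct and follows exactly the paper's own route: specialize the general outperformance inequality \eqref{LQ:cce_mfg_outperformance:final} to the abatement game parameters \eqref{ag:param} using $M_t=\eps+\phi_t^2$, $N_t=-\eps-\phi_t^2$, $G_t=\phi_t^2$, $\psi_t=-\phi_t$, $\Theta^{\hat m}_t=0$ and $\delta_t=-\sfrac{d\mu}{dt}$, observe that the two $\delta$-cross terms cancel and the coefficient of $\hat m_t^2-\E[\mu_t^2]$ collapses to $\sfrac{b}{2}$, and rearrange. The intermediate display you write down is the same one appearing in the paper's proof, so there is nothing to add.
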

\begin{proof}
By recalling the identities in \eqref{ag:auxiliary_coefficients}, inequality \eqref{LQ:cce_mfg_outperformance:final} takes the following form:
\begin{equation*}
\begin{aligned}
\J & (\lambda,\mu)-\J(\hat \alpha,\hat m) = \int_0^T \left ( \frac{1}{2}(b+\eps)(\hat{m}^2_t-\E[\mu_t^2]) +\frac{1}{2}(\eps + \phi^2_t)(\hat{m}^2_t-\E[\mu_t^2]) +\frac{1}{2}\phi_t^2(\hat{m}^2_t-\E[\mu_t^2]) \right.\\
& \left. -(\eps + \phi_t^2)(\hat{m}^2_t-\E[\mu_t^2]) + a(\E[\mu_t]-\hat{m}) -\phi_t\E[\mu_t\delta_t] +\phi\E[\delta_t\mu_t] - \frac{1}{2}\E[\delta^2_t]\right) \geq 0
\end{aligned}
\end{equation*}
By rearranging the terms and recalling that $\delta_t = -\sfrac{d\mu}{dt}$, we get the desired inequality.
\end{proof}
The equivalence in Proposition \ref{ag:outperf_condition} clearly illustrates that, when $a=b=0$, the best payoff mean field CCE is actually the mean field NE.
This was also implied by the fact that, when $a=b=0$, the MFC solution is a mean field NE as we already mention above.

\subsection{A tractable class of mean field CCEs}

In this subsection we show that, when $a\neq 0$ or $b\neq 0$, the optimality and outperformance conditions are not empty, and neither is their intersection.
In this case, the MFC solution is distinct from the mean field NE, which implies, according to Theorem \ref{thm:no_outperf_over_mfc}, that the MFC solution is not a mean field CCE, as the required control does not resist any unilateral deviation which tends to a less costly free-riding option.
However, by introducing correlation through correlated flows, one can manage to drive the population at quite high abatement levels, leading to more desirable social outcomes than the one of the mean field NE.

The optimality condition \eqref{ag:optim_condition} is very convenient to use when one focuses on a specific class of dynamics for $\mu$. In this subsection, we consider a subclass $\mathcal G_l \subseteq \mathcal G$ where the flows of moments are linear in time.

More precisely, let $\mathcal G_l$ be the set of all correlated flows $(\lambda, \mu) \in \mathcal G$ such that 
\begin{align}
     \mu_t = \nu_1 + tZ , \quad t\in [0,T],
\end{align}
for some $Z \in L^2(\F_0)$ independent of $\xi$ and $W$. 
Then, for all correlated flows $(\lambda, \mu) \in \mathcal G_l$ we have
$$\lambda_t  = \phi_t(\mu_t - X_t) + Z, \quad t\in [0,T].$$
In the rest of the paper, we will use the notations $z_1:= \E[Z],\; z_2 := \E[Z^2],\; \sigma_z^2 := \mathbb V[Z]$.

\begin{prop}[Optimality condition for $\mathcal G_l$]\label{ag:linear:opt_cond}
Let $(\lambda, \mu)\in \mathcal G_l$. Then $(\lambda, \mu)$ is a mean field CCE if and only if
\begin{equation}\label{ag:eq:linear:opt_cond}
    z_1^2c_M +\sigma_z^2c_V \geq 0
\end{equation}
with
\begin{equation}\label{ag:value_cM_cV}
    c_M = \int_0^T\tonde{\tonde{\phi_tr_t+g_t}^2 + \eps r_t^2 }dt - T , \quad 
    c_V = \int_0^T\tonde{\phi_t^2\tonde{v_t-t}^2+\eps v_t^2 }dt -T,
\end{equation}
and    \begin{equation}\label{ag:linear_flow:auxiliary_functions}
\begin{aligned}
    g_t = \int_t^T\phi_se^{-\int_t^s\phi_udu}ds, \quad  r_t = \int_0^t (1-g_s) e^{-\int_s^t \phi_u du}ds, \quad  v_t = \int_0^t (s\phi_s+1) e^{-\int_s^t \phi_u du}ds.
\end{aligned}
\end{equation}
\end{prop}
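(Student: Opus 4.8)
The plan is to substitute the explicit linear‑in‑time ansatz $\mu_t = \nu_1 + tZ$ into the general optimality criterion of Proposition~\ref{ag:cor:optimality_condition} and carry out the resulting elementary computation. With this choice one has $\sfrac{d\mu}{dt} = Z$, $\E[\mu_t] = \nu_1 + tz_1$ and $\mu_t - \E[\mu_t] = t(Z - z_1)$, so the left‑hand side of \eqref{ag:optim_condition} is immediately $\int_0^T \E[Z^2]\,dt = T z_2$. It then remains to evaluate the right‑hand side, for which I need closed forms for $\theta$ and $f(\mu)$ in this regime.

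First I would solve the equation for $\theta$ in \eqref{ag:riccati_eq}. Since $\sfrac{d\E[\mu_t]}{dt} = z_1$ is constant, the substitution $w_t = \theta_t + z_1$ turns it into $\dot w_t = \phi_t w_t$, $w_T = z_1$, giving $\theta_t = z_1\bigl(e^{-\int_t^T \phi_u\,du} - 1\bigr) = -z_1 g_t$, where one checks by differentiation that the $g_t$ of \eqref{ag:linear_flow:auxiliary_functions} equals $1 - e^{-\int_t^T\phi_u\,du}$. Inserting $\theta_t = -z_1 g_t$ and $\mu_t - \E[\mu_t] = t(Z - z_1)$ into \eqref{ag:f_mu} gives the linear ODE $\dot f_t(\mu) = -\phi_t f_t(\mu) - (\phi_t t + 1)(Z - z_1) - z_1(1 - g_t)$, $f_0(\mu) = 0$, whose variation‑of‑constants solution is $f_t(\mu) = -v_t(Z - z_1) - r_t z_1$, with $v_t$ and $r_t$ exactly as in \eqref{ag:linear_flow:auxiliary_functions}.

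Next I would plug these expressions into the three quadratic terms on the right‑hand side of \eqref{ag:optim_condition}. Writing $\phi_t f_t(\mu) + \theta_t = -\phi_t v_t(Z - z_1) - (\phi_t r_t + g_t)z_1$, $\mu_t - \E[\mu_t] + f_t(\mu) = (t - v_t)(Z - z_1) - r_t z_1$ and $f_t(\mu) = -v_t(Z - z_1) - r_t z_1$, I expand each square and take expectations; since $Z - z_1$ is centred, the cross terms of the form $z_1(Z - z_1)$ vanish and only the $\sigma_z^2 = \E[(Z - z_1)^2]$ and $z_1^2$ contributions survive. Collecting them, the $\phi_t^2 v_t^2$ pieces cancel between the first and third terms, leaving the $\sigma_z^2$‑coefficient $\phi_t^2(v_t - t)^2 + \eps v_t^2$ and the $z_1^2$‑coefficient $(\phi_t r_t + g_t)^2 + \eps r_t^2$, so that the right‑hand side equals $\sigma_z^2 \int_0^T\bigl(\phi_t^2(v_t - t)^2 + \eps v_t^2\bigr)\,dt + z_1^2\int_0^T\bigl((\phi_t r_t + g_t)^2 + \eps r_t^2\bigr)\,dt$.

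Finally, using $z_2 = z_1^2 + \sigma_z^2$ to rewrite $Tz_2 = Tz_1^2 + T\sigma_z^2$, the inequality $Tz_2 \le (\text{right-hand side})$ rearranges precisely into $z_1^2 c_M + \sigma_z^2 c_V \ge 0$ with $c_M$, $c_V$ as in \eqref{ag:value_cM_cV}, which is \eqref{ag:eq:linear:opt_cond}. The argument is purely computational; the only points requiring care are the correct solution of the linear ODE for $f(\mu)$ — in particular regrouping the inhomogeneous term as $-(\phi_t t + 1)(Z - z_1) - z_1(1 - g_t)$ so that $v_t$ and $r_t$ emerge — and tracking the cancellation of the $\phi_t^2 v_t^2$ terms, so I do not expect a genuine obstacle.
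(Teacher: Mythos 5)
Your proposal is correct and follows essentially the same route as the paper's proof: substitute $\mu_t=\nu_1+tZ$ into Proposition \ref{ag:cor:optimality_condition}, solve for $\theta_t=-z_1g_t$ and $f_t(\mu)=-v_t(Z-z_1)-r_tz_1$, expand the three quadratic terms using $\E[Z-z_1]=0$, and rearrange with $z_2=z_1^2+\sigma_z^2$. The only cosmetic difference is that the paper routes the computation of $f_t(\mu)$ through an auxiliary function $p_t$ before simplifying to the same expression you obtain directly.
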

\begin{proof}
For any given $(\lambda, \mu) \in \mathcal G_l$ we have $\frac{d\mu}{dt} = Z\; a.s. $ and $\;\E\quadre{\mu_t} = \nu_1 + tz_1 $
so that
\begin{align*}
    &\int_0^T \E\quadre{\tonde{\frac{d\mu}{dt}}^2} dt = Tz_2,\quad\qquad \dot{\theta}_t = \phi_t\tonde{ \theta_t + z_1}, \; \theta_T=0,\\
    &\dot{f}_t(\mu)= -\tonde{ \phi_t(f_t(\mu)+t(Z-z_1)) +Z +\theta_t}  , \quad f_0(\mu) = 0.
\end{align*}
Let us set $$p_t := \int_0^t e^{-\int_s^t \phi_u du}ds.$$
By using $p_t$ and the auxiliary functions defined in \eqref{ag:linear_flow:auxiliary_functions}, $f_t(\mu)$ and $\theta_t$ can be rewritten as follows:
\[
\theta_t = 
- z_1g_t,\qquad f_t(\mu) = -Zp_t-(Z-z_1)(v_t-p_t)+z_1(p_t-r_t).
\]
We compute the different terms appearing in the integral of the right-hand side of the optimality condition:
\begin{align*}
    & \E[(f_t(\mu))^2]= \sigma_z^2v_t^2+z_1^2r_t^2, && \E[(\mu_t-\E[\mu_t]+f_t(\mu))^2] = \sigma_z^2(v_t-t)^2+z_1^2r_t^2,\\
    & \E[(\phi_tf_t(\mu)+\theta_t)^2] = \sigma_z^2\phi_t^2v_t^2 + z_1^2(\phi_tr_t+g_t)^2.
\end{align*}
After summing, simplifying and factorising, the optimality condition becomes an inequality on the moments of $Z$ as follows:
$$Tz_2 \leq z_1^2\int_0^T\tonde{(\phi_tr_t+g_t)^2 + \eps r_t^2}dt +\sigma_z^2\int_0^T\tonde{\phi_t^2\tonde{v_t-t}^2+\eps v_t^2}dt .$$
As $z_2 = \sigma_z^2 + z_1^2$, we get \eqref{ag:eq:linear:opt_cond} and \eqref{ag:value_cM_cV}.
\end{proof}

Thanks to this simple optimality condition, the set $\mathcal G_l$ of mean field CCEs can be easily explored numerically and analytically.
In Figure \ref{fig:running_obj_with_lin_cce} we represent the running expected payoffs (time derivative of the payoff) of a mean field CCE, the MFC solution and the mean field NE as curves,  and their total payoffs as dots.
Figure \ref{fig:running_abat_effort_comp} represents the average of the state variables at each time for the same equilibria. As one can see, the mean field CCE in the figure outperforms the mean field NE in terms of both payoff and abatement levels.
Moreover, Figure \ref{fig:running_abat_effort_comp} shows that this mean field CCE also outperforms the MFC solution in terms of average level of cumulated abatement at the end of the period, i.e. $\E[\mu_T] > \bar x^{MFC}_T$.

Implications of the optimality condition for $\mathcal G_l$ can be further analysed by stating some of its analytical properties.

\begin{figure}[!ht]
\begin{subfigure}{.48\textwidth}
  \centering
\includegraphics[width =1.1\textwidth]{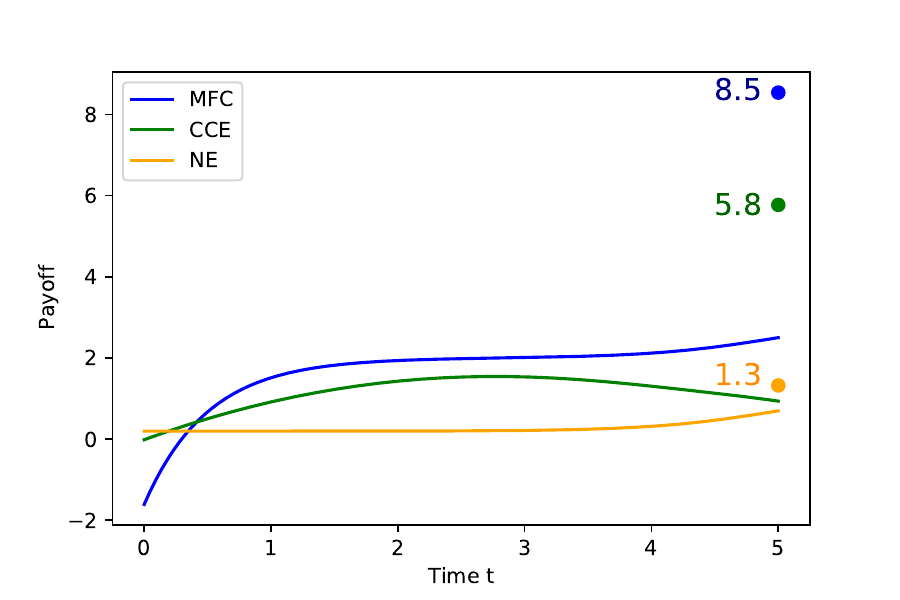}
    \caption{Running expected utility and payoff of a mean field CCE, the MFC solution and the mean field NE.}
    \label{fig:running_obj_with_lin_cce}
\end{subfigure}\hfill
\begin{subfigure}{.48\textwidth}
  \centering
    \includegraphics[width =1.1\textwidth]{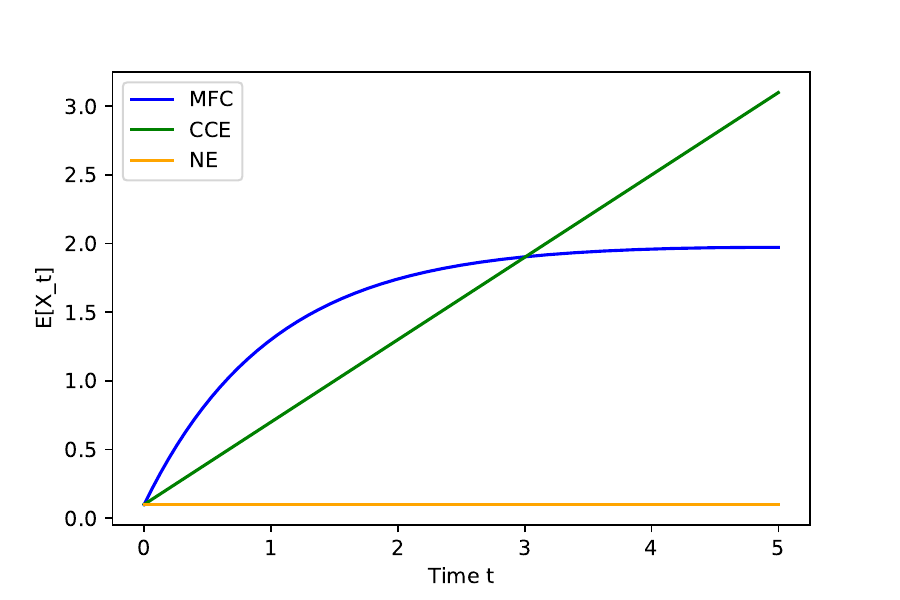}  
    \caption{Average level of cumulated abatement as a function of time for this same mean field CCE, the MFC solution and the mean field NE.}
  \label{fig:running_abat_effort_comp}
\end{subfigure}
\caption{A mean field CCE in $\mathcal G_l$ bridging the gap between the mean field NE and the MFC solution. Parameter values: $T = 5, a=2,\; b=1,\; \eps = 1,\; \nu_1 = 0.1$, $z_1 = 0.6,\;\sigma_z^2 = 0.06$. }
\label{fig:cce_bridging_gap}
\end{figure}

\begin{prop}\label{ag:signs_cM_cV}
The coefficients $c_M,c_V$ defined in Proposition \ref{ag:linear:opt_cond} verify the following:
\begin{enumerate}[label=(\roman*)]
\item $c_M < 0$,\label{item:sign_c_M}
\item $c_V> 0$ if and only if $\eps T^2 \geq 3$.
\end{enumerate}
\end{prop}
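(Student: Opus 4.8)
The plan is to bring everything back to the Riccati function $\phi$ and two scalar auxiliary ODEs. As preliminaries I would record that $\phi$ solves $\dot\phi_t = \phi_t^2 - \eps$ with $\phi_T = 0$ (explicitly $\phi_t = \sqrt\eps\tanh(\sqrt\eps(T-t)) \ge 0$, though only the identity $\phi_t^2 - \dot\phi_t = \eps$ is needed), that a direct integration gives $g_t = 1 - e^{-\int_t^T\phi_u\,du}$ — so $1-g_t = e^{-\int_t^T\phi_u\,du} > 0$, $g_T = 0$, and $\dot g_t = -\phi_t(1-g_t)$ — and that differentiating the integrals defining $r$ and $v$ yields $\dot r_t = (1-g_t) - \phi_t r_t$ with $r_0 = 0$, and $\dot v_t = (t\phi_t + 1) - \phi_t v_t$ with $v_0 = 0$.

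For part (ii) I would first show that $v_t = t$ for all $t$: the function $\psi_t := v_t - t$ satisfies $\dot\psi_t = \dot v_t - 1 = t\phi_t - \phi_t v_t = -\phi_t\psi_t$ with $\psi_0 = 0$, hence $\psi \equiv 0$ by uniqueness for this linear ODE. Then $\phi_t^2(v_t - t)^2 \equiv 0$, so $c_V = \int_0^T \eps\,t^2\,dt - T = \tfrac{\eps T^3}{3} - T = \tfrac{T}{3}(\eps T^2 - 3)$, which has the sign of $\eps T^2 - 3$; this gives (ii).

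Part (i) is the substantive part, and the trick is to introduce $\eta_t := \phi_t r_t + g_t$, the quantity squared in $c_M$. From $\dot r_t = (1-g_t) - \phi_t r_t$ one gets $\dot r_t = 1 - \eta_t$, and differentiating $\eta$ while using $\dot g_t = -\phi_t(1-g_t)$ and the Riccati identity gives $\dot\eta_t = (\dot\phi_t - \phi_t^2)r_t = -\eps r_t$; moreover $r_0 = 0$ and $\eta_T = \phi_T r_T + g_T = 0$. Now the product rule yields $\tfrac{d}{dt}(r_t\eta_t) = (1-\eta_t)\eta_t - \eps r_t^2 = \eta_t - (\eta_t^2 + \eps r_t^2)$, so the integrand of $c_M$ is $(\phi_t r_t + g_t)^2 + \eps r_t^2 = \eta_t - \tfrac{d}{dt}(r_t\eta_t)$. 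Integrating over $[0,T]$, the boundary term $r_T\eta_T - r_0\eta_0$ vanishes, so $\int_0^T\big((\phi_t r_t + g_t)^2 + \eps r_t^2\big)\,dt = \int_0^T\eta_t\,dt = \int_0^T(1 - \dot r_t)\,dt = T - r_T$, whence $c_M = -r_T$. Finally $r_T = \int_0^T(1-g_s)e^{-\int_s^T\phi_u\,du}\,ds = \int_0^T(1-g_s)^2\,ds > 0$, so $c_M < 0$.

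The only non-mechanical step is spotting the substitution $\eta_t = \phi_t r_t + g_t$: once the pair $(r,\eta)$ is seen to solve the simple linear system $(\dot r_t,\dot\eta_t) = (1-\eta_t,\,-\eps r_t)$ with $r_0 = \eta_T = 0$, the product rule reproduces the $c_M$-integrand up to the single linear term $\eta_t$, and all boundary contributions cancel; the identity $v_t \equiv t$ is the analogous collapse for $c_V$. Everything else — the closed form of $g$, the two Leibniz-rule ODEs, the final elementary integrals — is routine. As an alternative one could solve the system explicitly ($r_t = \sinh(\sqrt\eps t)/(\sqrt\eps\cosh(\sqrt\eps T))$, $\eta_t = 1 - \cosh(\sqrt\eps t)/\cosh(\sqrt\eps T)$) and integrate directly to obtain $c_M = -\tanh(\sqrt\eps T)/\sqrt\eps$, but the telescoping route avoids the hyperbolic bookkeeping.
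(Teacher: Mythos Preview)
Your proof is correct and takes a genuinely different, purely computational route from the paper. For (i), the paper argues by contradiction through the game-theoretic framework: if $c_M \ge 0$, then \eqref{ag:eq:linear:opt_cond} would admit a mean field CCE in $\mathcal G_l$ with $\sigma_z^2 = 0$ and $z_1 > 0$, i.e.\ with deterministic $\mu$, which by Theorem~\ref{LQ_general:no_deterministic_cce} must coincide with the mean field NE $\hat m_t \equiv \nu_1$, contradicting $z_1 > 0$. For (ii), the paper similarly reinterprets $c_V \ge 0$ as the existence of a CCE with $z_1 = 0$, $\sigma_z^2 > 0$, and then compares $\J'(\lambda,\mu)$ and $\J'(\hat\beta,\mu)$ directly for such a flow to arrive at the condition $\eps T^3/3 - T \ge 0$. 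You instead work entirely at the level of the ODEs: the identity $v_t \equiv t$ collapses $c_V$ to an elementary integral, and the substitution $\eta_t = \phi_t r_t + g_t$ together with the telescoping relation $\eta_t^2 + \eps r_t^2 = \eta_t - \tfrac{d}{dt}(r_t\eta_t)$ yields the exact value $c_M = -r_T < 0$. Your approach is more self-contained and delivers explicit values for both constants; the paper's approach has the pedagogical advantage of tying the signs of $c_M$ and $c_V$ back to their game-theoretic meaning (deterministic vs.\ zero-mean random flows).

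One minor point worth flagging: your explicit formula $c_V = \tfrac{T}{3}(\eps T^2 - 3)$ shows that $c_V > 0$ is equivalent to the \emph{strict} inequality $\eps T^2 > 3$; the boundary case $\eps T^2 = 3$ gives $c_V = 0$. So the statement as written (with $\ge$) is slightly off, and your computation in fact corrects it. The paper's own proof shares this slip --- its closing assertion that ``$c_V$ is null if and only if $\eps = 0$'' is contradicted by your formula.
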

\begin{proof}
(i) We argue by contradiction.
Suppose $c_M \geq 0$.
By \eqref{ag:eq:linear:opt_cond}, this is equivalent to the existence of a mean field CCE in $\mathcal{G}_l$ so that the associated random variable $Z$ satisfies $\sigma^2_Z=0$ and $z_1>0$.
Since $\sigma^2_Z=0$, $(\lambda,\mu)$ is a mean field CCE with deterministic flow of moments $\mu_t = \nu_1 + tz_1$, for any $t$ in $[0,T]$.
By Theorem \ref{LQ_general:no_deterministic_cce}, this implies that $(\lambda,\mu)=(\hat{\alpha},\hat{m})$.
Since $\hat{m}_t = \nu_1$ for every time $t$, this implies that $z_1 = 0$, leading to a contradiction.

\medskip
(ii) We now show that $c_V> 0$ if and only if $\eps T^2 \geq 3$.
Since $c_M < 0$ by point \ref{item:sign_c_M}, condition \eqref{ag:eq:linear:opt_cond} implies that $c_V \geq 0$ if and only if there exists a mean field CCE in $\mathcal{G}_l$ so that the associated random variable $Z$ satisfies $z_1=0$ and $\sigma_z^2 >0$.
We now identify the conditions so that such a correlated flow is a mean field CCE.
In particular, it verifies $\E[\mu_t] =\nu_1$, for all $t\in [0,T]$.
By equation \eqref{ag:best_deviation}, the optimal strategy of the deviating player is given by 
$\hat\beta_t = \phi_t(\nu_1-\hat X_t)$, where $\hat X$ is deviating player's state process. Such a correlated flow is a mean field CCE if and only if $\J(\lambda,\mu)\geq \J(\hat\beta,\mu)$, which is in turn equivalent to $\J'(\lambda,\mu)\leq \J'(\hat\beta,\mu)$, where
\begin{align*}
    \J'(\hat\beta,\mu) &= \frac{1}{2}\E\left[\int_0^T\tonde{(\eps+\phi_t^2)\E[(\nu_1-\hat X_t)^2]+\eps t^2\sigma_z^2}dt\right],\\
\J'  (\lambda,\mu) &= \frac{1}{2}\E\left[\int_0^T\tonde{(\eps+\phi_t^2)\E[(\mu_t- X_t)^2] + \sigma_z^2}dt\right].
\end{align*}
By computing and comparing their dynamics, it can be shown that
\[
\E[(\mu_t- X_t)^2] = \E[(\nu_1-\hat X_t)^2], \quad \forall t \in [0,T].
\]
Therefore, $(\lambda,\mu)$ is a mean field CCE if and only if
$\eps\frac{T^3}{3}-T\geq 0$.
This allows to conclude that 
$c_V\geq 0$ is equivalent to $\eps T^2\geq 3$.
Since $c_V$ is null if and only if $\eps = 0$ and $\eps\neq 0$ by assumption, we deduce that $c_V> 0$ is equivalent to $\eps T^2 \geq 3$.
\end{proof}

Proposition \ref{ag:signs_cM_cV} implies that, if the reputational cost coefficient $\eps$ and time horizon $T$ are small enough, the only mean field CCE in $\mathcal G_l$ is the mean field NE. On the contrary, when $T,\eps$ are big enough,  for any expectation of $Z$ there exists a variance level so that any correlated flow with same expectation and higher variance is a mean field CCE.

\subsection{Comparison with mean field NE}
We have seen above that increasing the variance of $Z$ is a way to build mean field CCEs easily.
However, increasing the variance of $Z$ comes at the cost of lowering the odds to outperform the mean field NE, as shown in the next Proposition.
\begin{prop}[Outperformance over the mean field NE in $\mathcal G_l$]\label{ag:lin:outperf_condition}
    A correlated flow $(\lambda,\mu)\in \mathcal G_l$ outperforms the mean field NE in terms of payoff if and only if 
    \begin{equation}
        Tz_1(a- b\nu_1)-\tonde{z_1^2+\sigma_z^2}
        \left(b\frac{T^2}{3}+1\right) \geq 0
    \end{equation}
\end{prop}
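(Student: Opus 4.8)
The plan is to specialize the general outperformance criterion of Proposition \ref{ag:outperf_condition} to the subclass $\mathcal G_l$ and then carry out the resulting elementary moment computation. Recall that for $(\lambda,\mu)\in\mathcal G_l$ one has $\mu_t=\nu_1+tZ$, hence $\frac{d\mu}{dt}=Z$ $\prob$-a.s., while the unique mean field NE satisfies $\hat m_t=\nu_1$ for every $t\in[0,T]$ by \eqref{AG:MFG_solution:flow_of_moments}. Substituting these expressions into the equivalence of Proposition \ref{ag:outperf_condition} reduces the statement to comparing two explicit deterministic integrals in $t$ evaluated against the first two moments of $Z$.

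First I would compute the left-hand side of the criterion in Proposition \ref{ag:outperf_condition}. Using $\mu_t-\hat m_t=tZ$ and $\mu_t^2-\hat m_t^2=2\nu_1tZ+t^2Z^2$, together with $\E[Z]=z_1$, $\E[Z^2]=z_2$ and Fubini's theorem, one gets
\[
\E\quadre{\int_0^T\tonde{a(\mu_t-\hat m_t)-\tfrac b2(\mu_t^2-\hat m_t^2)}dt}
=\frac{T^2}{2}\,a z_1-\frac b2\tonde{\nu_1 z_1 T^2+z_2\,\frac{T^3}{3}},
\]
while on the right-hand side $\E\quadre{\int_0^T\tonde{\frac{d\mu}{dt}}^2dt}=\E\quadre{\int_0^T Z^2dt}=Tz_2$. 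Hence $(\lambda,\mu)$ outperforms the mean field NE if and only if $\frac{T^2}{2}az_1-\frac b2(\nu_1z_1T^2+z_2\frac{T^3}{3})\ge\frac12 Tz_2$.

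Then I would simply rearrange: multiplying by $2$ and dividing by $T>0$ gives $Tz_1(a-b\nu_1)-z_2(b\frac{T^2}{3}+1)\ge 0$, and inserting the identity $z_2=z_1^2+\sigma_z^2$ produces exactly the stated inequality, which closes the argument. There is no genuine obstacle here beyond bookkeeping; the only point worth stressing is that $z_2$ — not $z_1^2$ — enters both the $b\,T^2/3$ term and the $Tz_2$ term, since both originate from an $\E[Z^2]$. This is precisely why $\sigma_z^2$ appears with a positive sign, encoding the observation made right after the statement that a larger variance of $Z$ makes it harder for a correlated flow in $\mathcal G_l$ to beat the mean field NE payoff.
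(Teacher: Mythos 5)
Your proposal is correct and follows exactly the paper's route: the paper likewise deduces the result by plugging $\hat m_t=\nu_1$, $\mu_t=\nu_1+tZ$ and $\frac{d\mu}{dt}=Z$ into Proposition \ref{ag:outperf_condition} and computing the moments, the only difference being that you write out the elementary integration that the paper leaves implicit. Your closing remark correctly identifies that $z_2=z_1^2+\sigma_z^2$ is the source of the positive $\sigma_z^2$ coefficient.
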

\begin{proof}
This result follows directly from Proposition \ref{ag:outperf_condition}.
The inequality is assessed in the specific case of correlated flows in $\mathcal G_l$, using the following equalities:
\begin{equation}\label{ag:lin:values_moments_mu}
    \hat m_t = \nu_1,\quad \E[\mu_t] = \nu_1+tz_1,\quad \E[\mu_t^2] = \nu_1^2+2\nu_1tz_1+t^2(z_1^2+\sigma_z^2),\quad \frac{d\mu}{dt} = Z, \quad t\in [0,T].
\end{equation}
\end{proof}
The optimality and outperformance conditions for correlated flows in $\mathcal G_l$ in, respectively, Proposition \ref{ag:linear:opt_cond} and Proposition \ref{ag:lin:outperf_condition}, are both expressed in terms of the first and second moments of associated variable $Z$.
This allows us to characterize analytically a region of mean field CCEs outperforming the mean field NE in the plane $(z_1,\sigma_z^2)$.

\begin{prop}\label{ag:lin:outperf_region}
Assume $\eps T^2 \geq 3$.
Then, a correlated flow in $\mathcal G_l$ is a mean field CCE outperforming the mean field NE in terms of payoff if and only if the associated random variable $Z$ verifies
\begin{equation}\label{ag:lin:zone_outperf}
    -\frac{c_M}{c_V}z_1^2 \leq \sigma_z^2 \leq z_1\frac{3T(a-b\nu_1)}{bT^2+3}-z_1^2.
\end{equation}
Moreover, the set of mean field CCEs outperforming the mean field NE is not reduced to the mean field NE if and only if $a-b\nu_1 > 0$.
\end{prop}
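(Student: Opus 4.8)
The plan is to obtain the proposition as a direct consolidation of the two characterizations already at our disposal: Proposition~\ref{ag:linear:opt_cond}, which says a flow $(\lambda,\mu)\in\mathcal G_l$ is a mean field CCE iff $z_1^2c_M+\sigma_z^2c_V\ge 0$, and Proposition~\ref{ag:lin:outperf_condition}, which says such a flow outperforms the mean field NE iff $Tz_1(a-b\nu_1)-(z_1^2+\sigma_z^2)(bT^2/3+1)\ge 0$. Both inequalities are affine in $\sigma_z^2$, so the first step is to rewrite each as a bound on $\sigma_z^2$. Here the hypothesis $\eps T^2\ge 3$ enters, through Proposition~\ref{ag:signs_cM_cV}: it guarantees $c_V>0$, while $c_M<0$ holds unconditionally, so the optimality inequality becomes $\sigma_z^2\ge -\tfrac{c_M}{c_V}z_1^2$ with $-c_M/c_V>0$; similarly, since $bT^2/3+1>0$, the outperformance inequality becomes $\sigma_z^2\le z_1\tfrac{3T(a-b\nu_1)}{bT^2+3}-z_1^2$. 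A flow in $\mathcal G_l$ is simultaneously a mean field CCE and an outperformer of the mean field NE exactly when both bounds hold, which is precisely \eqref{ag:lin:zone_outperf}; this proves the ``if and only if'' part.

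For the ``moreover'' claim I would first pin down that, inside $\mathcal G_l$, the mean field NE is nothing but the flow with $Z\equiv 0$: setting $Z=0$ in the definition of $\mathcal G_l$ gives $\mu_t=\nu_1$ and $\lambda_t=\phi_t(\nu_1-X_t)$, which is exactly the mean field NE \eqref{AG:MFG_solution}; conversely, by Assumption~\ref{LQ:assumption_F_0} every pair $(z_1,\sigma_z^2)\in\R\times[0,\infty)$ is the moment pair of some admissible $Z\in L^2(\F_0)$ independent of $\xi$ and $W$ (e.g. an affine image of the uniform random variable supported by $\F_0$). Hence the set of mean field CCEs in $\mathcal G_l$ that outperform the NE reduces to $\{(\hat\alpha,\hat m)\}$ if and only if \eqref{ag:lin:zone_outperf} admits no solution $(z_1,\sigma_z^2)$ with $(z_1,\sigma_z^2)\neq(0,0)$.

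It then remains to decide when \eqref{ag:lin:zone_outperf} has such a non-trivial solution. If $z_1=0$ the band collapses to $0\le\sigma_z^2\le0$, leaving only the origin (consistently with Theorem~\ref{LQ_general:no_deterministic_cce}, since a deterministic $\mu$ forces the NE). If $z_1\neq 0$, the lower endpoint $-\tfrac{c_M}{c_V}z_1^2$ is strictly positive, so the band is non-empty iff the upper endpoint exceeds it, i.e. iff $z_1^2\bigl(1-\tfrac{c_M}{c_V}\bigr)\le z_1\tfrac{3T(a-b\nu_1)}{bT^2+3}$; since $1-c_M/c_V>1>0$, this amounts to $z_1\bigl(\tfrac{c_V-c_M}{c_V}\,z_1-\tfrac{3T(a-b\nu_1)}{bT^2+3}\bigr)\le 0$, which has solutions with $z_1>0$ small iff $a-b\nu_1>0$, and none otherwise. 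This yields the stated equivalence and makes explicit that, whenever $a-b\nu_1>0$, every $z_1\in\bigl(0,\tfrac{3T(a-b\nu_1)}{bT^2+3}\cdot\tfrac{c_V}{c_V-c_M}\bigr]$ produces a genuine family of outperforming mean field CCEs.

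I do not expect a real analytic obstacle here: the statement is essentially an algebraic rearrangement of Propositions~\ref{ag:linear:opt_cond}, \ref{ag:lin:outperf_condition} and \ref{ag:signs_cM_cV}, together with the elementary observation that the NE corresponds to $(z_1,\sigma_z^2)=(0,0)$. The only points requiring care are the sign bookkeeping --- using $\eps T^2\ge 3$ precisely to move $c_V$ to the correct side of the optimality inequality, and $c_M<0$ to guarantee $1-c_M/c_V>0$ so that the emptiness/non\-emptiness of the band \eqref{ag:lin:zone_outperf} is controlled --- and the separate treatment of the degenerate line $z_1=0$; one should also keep in mind that any moment pair with $\sigma_z^2\ge 0$ is genuinely realisable thanks to the extra randomness granted by Assumption~\ref{LQ:assumption_F_0}, so that the region \eqref{ag:lin:zone_outperf} is populated by honest correlated flows and not merely by admissible parameter values.
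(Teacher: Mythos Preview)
Your argument is essentially the paper's: combine Propositions~\ref{ag:linear:opt_cond}, \ref{ag:lin:outperf_condition} and~\ref{ag:signs_cM_cV} to rewrite each condition as a one-sided bound on $\sigma_z^2$ (using $c_V>0$ from $\eps T^2\ge 3$ and $c_M<0$), thereby obtaining~\eqref{ag:lin:zone_outperf}, and then determine for which $z_1$ the resulting band is non-empty. The paper phrases the last step geometrically, introducing the two parabolas $f(z_1)=-\tfrac{c_M}{c_V}z_1^2$ and $g(z_1)=z_1\tfrac{3T(a-b\nu_1)}{bT^2+3}-z_1^2$ and comparing $g'(0)$ with $f'(0)=0$; your factored inequality $z_1\bigl(\tfrac{c_V-c_M}{c_V}\,z_1-\tfrac{3T(a-b\nu_1)}{bT^2+3}\bigr)\le 0$ is the algebraic form of the same computation. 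Your extra remarks on the identification of the mean field NE with $(z_1,\sigma_z^2)=(0,0)$ and on the realisability of any prescribed moment pair via Assumption~\ref{LQ:assumption_F_0} are not spelled out in the paper's proof but are welcome clarifications.

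One point deserves care in the ``moreover'' part. Your conclusion ``and none otherwise'' tacitly restricts attention to $z_1>0$: if $a-b\nu_1<0$, then for $z_1\in\bigl[\tfrac{c_V}{c_V-c_M}\cdot\tfrac{3T(a-b\nu_1)}{bT^2+3},\,0\bigr)$ the factored inequality also holds and the band~\eqref{ag:lin:zone_outperf} is non-empty with $\sigma_z^2>0$, so the region is again not reduced to the origin. The paper's derivative test $g'(0)>f'(0)$ makes the very same implicit restriction to $z_1>0$, so you are matching its argument exactly; but in both versions the ``only if'' direction of the second claim is not fully justified when $a-b\nu_1<0$.
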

\begin{proof}
By combining Propositions \ref{ag:cor:optimality_condition}, \ref{ag:signs_cM_cV} and \ref{ag:lin:outperf_condition}, we can see that a correlated flow in $\mathcal G_l$ with moments $z_1,\sigma_z^2$ for $Z$ is a mean field CCE outperforming the mean field NE in terms of expected payoff if and only if equation \eqref{ag:lin:zone_outperf} is verified.
Let us denote by $f,g:\R\to \R$ respectively the left hand-side and the right-hand side of that equation as function of $z_1$.
They are both parabola intersecting at the point $(0,0)$. The second derivative of $f$, $f''$, is strictly increasing as $c_V$ is positive according to Proposition \ref{ag:signs_cM_cV}, while $g''$ is strictly decreasing. Simple arguments therefore imply that the region between the two curves characterized in equation \eqref{ag:lin:zone_outperf}, i.e.,
\[ \{(x,y) \in \mathbb R^2: f(x)\leq y \leq g(x)\},\]
is not equal to the point $(0,0)$ if and only if $g'(0)> f'(0)$. This is the case if and only if $a-b\nu_1 > 0$. If the region between the two curves was reduced to the point $(0,0)$, the only mean field CCEs outperforming the mean field NE would verify $z_1=\sigma_z^2=0$, which corresponds to the mean field NE. 
\end{proof}

Figure \ref{fig:zone_outperf_CCE} represents the region of mean field CCEs outperforming the payoff of the mean field NE in the plane $(z_1,\sigma_z^2)$ for the same parameters as in Figure \ref{fig:cce_bridging_gap}. The outperformance condition parabola (``upper parabola'') is represented in red, while the optimality condition parabola (``lower parabola'') is in blue.

\begin{figure}[ht]
    \centering
    \includegraphics[width = 0.7\linewidth]{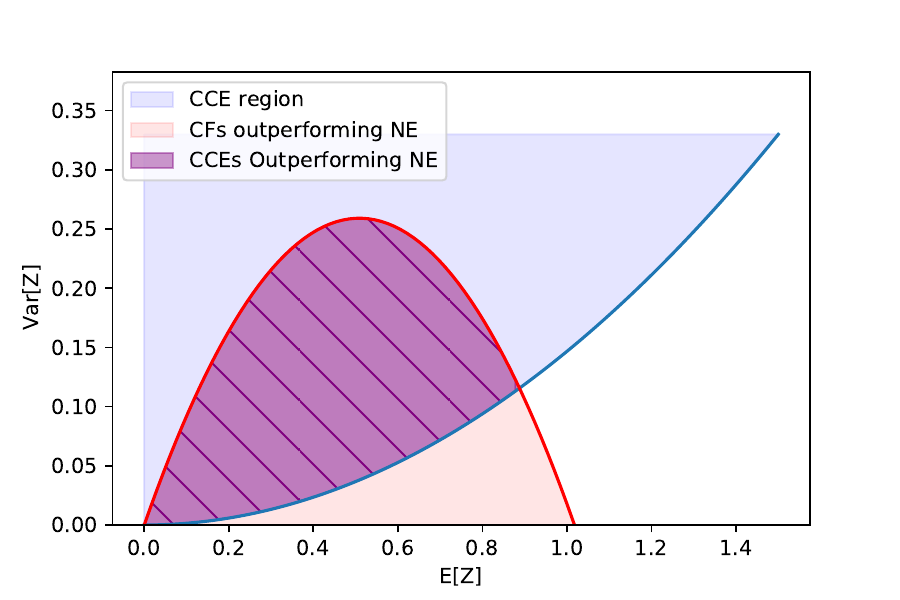}
    \caption{Region of mean field CCEs in $\mathcal G_l$ which outperform the Nash equilibrium in the plane $(z_1,\sigma_z^2)$. }
    \label{fig:zone_outperf_CCE}
\end{figure}

Proposition \ref{ag:lin:outperf_region} shows that each of the parameters $a$, $b$ and $\eps$ of the payoff plays specific roles in identifying mean field CCEs that outperform the payoff of the mean field NE.
The upper parabola comes from the outperformance condition and only depends on $a,b$ while the lower parabola comes from the optimality condition and only depends on $\eps$. The existence of the abatement benefit leaves space for more correlated flows to outperform the free-riding equilibrium payoff, as the upper parabola increases in $a$ and decreases in $b$. Moreover, Figure \ref{fig:minus_c_M_over_c_V} shows that the ratio $-c_M / c_V$ and hence the lower parabola is decreasing in $\eps$. Therefore, the reputational cost helps correlated flows to be CCEs. 
Indeed, with a higher reputational cost, countries have more interest in staying close to one another, and therefore the correlation device is more enforcing.

\begin{figure}[ht]
    \centering
    \includegraphics[width=0.5\linewidth]{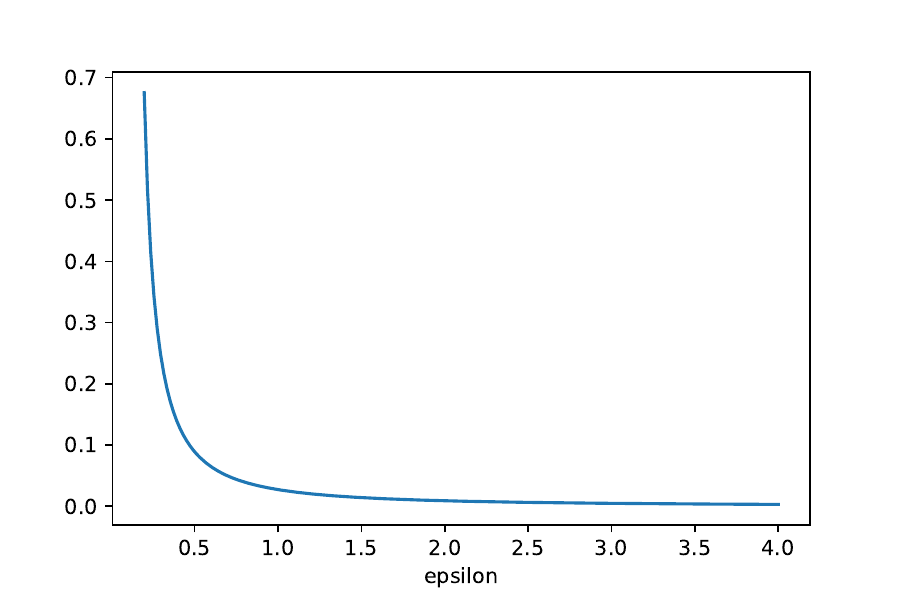}
    \caption{Representation of $-\sfrac{c_M}{c_V}$ in function of $\eps$, where $\eps$ verifies $\eps T^2 \geq 3$. Parameterization: $T=5$.}
    \label{fig:minus_c_M_over_c_V}
\end{figure}

Figure \ref{fig:3D_graph} represents the payoffs of the mean field CCEs belonging to $\mathcal G_l$ and which outperform the mean field NE in terms of payoff, i.e., verifying equation \eqref{ag:lin:zone_outperf}.
According to this graph, using the simple and tractable class of correlated flows $\mathcal G_l$, one is able to explore a large part of the payoffs attainable by mean field CCEs in this game.
Indeed, mean field CCEs payoffs get pretty close to the unattainable bound provided by the MFC solution payoff, relatively to the payoff of the mean field NE.

\begin{figure}[!ht]
    \centering
    \includegraphics[width = .8\textwidth]{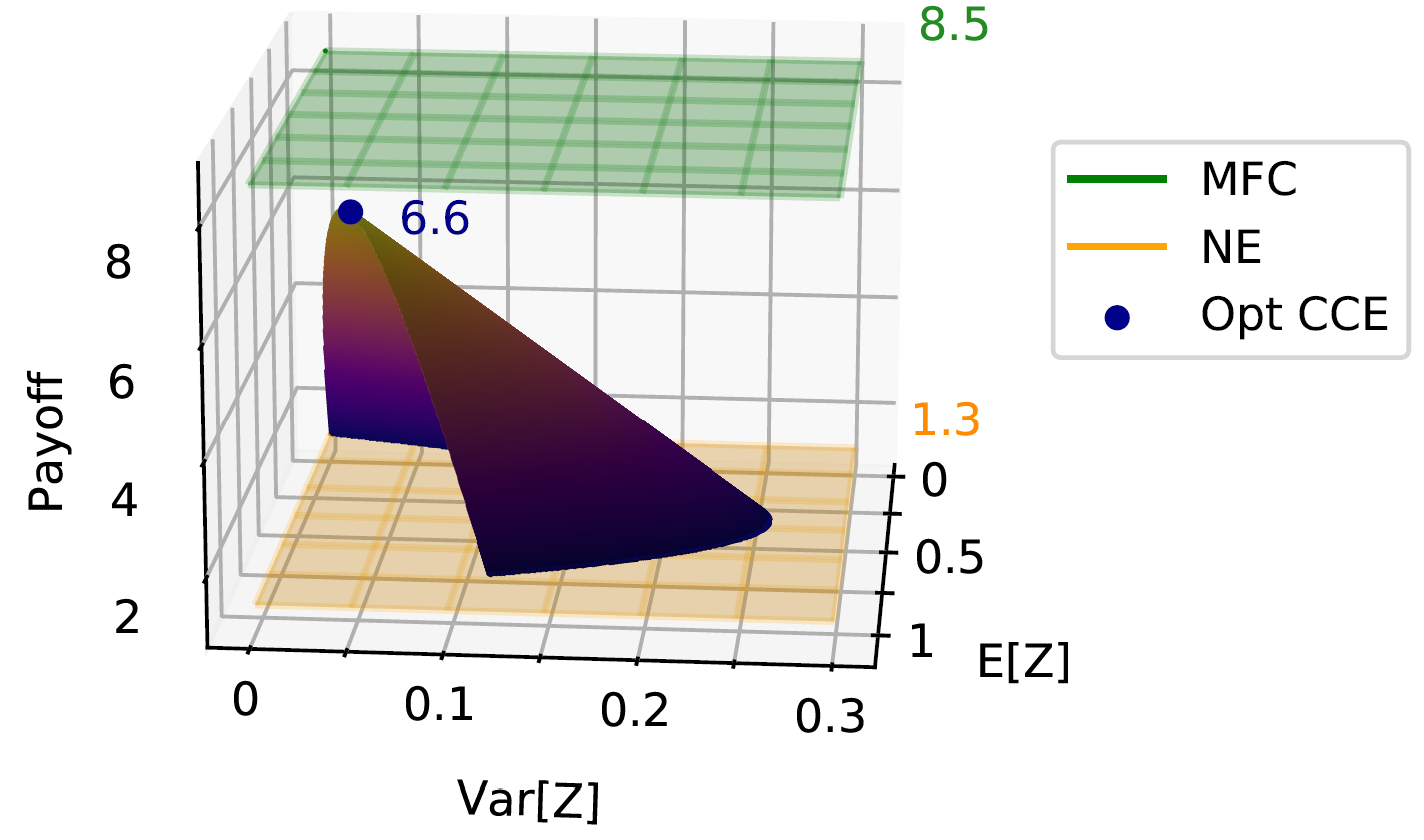}
    \caption{Expected payoff of mean field CCEs in $\mathcal G_l$ which outperform the mean field NE in terms of payoff (i.e. verifying equation \eqref{ag:lin:zone_outperf}) in the 3D space with $z_1$ as the x-axis, $\sigma_z^2$ as the y-axis and expected payoff as the z-axis. Parameters: same as Figure \ref{fig:cce_bridging_gap}.}
    \label{fig:3D_graph}
\end{figure}

The mean field CCEs in $\mathcal G_l$ which are optimal in terms of expected payoffs can be identified analytically.
\begin{prop}\label{ag:maximal_payoff}
Assume $ \eps T^2 \geq 3$.
Then, the expected payoff of mean field CCEs in $\mathcal G_l$  is maximised by a correlated flow $(\lambda,\mu)$ so that the associated random variable $Z$ satisfies
\begin{equation}\label{ag:maximal_payoff_condition}
    z_1 = \frac{T(a-b\nu_1)}{2(1-\frac{c_M}{c_V})\tonde{b\frac{T^2}{3}+1}}, \quad \sigma_z^2 = -z_1^2\frac{c_M}{c_V}.
\end{equation}
\end{prop}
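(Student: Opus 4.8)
The plan is to turn the maximization into an elementary two-variable optimization in the moments $z_1$ and $\sigma_z^2$ of the correlating random variable $Z$, exploiting that, for flows in $\mathcal G_l$, both the payoff and the property of being a mean field CCE depend on $Z$ only through $(z_1,\sigma_z^2)$.

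First I would write the payoff gap with respect to the mean field NE as a function of $(z_1,\sigma_z^2)$. Substituting the identities \eqref{ag:lin:values_moments_mu} into the characterization of Proposition \ref{ag:outperf_condition} (as already done in the proof of Proposition \ref{ag:lin:outperf_condition}) yields
\[
\J(\lambda,\mu)-\J(\hat\alpha,\hat m)=\frac{T}{2}\left(Tz_1(a-b\nu_1)-(z_1^2+\sigma_z^2)\left(\frac{bT^2}{3}+1\right)\right).
\]
Since $\J(\hat\alpha,\hat m)$ does not depend on the flow, maximizing $\J(\lambda,\mu)$ over the mean field CCEs of $\mathcal G_l$ is the same as maximizing the right-hand side over the admissible region in the $(z_1,\sigma_z^2)$-plane. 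By Proposition \ref{ag:linear:opt_cond} a pair $(z_1,\sigma_z^2)$ is admissible precisely when $z_1^2c_M+\sigma_z^2c_V\geq 0$; under the standing hypothesis $\eps T^2\geq 3$, Proposition \ref{ag:signs_cM_cV} gives $c_V>0$ and $c_M<0$, so the admissible region is $\{(z_1,\sigma_z^2):\sigma_z^2\geq -\tfrac{c_M}{c_V}z_1^2\}$. In particular $\sigma_z^2\geq 0$ is automatic, and by Assumption \ref{LQ:assumption_F_0} any such pair is the pair of first two moments of some admissible $Z$.

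Next I would use a monotonicity and concavity argument. The objective is affine in $\sigma_z^2$ with coefficient $-\tfrac{T}{2}\left(\tfrac{bT^2}{3}+1\right)<0$, hence strictly decreasing in $\sigma_z^2$; therefore at any maximizer the optimality constraint is active, $\sigma_z^2=-\tfrac{c_M}{c_V}z_1^2$. Plugging this in, so that $z_1^2+\sigma_z^2=z_1^2\left(1-\tfrac{c_M}{c_V}\right)$, the objective becomes the one-dimensional map
\[
z_1\longmapsto \frac{T}{2}\left(Tz_1(a-b\nu_1)-z_1^2\left(1-\frac{c_M}{c_V}\right)\left(\frac{bT^2}{3}+1\right)\right),
\]
which is a strictly concave quadratic because $1-\tfrac{c_M}{c_V}>0$ (as $c_M<0<c_V$) and $\tfrac{bT^2}{3}+1>0$. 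Its unique critical point, obtained from the first-order condition, is exactly the value of $z_1$ in \eqref{ag:maximal_payoff_condition}, and the corresponding $\sigma_z^2=-z_1^2\tfrac{c_M}{c_V}$; strict concavity makes it the global maximizer.

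I do not expect a genuine obstacle here: once the payoff gap is written in the closed form above, the statement is a routine constrained concave maximization. The only points deserving a little care are the sign facts $c_V>0$ and $1-\tfrac{c_M}{c_V}>0$ (which allow the optimality constraint to be saturated and make the reduced objective concave), and the observation, via Assumption \ref{LQ:assumption_F_0}, that every admissible pair $(z_1,\sigma_z^2)$ is realized by an $\F_0$-measurable random variable $Z$ independent of $\xi$ and $W$, so that the optimization over moments is genuinely an optimization over $\mathcal G_l$.
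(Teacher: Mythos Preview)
Your proposal is correct and follows essentially the same route as the paper's proof: express the payoff gap as an explicit function of $(z_1,\sigma_z^2)$, use monotonicity in $\sigma_z^2$ together with $c_V>0$ to saturate the optimality constraint, and then maximize the resulting concave quadratic in $z_1$. The only addition is your explicit remark that Assumption \ref{LQ:assumption_F_0} guarantees realizability of any admissible pair $(z_1,\sigma_z^2)$, which is a welcome clarification but not a change of approach.
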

\begin{proof}
We note that 
\begin{align*}
    \J(\lambda,\mu) &= \J(\hat{\alpha},m)  + \frac{T^2}{2}z_1(a- b\nu_1)-\frac{(\sigma_z^2+z_1^2)}{2}\left(b\frac{T^3}{3}+T\right).
\end{align*}
Therefore, $\J(\lambda,\mu)$ is strictly decreasing in $\sigma_z^2$.
Moreover, since $c_V>0$ according to Proposition \ref{ag:signs_cM_cV}, the optimality condition for correlated flows in $\mathcal{G}_l$ of Proposition \ref{ag:linear:opt_cond} implies that $(\lambda,\mu)$ is a mean field CCE if and only if 
\[
\sigma_z^2 \geq -z_1^2\frac{c_M}{c_V}.
\]
As $-\sfrac{c_M}{c_V}> 0$, for any given $z_1$ the mean field CCE with the highest expected payoff verifies $\sigma_z^2= -z_1^2\sfrac{c_M}{c_V}$. 
From now on, let us set $\sigma_z^2$ to this value.
We get
$$ \J(\lambda,\mu) = \J(\hat{\alpha},m)  + \frac{T^2}{2}z_1(a- b\nu_1)-\frac{z_1^2(1-\frac{c_M}{c_V})}{2}\left(b\frac{T^3}{3}+T\right),$$
which is a polynomial in $z_1$ whose maximum point is given by $z_1$ as in \eqref{ag:maximal_payoff_condition}.
\end{proof}

Figure \ref{fig:trade_off_obj} shows the payoffs of mean field CCEs in $\mathcal G_l$ with payoff-maximizing variance for $Z$, i.e. verifying $\sigma_z^2= -z_1^2\sfrac{c_M}{c_V}$. These payoffs are expressed as a function of $z_1$, on the $x$-axis, and they are compared to the payoffs of the MFC solution and the mean field NE, with same parameter settings as in the other figures. Figure \ref{fig:trade_off_ab} represents the average cumulated abatement over the whole time interval for the same equilibria, in the same fashion.

One can see out of Figure \ref{fig:trade_off_abat_obj} that for ``little ambitious'' mean field CCEs in $\mathcal G_l$, i.e., with relatively small $z_1$, there is actually a significant increase in both abatement levels and payoffs with regards to the mean field NE. However, there is a critical value of $z_1$, given by the payoff-maximising value of Proposition \ref{ag:maximal_payoff} and represented by the grey vertical line, after which increasing abatement comes at the cost of decreasing the payoff. This is in partial contrast with the results of \cite{DokkaTrivikram2022Edia} where a much stronger trade-off was observed. The difference is due to the presence of the reputational cost.

Characterizing a surface of mean field CCEs allows any moderator to choose the mean field CCE which corresponds to its goal, which might be to maximise expected payoff, or to consider positive externalities of abatement which are not ``priced'' in $\J$, and therefore to favor high abatement over maximising payoffs.

\begin{figure}[!ht]
\begin{subfigure}{.48\textwidth}
  \centering
\includegraphics[width =1.1\textwidth]{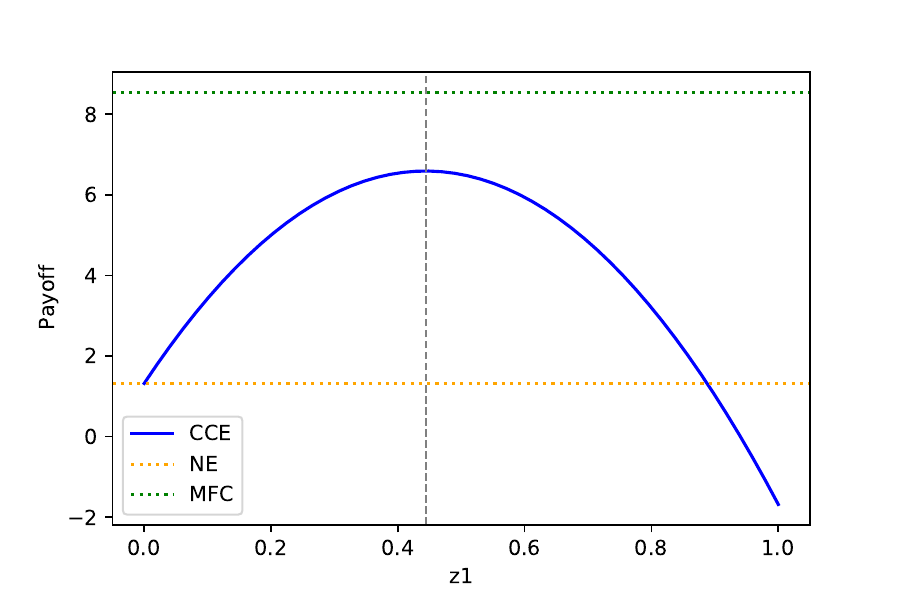}
    \caption{}
    \label{fig:trade_off_obj}
\end{subfigure}\hfill
\begin{subfigure}{.48\textwidth}
  \centering
    \includegraphics[width =1.1\textwidth]{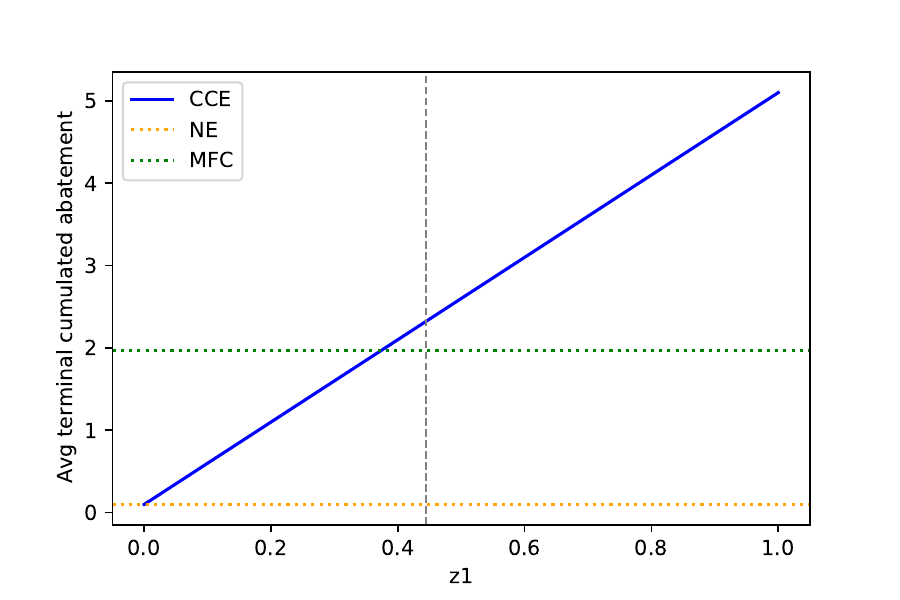}  
    \caption{}
  \label{fig:trade_off_ab}
\end{subfigure}
\caption{(a) Payoff of mean field CCEs of $\mathcal G_l$ with optimal variance as a function of $z_1$, and (b) their average cumulated abatement $\E[\mu_T]$, compared to the same quantities for the MFC solution and mean field NE. Parameter values are the same as in Figure \ref{fig:cce_bridging_gap}. }
\label{fig:trade_off_abat_obj}
\end{figure}

\addcontentsline{toc}{section}{Acknowledgements}
\section*{Acknowledgements}
The third author has been supported by French National Research Agency (ANR) under the program ``Investissements d’Avenir'' (``Investments for future''), via the FCD Labex of the Institut Louis Bachelier.

\addcontentsline{toc}{section}{Appendix}
\section*{Appendix}\label{sec:appendix}
\begin{proof}[Proof of Proposition \ref{LQ_general:prop_best_dev}]
We follow the approach of \cite[Chapter 6]{yong_zhou_stochastic_controls}.
We start by noticing that the equation for $\phi$ is a matrix Riccati equation, which admits a unique solution $\phi \in \mathcal{C}^1([0,T],\mathcal{S}^d)$ by Chapter 6, Theorem 7.2 therein. This implies the existence and uniqueness for $\psi$ and $\theta$ as well as they satisfy linear ODEs.
 
First, thanks to Assumptions \ref{LQ:assumptions}(4), the cost functional $\J'$ is strictly convex and therefore has a unique minimizer.
Indeed, by looking at \eqref{LQ:dev_player:only_cost_terms}, we have
\begin{align*}
\frac{1}{2} & \innprod{ Q_t X_t, X_t }  + \innprod{\Tilde{Q}_t \E[\mu_t] + q_t, X_t } +\frac{1}{2} \innprod{R_t\beta_t,\beta_t} + \innprod{S_t X_t, \beta_t } + \innprod{\Tilde{Q}_t \E[\mu_t] + q_t, X_t } + \innprod{ r_t,\beta_t} \\
& \geq \frac{1}{2}d_1 \abs{X_t}^2  + \frac{1}{2}d_2 \abs{\beta_t}^2 -\sup_t\abs{S_t}\abs{X_t}\abs{\beta_t} + \innprod{\Tilde{Q}_t \E[\mu_t] + q_t, X_t } + \innprod{ r_t,\beta_t} \\
& > \frac{1}{2}d_1 \abs{X_t}^2  + \frac{1}{2}d_2 \abs{\beta_t}^2 - \sqrt{d_1} \sqrt{d_2}\abs{S_t}\abs{X_t}\abs{\beta_t} + \innprod{\Tilde{Q}_t \E[\mu_t] + q_t, X_t } + \innprod{ r_t,\beta_t}.
\end{align*}
This inequality and the assumption $H \geq 0$ imply that the cost functional is strictly convex and lower semicontinuous, which yields that the minimizer exists and it is unique. Observe that this holds for any $(\E[\mu_t])_{t \in [0,T]}$, since it appears only in the linear terms $\innprod{\Tilde{Q}_t \E[\mu_t] + q_t, X_t }$ and $\innprod{ r_t,\beta_t}$.

\medskip
\noindent We apply the stochastic maximum principle, as in \cite[Chapter 6, Proposition 5.5]{yong_zhou_stochastic_controls}.
In the following, for the sake of clarity, we omit the dependence on time in all the matrices appearing in the coefficients and in the cost functions.
Let $\mathcal{H}(t,x,y,\beta)$ be the the reduced Hamiltonian of the system, defined as
\begin{align*}
    \mathcal{H} (t,x,y,\beta) & = \innprod{A x + B \beta,y} + \frac{1}{2}\innprod{Q x, x} + \innprod{\Tilde{Q}\E[\mu_t] + q,x}  + \innprod{Sx,\beta} + \frac{1}{2}\innprod{R\beta,\beta} + \innprod{r,\beta}.
\end{align*}
Then, the control $\hat{\beta}=(\hat{\beta}_t)_{t \in [0,T]}$ is optimal if and only if there exists a 4-tuple $(X,\hat{\beta},Y,Z)$ which satisfies
\begin{equation}\label{LQ:FBSDE}
\left\{ \begin{aligned}
    & dX_t = (A_t X_t + B_t \hat{\beta}_t)dt +\sigma_t dW_t, && X_0 = \xi, \\
    & dY_t = - \tonde{A_t^\top Y_t + Q_t X_t + \Tilde{Q}_t\E[\mu_t] + q_t + S_t^\top \hat{\beta}_t }dt + Z_t dW_t, && Y_T = H X_T + \Tilde{H}\E[\mu_T], \\
    & B_t^\top Y_t + S_t X_t + R_t \hat{\beta}_t + r_t = 0.
\end{aligned} \right.
\end{equation}
We make the following ansatz on $Y$:
\begin{equation*}
    Y_t = \phi_t X_t + \psi_t \E[\mu_t] + \theta_t,
\end{equation*}
with $\phi$, $\psi$ and $\theta$ deterministic functions taking values in $\R^{d \times d}$, $\R^{d \times d}$ and $\R^d$ respectively.
Since $R$ is invertible for every time $t$ by assumption \ref{LQ:assumptions}(4), by comparing the stochastic differential of the ansatz with \eqref{LQ:FBSDE}, we find that $Z_t = \hat{\phi}_t \sigma$ and that $\phi$, $\psi$ and $\theta$ must satisfy equations \eqref{LQ:riccati_eq}.
\end{proof}

\begin{proof}[Proof of Proposition \ref{MKV:prop_mkv_solution}]
We follow the Pontryagin maximum principle approach for MFC problems of \cite[Chapter 6]{librone_vol1}.
Let $\mathcal{H}$ be the Hamiltonian of the system:
\begin{equation*}
\begin{aligned}
    \mathcal{H} (t,x,y & ,m,\alpha) = \innprod{A x + B \alpha,y} - \frac{1}{2}\innprod{\Bar{Q} m, m} +\innprod{L,m} - \left( \frac{1}{2}\innprod{Q x, x} + \innprod{\Tilde{Q}m + q,x} \right. \\
    & \left. + \innprod{Sx,\alpha} + \frac{1}{2}\innprod{R\alpha,\alpha} + \innprod{r,\alpha} \right), \quad (t,x,y,m,\alpha) \in [0,T]\times\R^d\times\R^d\times\R^d\times\R^k.
\end{aligned}
\end{equation*}
Then, a control $\hat{\alpha}=(\hat{\alpha}_t)_{t \in [0,T]}$ is optimal if and only if there exists a 4-tuple $(X,\hat{\alpha},Y,Z)$ which satisfies
\begin{equation}\label{MFC:FBSDE}
\left\{ \begin{aligned}
    & dX_t  = (A_t X_t + B_t \hat{\alpha}_t)dt +\sigma dW_t, \\
    & X_0 = \xi, \\
    & dY_t  = - \tonde{A_t ^\top Y_t + Q_t X_t + q_t + S_t ^\top \hat{\alpha}_t + (\Bar{Q}_t +2\Tilde{Q}_t )\E[X_t] - L_t }dt  + Z_t dW_t, \\
    & Y_T = H X_t + (\Bar{H} + 2\Tilde{H})\E[X_T], \\
    & B_t ^\top Y_t + S_t X_t + R_t \hat{\alpha}_t  + r_t = 0.
\end{aligned} \right.
\end{equation}
Set $x_t = \E[X_t]$, $y_t = \E[Y_t]$ and $a_t = \E[\hat{\alpha}_t]$.
Then, by taking expectation, we get the following system
\begin{equation}\label{MFC:FBODE}
\left\{ \begin{aligned}
    & \dot{x}_t = A_t x_t + B_t a_t , && x_0 = \E[\xi], \\
    & \dot{y}_t = - \tonde{A_t ^\top y_t + (Q_t + 2\Tilde{Q}_t + \Bar{Q}_t) x_t + q_t -L_t + S_t ^\top a_t} , && y_T = (H + 2\Tilde{H} + \Bar{H} ) x_T , \\
    & B_t ^\top y_t + S_t x_t + R_t a_t + r_t = 0.
\end{aligned} \right.
\end{equation}
To find a solution, we make the following ansatz on $y$:
\begin{equation*}
    y_t = \phi^{MFC}_t x_t + \theta^{MFC}_t,
\end{equation*}
with $\phi^{MFC}$ and $\theta^{MFC}$ suitable deterministic functions taking values in $\R^{d \times d}$ and $\R^d$ respectively.
Since $R$ is invertible for every time $t$ by assumption \ref{LQ:assumptions}(4), by comparing the differential of the ansatz with \eqref{MFC:FBODE}, we get to equations \eqref{MFC:mean_riccati}.
By \cite[Chapter 6, Theorem 7.2]{yong_zhou_stochastic_controls} there exists a unique solution for the matrix Riccati equation for $\phi^{MFC} \in \mathcal{C}^1([0,T],\mathcal{S}^d)$.
We note that the flow of expectations $\bar{x}^{MFC}=(\bar{x}^{MFC}_t)_{t \in [0,T]}$ satisfies the differential equation \eqref{MFC:solution:flow_of_moments}.

\medskip
To prove the existence of a solution to the forward backward system \eqref{MFC:FBSDE}, we can make the ansatz
\begin{equation*}
    Y_t = \hat{\phi}_t X_t + \bar{\psi}_t \bar{x}^{MFC}_t + \bar{\theta}_t.
\end{equation*}
with $\hat{\phi}$, $\bar{\psi}$ and $\bar{\theta}$ deterministic functions taking values in $\R^{d \times d}$, $\R^{d \times d}$ and $\R^d$ respectively. By differentiating the ansatz, comparing it with \eqref{MFC:FBSDE} and using the invertibility of $R$ for any time $t$, we find that $Z_t = \hat{\phi}_t \sigma$ and that $\hat{\phi}$ satisfies the same equation as $\phi$, so that $\hat{\phi} = \phi$, and equations \eqref{MFC:riccati_control}  must be satisfied by $\bar{\psi}$ and $\bar{\theta}$.
\end{proof}

\begin{proof}[Proof of Proposition \ref{LQ:MFG:prop_mfg_solution}]
We follow the Pontryagin maximum principle approach together with the fixed point argument of \cite[Chapter 4]{librone_vol1}.
Let $\mathcal{H}$ be the Hamiltonian of the system:
\begin{equation*}
\begin{aligned}
    \mathcal{H} (t,x,y & ,m,\alpha) = \innprod{A x + B \alpha,y} - \frac{1}{2}\innprod{\Bar{Q} m, m} +\innprod{L,m} - \left( \frac{1}{2}\innprod{Q x, x} + \innprod{\Tilde{Q}m + q,x} \right. \\
    & \left. + \innprod{Sx,\alpha} + \frac{1}{2}\innprod{R\alpha,\alpha} + \innprod{r,\alpha} \right), \quad (t,x,y,m,\alpha) \in [0,T]\times\R^d\times\R^d\times\R^d\times\R^k.
\end{aligned}
\end{equation*}
Then, a control $\hat{\alpha}=(\hat{\alpha}_t)_{t \in [0,T]}$ is optimal if and only if there exists a 4-tuple $(X,\hat{\alpha},Y,Z)$ which satisfies
\begin{equation}\label{MFG:FBSDE}
\left\{ \begin{aligned}
    & dX_t = (A X_t + B \hat{\alpha}_t)dt +\sigma dW_t, && X_0 = \xi, \\
    & dY_t = - \tonde{A^\top Y_t + Q X_t + \Tilde{Q}\hat{m}_t + q + S^\top \hat{\alpha}_t }dt + Z_t dW_t, && Y_T = H X_t + \Tilde{H}\hat{m}_T, \\
    & B^\top Y_t + S X_t + R \hat{\alpha}_t  + r_t = 0.
\end{aligned} \right.
\end{equation}
Set $x_t = \E[X_t]$, $y_t = \E[Y_t]$ and $a_t = \E[\hat{\alpha}_t]$.
Then, the consistency condition $\E[X_t]=\hat{m}_t$ for every $0 \leq t \leq T$ holds if and only if the following system
\begin{equation}\label{MFG:FBODE}
\left\{ \begin{aligned}
    & \dot{x}_t = A x_t + B a_t , && x_0 = \E[\xi], \\
    & \dot{y}_t = - \tonde{A^\top y_t + (Q + \Tilde{Q}) x_t + q + S^\top a_t} , && y_T = (H + \Tilde{H} ) x_T , \\
    & B^\top y_t + S x_t + R a_t + r_t = 0,
\end{aligned} \right.
\end{equation}
admits a unique solution.
We make the following ansatz on $y$:
\begin{equation*}
    y_t = \phi^{NE}_t x_t + \theta^{NE}_t,
\end{equation*}
with $\phi^{NE}$ and $\theta^{NE}$ suitable deterministic functions taking values in $\R^{d \times d}$ and $\R^d$ respectively.
Since $R$ is invertible for every time $t$ by assumption \ref{LQ:assumptions}(4), by comparing the differential of the ansatz with \eqref{MFG:FBODE}, we get to equations \eqref{MFG:fixed_point_riccati}.
We note that the flow of moments $\hat{m}=(\hat{m}_t)_{t \in [0,T]}$ satisfies the differential equation \eqref{MFG:solution:flow_of_moments}.

\medskip
The last step it to prove the existence of a solution to the forward backward system \eqref{MFG:FBSDE}.
We make the ansatz
\begin{equation*}
    Y_t = \hat{\phi}_t X_t + \hat{\psi}_t \hat{m}_t + \theta^{\hat{m}}_t.
\end{equation*}
with $\hat{\phi}$, $\hat{\psi}$ and $\theta^{\hat{m}}$ deterministic functions taking values in $\R^{d \times d}$, $\R^{d \times d}$ and $\R^d$ respectively.
By the same reasoning of Proposition \ref{LQ_general:prop_best_dev}, we find that $Z_t = \hat{\phi}_t \sigma$, that $\hat{\phi}$ and $\hat{\psi}$ satisfy the same equations as $\phi$ and $\phi$, so that $\hat{\phi} = \phi$ and $\hat{\psi}=\psi$, and that equation \eqref{MFG:riccati_control} must be satisfied by $\theta^{\hat{m}}$.
\end{proof}

\begin{proof}[Proof of Theorem \ref{LQ_general:cce_mfg_comparison}]
By using $(\phi,\psi,\theta^{\hat{m}})$, we write the dynamics of the state process $X^{NE}$ as
\begin{align*}
    dX^{NE}_t & = \tonde{(A_t - B_tR_t^{-1}(B^\top \phi_t + S))X^{NE}_t -B_tR_t^{-1}( B^\top \psi_t \hat{m}_t +B^\top\theta^{\hat{m}}_t + r_t)}dt +\sigma_t dW_t \\
    & = \tonde{(A_t - B_t\Phi_t )X^{NE}_t - B_t( \Psi_t\hat{m}_t + \Theta_t ^{\hat{m}})}dt +\sigma_t dW_t,
\end{align*}
with $\Phi$ and $\Psi$ defined by \eqref{LQ:auxiliary_coefficients} and $\Theta^{\hat{m}}$ by \eqref{LQ_general:cce_mfg:auxiliary_theta}.
We remark that $\theta^{\hat{m}}$ and thus $\Theta^{\hat{m}}$ depend on $\hat{m}$ through its time derivative $\sfrac{d\hat{m}}{dt}$.
Let $f^{\hat{m},\mu}=(f^{\hat{m},\mu}_t)_{t \in [0,T]}$ be the solution of 
\begin{equation*}
\dot{f}^{\hat{m},\mu}_t = (A_t - B_t\Phi_t)f^{\hat{m},\mu}_t +B_t( \Psi_t(\hat{m}_t - \mu_t ) +\Theta^{\hat{m}}_t - \delta_t), \quad f^{\hat{m},\mu}_0 = 0.
\end{equation*}
By It\^o's formula, we have that
\begin{equation*}
    f^{\hat{m},\mu}_t = X_t - X^{NE}_t.
\end{equation*}
Since $f(\mu)$ is $\sigma(\mu)$-measurable and $X^{NE}$ is $\mathbb{F}^1$-progressively measurable, we have both that $X^{NE}$ and $f^{\hat{m},\mu}$ are independent and that
\begin{equation}\label{LQ_general:cce_mfg:equality_f}
    f^{\hat{m},\mu}_t =\E[f^{\hat{m},\mu}_t \vert \mu ] =\E[ X_t - X^{NE}_t \vert \mu ] = \mu_t-\hat{m}_t,
\end{equation}
by consistency condition.
Since it holds
\begin{align*}
\J & (\lambda,\mu) - \J(\hat{\alpha},\hat{m}) = \int_0^T \tonde{ \frac{1}{2}\innprod{\Bar{Q} \hat{m}_t, \hat{m}_t} - \frac{1}{2}\E[\innprod{ \Bar{Q} \mu_t, \mu_t  }] + \innprod{L_t, \E[\mu_t] - \hat{m}_t } }dt \\
& +\frac{1}{2}\innprod{\Bar{H}\hat{m}_T,\hat{m}_T} -\frac{1}{2}\E[\innprod{\Bar{H}\mu_T,\mu_T}] + \J' (\hat{\alpha},\hat{m}) -\J'(\lambda,\mu),
\end{align*}
we focus on the difference $\J'(\hat{\alpha},\hat{m})  -\J'(\lambda,\mu)$.
In a very similar way as in the proof of Theorem \ref{LQ:prop:optimality_condition} we obtain
\begin{align*}
\J'& (\hat{\alpha},\hat{m}) - \J' (\lambda,\mu) = \int_0^T \Big( \frac{1}{2} (\innprod{M_t \hat{m}_t,\hat{m}_t} - \E[\innprod{M_t (\hat{m}_t + f^{\hat{m},\mu}_t),\hat{m}_t + f^{\hat{m},\mu}_t}]) \\
& + \frac{1}{2}(\innprod{G_t \hat{m}_t,\hat{m}_t} - \E[\innprod{G_t \mu_t,\mu_t}]) + \innprod{N_t \hat{m}_t, \hat{m}_t - \E[\mu_t] } - \E[\innprod{N_t f^{\hat{m},\mu}_t, \mu_t }] \\
& - \innprod{q_t -\Phi_t^\top r_t, \E[f^{\hat{m},\mu}_t]}  -\innprod{\Psi_t^\top r_t,  \hat{m}_t - \E[\mu_t] } + \innprod{(R_t\Phi_t - S_t )\hat{m}_t, (\Theta^{\hat{m}}_t - R_t^{-1}\E[\delta_t] ) }  \\
& - \E[\innprod{(R_t\Phi_t - S_t )f^{\hat{m},\mu}_t, R_t ^{-1}\delta_t }] - \E[\innprod{R_t\Psi_t \mu_t, R_t ^{-1}\delta_t }]  \\
& + \innprod{R_t\Psi_t\hat{m}_t, \Theta^{\hat{m}}_t }  + \frac{1}{2}(\innprod{R_t\Theta^{\hat{m}}_t,\Theta^{\hat{m}}_t } - \E[\innprod{R^{-1}_t\delta_t ,\delta_t }]) - \innprod{r_t, \Theta^{\hat{m}}_t - R_t ^{-1}\E[\delta_t]} \Big) dt \\
&  +\frac{1}{2}\innprod{H\hat{m}_T, \hat{m}_T} - \frac{1}{2}\innprod{H (\hat{m}_T + \hat{f}_T(\mu)), \hat{m}_T + \hat{f}_T(\mu) } + \innprod{\Tilde{H}\hat{m}_T, \hat{m}_T - \E[\mu_T]} - \E[\innprod{\Tilde{H} \hat{f}_T(\mu), \mu_T }]. 
\end{align*}
Finally, we observe that, by using \eqref{LQ_general:cce_mfg:equality_f}, we have 
\begin{align*}
& \innprod{M_t \hat{m}_t,\hat{m}_t} - \E[\innprod{M_t (\hat{m}_t + f^{\hat{m},\mu}_t),\hat{m}_t + f^{\hat{m},\mu}_t}] = \innprod{M_t \hat{m}_t,\hat{m}_t} - \E[\innprod{M_t \mu_t,\mu_t}], \\
& \innprod{N_t \hat{m}_t, \hat{m}_t - \E[\mu_t] } - \E[\innprod{N_t f^{\hat{m},\mu}_t, \mu_t }] = \innprod{N_t \hat{m}_t, \hat{m}_t } - \E[\innprod{N_t \mu_t, \mu_t }], \\
& \innprod{(R_t\Phi_t - S_t)\hat{m}_t, (\Theta^{\hat{m}}_t - R_t^{-1}\E[\delta_t] ) }  - \E[\innprod{(R_t\Phi_t - S_t)f^{\hat{m},\mu}_t, R_t ^{-1}\delta_t }] \\
& =  \innprod{(R_t\Phi_t - S_t)\hat{m}_t, \Theta^{\hat{m}}_t  }  - \E[\innprod{(R_t\Phi_t - S_t)\mu_t, R_t ^{-1}\delta_t }], \\
& \innprod{q_t -\Phi_t^\top r_t, \E[f^{\hat{m},\mu}_t]}  +\innprod{\Psi_t^\top r_t,  \hat{m}_t - \E[\mu_t] } = \innprod{q_t -(\Phi_t+\Psi_t)^\top r_t,  \E[\mu_t] -\hat{m}_t }.
\end{align*}
By using these identities together with \eqref{LQ_general:optimality:final_identities}, we get to \eqref{LQ:cce_mfg_outperformance:final}.
\end{proof}

\addcontentsline{toc}{section}{References}

\bibliographystyle{abbrv}
\bibliography{biblio}

\end{document}